\documentclass[12pt]{amsart}
\usepackage[latin9]{inputenc}
\usepackage{geometry}
\geometry{verbose,tmargin=2.4cm,bmargin=2cm,lmargin=2.5cm,rmargin=3cm,headsep=1.2cm}
\usepackage{units}
\usepackage{mathrsfs}
\usepackage{enumitem}
\usepackage{amsbsy}
\usepackage{amstext}
\usepackage{amsthm}
\usepackage{amssymb}
\usepackage{graphicx}
\usepackage{breakurl}

\makeatletter
\numberwithin{equation}{section}
\numberwithin{figure}{section}
\theoremstyle{plain}
\newtheorem{thm}{\protect\theoremname}[section]
  \theoremstyle{definition}
  \newtheorem{defn}[thm]{\protect\definitionname}
  \theoremstyle{plain}
  \newtheorem{lem}[thm]{\protect\lemmaname}
  \theoremstyle{plain}
  \newtheorem{prop}[thm]{\protect\propositionname}
  \theoremstyle{remark}
  \newtheorem*{rem*}{\protect\remarkname}
  \theoremstyle{plain}
  \newtheorem{cor}[thm]{\protect\corollaryname}
  \theoremstyle{plain}
  \newtheorem{conjecture}[thm]{\protect\conjecturename}

\@ifundefined{date}{}{\date{}}

\usepackage{amsthm}
\usepackage{amscd}
\usepackage{comment}
\usepackage{mathrsfs}
\usepackage{setspace}
\usepackage{color}

\makeatother

  \providecommand{\conjecturename}{Conjecture}
  \providecommand{\corollaryname}{Corollary}
  \providecommand{\definitionname}{Definition}
  \providecommand{\lemmaname}{Lemma}
  \providecommand{\propositionname}{Proposition}
  \providecommand{\remarkname}{Remark}
\providecommand{\theoremname}{Theorem}

\begin{document}
\global\long\def\at{\left.\right|_{\Omega}}

\global\long\def\disgraph{\mathcal{G}}

\global\long\def\metgraph{\Gamma}

\global\long\def\ui{\mathbf{\textrm{i}}}

\global\long\def\ue{\mathbf{\textrm{e}}}

\global\long\def\ud{\mathbf{\textrm{d}}}

\global\long\def\d{\partial}

\global\long\def\E{\mathcal{E}}

\global\long\def\V{\mathcal{V}}

\global\long\def\Vint{\mathcal{V}_{\mathrm{int}}}

\global\long\def\stardom{\Omega_{a,b}^{\mathrm{star}}}

\global\long\def\lensdom{\Omega_{a,b}^{\mathrm{lens}}}

\global\long\def\lapstar{\Delta_{a,b}}

\global\long\def\la{\lambda}

\global\long\def\Cr{\mathscr{C}\left(f\right)}

\global\long\def\Min{\mathscr{M}_{-}\left(f\right)}

\global\long\def\Max{\mathscr{M}_{+}\left(f\right)}

\global\long\def\Sd{\mathscr{S}\left(f\right)}

\global\long\def\Xt{\mathscr{X}\left(f\right)}

\global\long\def\Nd{\mathcal{N}\left(f\right)}

\global\long\def\hess{\mathrm{Hess}}

\global\long\def\H{H}

\global\long\def\Z{\mathbb{Z}}

\global\long\def\R{\mathbb{R}}

\global\long\def\C{\mathbb{C}}

\global\long\def\N{\mathbb{N}}

\global\long\def\Q{\mathbb{Q}}

\global\long\def\msing{\Sigma^{\mathrm{sing}}}

\global\long\def\mreg{\Sigma^{\mathrm{reg}}}

\global\long\def\mgen{\Sigma^{\mathrm{gen}}}

\global\long\def\mg{\Sigma^{\mathrm{g}}}

\global\long\def\lap{\Delta}

\global\long\def\na{\nabla}

\global\long\def\bs#1{\boldsymbol{#1}}

\global\long\def\deg#1{\mathrm{deg}(#1)}

\global\long\def\neig{\nu\left(\phi_{\eig}\right)}

\global\long\def\eig{\lambda}

\global\long\def\T{\mathbb{T}}

\global\long\def\torus{\mathbb{T^{\left|\E\right|}}}

\global\long\def\meas{\mu_{\vec{L}}}

\global\long\def\dens{d_{\vec{L}}}

\global\long\def\sgn{\mathrm{sgn}}
\global\long\def\undercom#1#2{\underset{_{#2}}{\underbrace{#1}}}

\title{Neumann Domains on Graphs and Manifolds}

\author{Lior Alon, Ram Band, Michael Bersudsky, Sebastian Egger}

\address{{\small{}Department of Mathematics, Technion--Israel Institute of
Technology, Haifa 32000, Israel}}

\subjclass[2000]{35Pxx, 57M20, 34B45, 81Q35}
\begin{abstract}
The nodal set of a Laplacian eigenfunction forms a partition of the
underlying manifold or graph. Another natural partition is based on
the gradient vector field of the eigenfunction (on a manifold) or
on the extremal points of the eigenfunction (on a graph). The submanifolds
(or subgraphs) of this partition are called Neumann domains. This
paper reviews the subject, as appears in \cite{AloBan_Neumann,BanEggTay_arXiv17,BanFaj_ahp16,McDFul_ptrs13,Zel_sdg13}
and points out some open questions and conjectures. The paper concerns
both manifolds and metric graphs and the exposition allows for a comparison
between the results obtained for each of them.
\end{abstract}

\keywords{Neumann domains, Neumann lines, nodal domains, Laplacian eigenfunctions, Quantum graph, Morse-Smale complexes}
\maketitle

\section{Introduction}

Given a Laplacian eigenfunction on a manifold or a metric graph, there
is a natural partition of the manifold or the graph. The partition
is dictated by the gradient vector field of the eigenfunction (on
a manifold) or by the extremal points of the eigenfunction (on a graph).
The submanifolds (or subgraphs) of such a partition are called Neumann
domains and the separating lines (or points in the case of a graph)
are called Neumann lines (or points). The counterpart of this partition
is the nodal partition (with the same terminology of nodal domains,
nodal lines and nodal points). This latter partition is extensively
studied in the last two decades or so (though interesting results
on nodal domains appeared throughout all of the 20-th century and
even earlier). When restricting an eigenfunction to a single nodal
domain one gets an eigenfunction of that domain with Dirichlet boundary
conditions. Similarly, when restricting an eigenfunction to a Neumann
domain, one gets a Neumann eigenfunction of that domain (Lemmata \ref{lem:Restriction-is-Neumann-eigenfunction-manifold},\ref{lem:Restriction-is-Neumann-eigenfunction-graph}),
which explains the name \emph{Neumann} domain and shows the most basic
linkage between nodal domains and Neumann domains.

Neumann domains form a very new topic of study in spectral geometry.
They were first mentioned in a paragraph of a manuscript by Zelditch
\cite{Zel_sdg13}. Shortly afterwards (and independently) a paper
by McDonald and Fulling was dedicated to Neumann domains \cite{McDFul_ptrs13}.
Since then two additional papers contributed to this topic; one of
the authors with Fajman \cite{BanFaj_ahp16} and two of the authors
with Taylor \cite{BanEggTay_arXiv17}. The first part of the current
manuscript serves as an exposition of the known results for Neumann
domains on two-dimensional manifolds, adding a few supplementary new
results and proofs. The second part focuses on Neumann domains on
metric graphs and reviews the results which appear in \cite{AloBan_Neumann}\footnote{While writing this manuscript, we became aware that there is an ongoing
research on the related topic of Neumann partitions on graphs. These
works in progress are done by Gregory Berkolaiko, James Kennedy, Pavel Kurasov,
Corentin L\'ena and Delio Mugnolo.}. We aim to point out similarities and differences between Neumann
domains on manifolds and those on graphs. For this purpose, each of
the two parts of the papers is divided to exactly the same subtopics:
definitions, topology, geometry, spectral position and count. We also
include an appendix which contains a short review of relevant results
in basic Morse theory, useful for the manifold part of the paper.
The summary of the paper provides guidelines for comparison between
the manifold results and the graph results. Such a comparison had
taught us a great deal in what concerns to the field of nodal domains
and yielded a wealth of new results both on manifolds and graphs.
As an example we only mention the topic of nodal partitions and refer
the interested reader to \cite{BanBerRazSmi_cmp12,Ber_apde13,BerKucSmi_gafa12,BerRazSmi_jpa12,BerWey_ptrsa14,BonHel_book17,CdV_apde13,HelHof_jems13,HelHofTer_aihp09}
in order to learn on the evolution of this research direction. In
addition to that, we believe that it is beneficial to compare problems
between the fields of nodal domains and Neumann domains. We point
out such similarities and differences throughout the paper.

Although new in spectral theory, Neumann domains were used in computational
geometry, where they are known as Morse-Smale complexes (see the book
\cite{zomorodian2005topology} or \cite{Biasoti_describing_shapes08}
for an extensive review). They are used as a tool to analyze sets
of measurements on certain spaces and for getting a good qualitative
and quantitative acquaintance with the measured functions \cite{ChaVegYap_jsc17,EdeHarZon_dcg03,EdeHarNatPas_scg03}.
Another field of relevance is computer graphics, where Morse-Smale
complexes of Laplacian eigenfunctions are applied for surface segmentation
\cite{DonBreGarPasHar_sig06,GyuBrePas_ieee12,Reuter_jcp10}.\vspace{2mm}

\part{Neumann domains on two-dimensional manifolds}

\vspace{4mm}

\section{Definitions}

\noindent Let $(M,\thinspace g)$ be a two-dimensional, connected,
orientable and closed Riemannian manifold. We denote by $-\Delta$
the (negative) self-adjoint Laplace-Beltrami operator. Its spectrum
is purely discrete since $M$ is compact. We order the eigenvalues
$\{\lambda_{n}\}_{n=0}^{\infty}$ increasingly, $0=\lambda_{0}<\lambda_{1}\leq\lambda_{2}\leq\ldots$,
and denote a corresponding complete system of orthonormal eigenfunctions
by $\{f_{n}\}_{n=0}^{\infty}$, so that we have
\begin{equation}
-\Delta f_{n}=\lambda_{n}f_{n}.
\end{equation}

\noindent We assume in the following that the eigenfunctions $f$
are Morse functions, i.e.\,have no degenerate critical points\footnote{These are critical points where the determinant of the Hessian vanishes. }.
We call such an $f$ a \emph{Morse-eigenfunction}. Eigenfunctions
are generically Morse, as shown in \cite{Albert_thesis72,Uhl_ajm76}.
At this point, we refer the interested reader to the appendix, where
some basic Morse theory which is relevant to the paper is presented.

\noindent In order to define Neumann domains and Neumann lines we
introduce the following construction based on the gradient vector
field, $\nabla f$. This vector field defines the following flow:
\begin{equation}
\begin{aligned} & \varphi:\mathbb{R}\times\,M\rightarrow M,\\
 & \partial_{t}\varphi(t,\,\bs x)=-\nabla f\big|_{\varphi(t,\,\bs x)},\\
 & \varphi(0,\,\bs x)=\bs x.
\end{aligned}
\label{eq:flow}
\end{equation}

\noindent The following notations are used throughout the paper. The
set of critical points of $f$ is denoted by $\Cr$; the sets of saddle
points and extrema of $f$ are denoted by $\Sd$ and $\Xt$; the sets
of minima and maxima of $f$ are denoted by $\Min$ and $\Max$, respectively.

\noindent For a critical point $\bs x\in\Cr$, we define its stable
and unstable manifolds by
\begin{equation}
\begin{aligned}W^{s}(\bs x) & =\{\bs y\in M\big|\lim_{t\rightarrow\infty}\varphi(t,\,\bs y)=\bs x\}\mbox{ and }\\
W^{u}(\bs x) & =\{\bs y\in M\big|\lim_{t\rightarrow-\infty}\varphi(t,\,\bs y)=\bs x\},
\end{aligned}
\label{eq:Stable-Unstable-Def}
\end{equation}
respectively. Intuitively, these notions may be visualized in terms
of surface topography; the stable manifold, $W^{s}(\bs x)$, may be
thought of as a dale (where falling rain droplets would flow and reach
$\bs x$) and the unstable manifold, $W^{u}(\bs x)$, as a hill (with
opposite meaning in terms of water flow). An interesting scientific
account on those appeared by Maxwell already in 1870 \cite{Maxwell_1870}.
\begin{defn}
\noindent \cite{BanFaj_ahp16}\label{def:Neumann-Domains-and-Lines}
Let $f$ be a Morse function.
\begin{enumerate}
\item Let $\bs p\in\Min,\,\,\bs q\in\Max$, such that $W^{s}\left(\bs p\right)\cap W^{u}\left(\bs q\right)\neq\emptyset$.
Each of the connected components of $W^{s}\left(\bs p\right)\cap W^{u}\left(\bs q\right)$
is called a \emph{Neumann domain} of $f$.
\item The \emph{Neumann line set} of $f$ is
\begin{equation}
\Nd:=\overline{\bigcup_{{\bs r\in\Sd}}W^{s}(\bs r)\cup W^{u}(\bs r)}.\label{eq:Neumann-line-set}
\end{equation}
\end{enumerate}
\end{defn}
Note that the definition above may be applied to any Morse function
and not necessarily to eigenfunctions. Indeed, some of the results
to follow do not depend on $f$ being an eigenfunction. Yet, the spectral
theoretic point of view is the one which motivates us to consider
the particular case of Laplacian eigenfunctions.\\
It is not hard to see from basic Morse theory that Neumann domains
are two-dimensional subsets of $M$, whereas the Neumann line set
is a union of one dimensional curves on $M$ (see appendix). Further
properties of Neumann domains and Neumann lines are described in the
next section.

Figure \ref{fig:Neumann-Domains-Torus-Basic} shows an eigenfunction
of the flat torus with its partition to Neumann domains.
\begin{figure}[h]
\centering{}\includegraphics[width=1\textwidth]{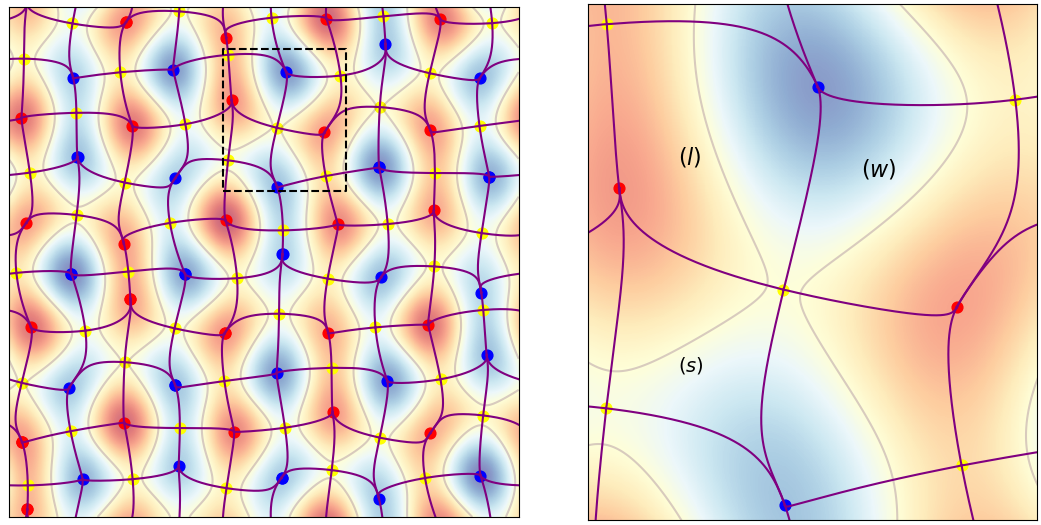} \caption{Left: An eigenfunction corresponding to eigenvalue $\lambda=25$ of
the flat torus whose fundamental domain is $[0,2\pi]\times[0,2\pi]$.
Red (blue) colors indicate positive (negative) values of the eigenfunction.
Red (blue) points mark maximum (minimum) points and yellow points
mark saddle points. The nodal set is drawn in grey and the Neumann
line set in purple. The Neumann domains are the domains bounded by
the Neumann line set.\protect \\
Right: A magnification of the marked square from the left figure.
Three Neumann domains are marked by (s), (l) and (w) according to
the three distinguished Neumann domain types described in Section
\ref{subsec:Angles}.}
\label{fig:Neumann-Domains-Torus-Basic}
\end{figure}

In the above and throughout the paper, we treat only manifolds without
boundary, in order to avoid technicalities and ease the reading. It
is possible to define Neumann domains for manifolds with boundary
and to prove analogous results for those. The interested reader is
referred to \cite{BanFaj_ahp16} for such a treatment.

\section{Topology of $\Omega$ and topography of $\left.f\right|_{\Omega}$\label{sec:Topology-and-Topography-Manifolds}}

Let $f$ be an eigenfunction corresponding to an eigenvalue $\lambda$
and let $\Omega$ be a Neumann domain. The boundary, $\partial\Omega$,
consists of Neumann lines, which are particular gradient flow lines
(see appendix). As the gradient $\nabla f$ is tangential to the Neumann
lines we get that $\left.\hat{n}\cdot\nabla f\right|_{\partial\Omega}=0$,
where $\hat{n}$ is normal to $\partial\Omega$. As a consequence
we have
\begin{lem}
\label{lem:Restriction-is-Neumann-eigenfunction-manifold}$\left.f\right|_{\Omega}$
is a Neumann eigenfunction of $\Omega$ and corresponds to the eigenvalue
$\lambda$.
\end{lem}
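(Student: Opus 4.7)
The plan is to establish two things: the eigenvalue equation $-\Delta(f|_\Omega) = \lambda \, f|_\Omega$ inside $\Omega$, and the Neumann boundary condition $\hat n \cdot \nabla f = 0$ on $\partial \Omega$. The first is immediate, since the Laplace-Beltrami operator is a local differential operator and $\Omega \subset M$ is open; the global identity $-\Delta f = \lambda f$ restricts trivially.

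The substantive content is the boundary condition, and for this the main ingredient is the definition of $\Omega$ as a connected component of $W^s(\bs p) \cap W^u(\bs q)$. Because any connected component of such an intersection is an open set whose boundary is contained in $\Nd = \overline{\bigcup_{\bs r\in\Sd} W^s(\bs r)\cup W^u(\bs r)}$, I would first argue that $\partial\Omega$ decomposes into (i) smooth one-dimensional arcs which are pieces of $W^s(\bs r)$ or $W^u(\bs r)$ for saddles $\bs r\in\Sd$, together with (ii) a finite set of critical points (the saddles that anchor these arcs and, possibly, the extrema $\bs p,\bs q$ in $\overline\Omega$). The appendix presumably supplies that these stable/unstable manifolds of saddles are smooth one-dimensional submanifolds of $M$, which I would cite rather than reprove.

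On the smooth arcs of $\partial\Omega$, the defining ODE $\partial_t\varphi = -\nabla f$ shows that $\nabla f$ is tangent to the arc at every point, so $\hat n\cdot\nabla f = 0$ pointwise there. At the critical points on $\partial\Omega$ one has $\nabla f = 0$, so the Neumann condition is trivially satisfied pointwise; these form an isolated, hence removable, set.

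The main obstacle, and where I would take the most care, is that $\partial\Omega$ is not globally smooth: saddle points are corners where two gradient-flow arcs meet, and a priori ``Neumann eigenfunction of $\Omega$'' needs to be interpreted in a weak sense on this corner domain. I would therefore state the lemma in the variational formulation: $f|_\Omega \in H^1(\Omega)$ satisfies
\begin{equation}
\int_\Omega \nabla f \cdot \nabla \phi \, dV = \lambda \int_\Omega f \phi \, dV \qquad \text{for all } \phi\in H^1(\Omega).
\label{eq:weakNeumann}
\end{equation}
To establish \eqref{eq:weakNeumann}, I would extend $\phi$ (by a cutoff and a suitable extension, or by exhausting $\Omega$ by smooth subdomains whose boundaries avoid a shrinking neighborhood of the finite critical set) to an $H^1$ function on $M$, apply Green's identity on the approximating smooth subdomains, and pass to the limit. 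The boundary integrals vanish in the limit because $\nabla f$ is tangent to the smooth portions of $\partial\Omega$ and the excised neighborhoods of the critical points have vanishing one-dimensional measure. This yields \eqref{eq:weakNeumann} and hence the claim.
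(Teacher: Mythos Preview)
Your approach is essentially the same as the paper's: the paper's entire argument is the short paragraph preceding the lemma, which observes that $\partial\Omega$ consists of gradient flow lines, that $\nabla f$ is tangent to these, and hence $\hat n\cdot\nabla f|_{\partial\Omega}=0$. Your additional care about the corner points and the weak formulation goes beyond what the paper provides; the paper treats the lemma as an immediate consequence of the tangency observation and does not address the regularity issues you flag.
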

This lemma is the reason for the name \emph{Neumann} domains.\\

Next, we describe the topological properties of a Neumann domain $\Omega$,
as well as the topography of $\left.f\right|_{\Omega}$. By topography
of a function, we mean the information on its level sets and critical
points.
\begin{thm}
\label{thm:topological-properties-manifolds}\cite[Theorem 1.4]{BanFaj_ahp16}

Let $f$ be a Morse function with a non-empty set of saddle points,
$\Sd\neq\emptyset$.

Let $\bs p\in\Min,\,\bs q\in\Max$ with $W^{s}\left(\bs p\right)\cap W^{u}\left(\bs q\right)\neq\emptyset$.

Let $\Omega$ be a connected component of $W^{s}\left(\bs p\right)\cap W^{u}\left(\bs q\right)$,
i.e., $\Omega$ is a Neumann domain.

The following properties hold.
\begin{enumerate}
\item The Neumann domain $\Omega$ is a simply connected open set.\label{enu:thm-topological-properties-manifolds-1}
\item All critical points of $f$ belong to the Neumann line set, i.e.,
$\Cr\subset\Nd$.\label{enu:thm-topological-properties-manifolds-2}
\item The extremal points which belong to $\overline{\Omega}$ are exactly
$\bs p,\bs q$, i.e., $\Xt\cap\partial\Omega=\{\bs p,\thinspace\bs q\}$.\label{enu:thm-topological-properties-manifolds-3}
\item If $f$ is a Morse-Smale function\footnote{See appendix for the definition of a Morse-Smale function.}
then $\partial\Omega$ consists of Neumann lines connecting saddle
points with $\bs p$ or $\bs q$. In particular, $\partial\Omega$
contains either one or two saddle points (see also Proposition \ref{prop:Morse-Smale-on-2d}).
\label{enu:thm-topological-properties-manifolds-4}
\item Let $c\in\R$. such that $f(\bs p)<c<f(\bs q)$. $\overline{\Omega}\cap f^{-1}\left(c\right)$
is a smooth, non-self intersecting one-dimensional curve in $\overline{\Omega}$,
with boundary points lying on $\partial\Omega$.\label{enu:thm-topological-properties-manifolds-5}
\end{enumerate}
\end{thm}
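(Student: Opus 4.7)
The plan is to address the five items in the order (2)--(3)--(5),(1)--(4), since later parts chain onto earlier ones. I first record the standard flow facts used throughout: $W^{s}(\bs p)$ and $W^{u}(\bs q)$ are open in $M$, because at a hyperbolic sink/source in dimension two the stable/unstable manifold has full dimension, so $\Omega$ is open; every trajectory in $\Omega$ converges to $\bs q$ as $t\to-\infty$ and to $\bs p$ as $t\to+\infty$; and $\frac{d}{dt}f(\varphi(t,\bs y))=-\|\nabla f\|^{2}<0$ off critical points, so $f$ strictly decreases from $f(\bs q)$ to $f(\bs p)$ along such a trajectory.

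For (2), saddles lie trivially in $\Nd$. For an extremum, say $\bs p'\in\Min$, I argue by contradiction: if $\bs p'\notin\Nd$, then no branch of any $W^{u}(\bs r)$ terminates at $\bs p'$, so every backward trajectory starting in $W^{s}(\bs p')\setminus\{\bs p'\}$ limits at a maximum. Hence $W^{s}(\bs p')\setminus\{\bs p'\}\subset\bigcup_{\bs q'\in\Max}W^{u}(\bs q')$. The left-hand side is a connected open $2$-set with a single point removed, while the right is a disjoint union of open sets, so a single $\bs q'$ must cover everything and $\bs p'\in\overline{W^{u}(\bs q')}$. A parallel boundary analysis yields $\partial W^{u}(\bs q')\subset\bigcup_{\bs r\in\Sd}W^{u}(\bs r)$, which would force $\bs p'$ to coincide with a saddle---a contradiction, and one for which the hypothesis $\Sd\neq\emptyset$ is essential. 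Claim (3) then follows by disjointness of basins: any minimum $\bs p'\neq\bs p$ lies in the open basin $W^{s}(\bs p')$ which is disjoint from $W^{s}(\bs p)$, hence cannot lie in $\overline{\Omega}\subset\overline{W^{s}(\bs p)}$, and symmetrically for maxima; meanwhile $\bs p,\bs q\in\partial\Omega$ as limits of trajectories in $\Omega$ to which they themselves do not belong.

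For (5) and (1) I would exploit that $\Omega$ contains no critical points: saddles lie in $\Nd\subset M\setminus\Omega$ (by definition of $\Nd$ and openness of $\Omega$), and extrema of $\overline{\Omega}$ reduce to $\bs p,\bs q\in\partial\Omega$ by (3). Thus every $c\in(f(\bs p),f(\bs q))$ is a regular value of $f|_{\Omega}$, the level set $L_{c}:=\Omega\cap f^{-1}(c)$ is a smooth $1$-submanifold, and the flow is transverse to it. Each trajectory in $\Omega$ meets $L_{c}$ in exactly one point, so $(\bs y,t)\mapsto\varphi(t,\bs y)$ is a diffeomorphism $L_{c}\times\R\to\Omega$. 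Connectedness of $\Omega$ forces connectedness of $L_{c}$, and a circle component would make $\Omega\cong S^{1}\times\R$ with closure $\overline{\Omega}$ a topological $2$-sphere---incompatible with $\partial\Omega$ being one-dimensional. Hence $L_{c}$ is an open arc, $\Omega\cong\R^{2}$ is simply connected, proving (1), and (5) follows on taking closures: the two endpoints of $L_{c}$ lie on $\partial\Omega\cap f^{-1}(c)$, giving a smooth non-self-intersecting curve $\overline{L_{c}}=\overline{\Omega}\cap f^{-1}(c)$ with boundary on $\partial\Omega$. For (4), the Morse--Smale hypothesis eliminates saddle--saddle connections, so every branch of $W^{u}(\bs r)$ ends at a minimum and every branch of $W^{s}(\bs r)$ emanates from a maximum. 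Therefore $\partial\Omega$ is a piecewise-smooth Jordan curve (by (1)) built from alternating ``stable'' arcs running $\bs q\to\bs r$ and ``unstable'' arcs running $\bs r\to\bs p$; the cyclic pattern then admits either exactly one saddle (the lens configuration, where both pairs meet at the same saddle) or exactly two distinct saddles.

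The main anticipated obstacle is the circle-versus-arc dichotomy for $L_{c}$ in the proof of (1), i.e. ruling out a cylindrical $\Omega$. This step requires genuinely global topological input rather than the local flow-box picture, and I would close it by showing that in the cylindrical case $\overline{\Omega}$ would be an embedded topological $2$-sphere meeting $\Nd$ only at its poles $\bs p,\bs q$, contradicting the fact that the Neumann lines forming $\partial\Omega$ are one-dimensional curves.
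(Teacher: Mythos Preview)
First, note that the paper does \emph{not} give its own proof of this theorem: it is quoted verbatim from \cite[Theorem~1.4]{BanFaj_ahp16} and followed only by commentary, so there is no in-paper argument to compare against. What follows is therefore an assessment of your sketch on its merits.

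Most of the architecture is sound: the flow-box diffeomorphism $L_c\times\R\to\Omega$ is the right engine for (1) and (5), the disjointness-of-basins argument for (3) is correct, and the Morse--Smale reduction in (4) is the standard route. The genuine gap is in your proof of (2). From the hypothesis $\bs p'\notin\Nd$ you correctly deduce $W^{s}(\bs p')\setminus\{\bs p'\}\subset W^{u}(\bs q')$ for a single maximum $\bs q'$, but the next sentence fails: the ``parallel boundary analysis'' does \emph{not} give $\partial W^{u}(\bs q')\subset\bigcup_{\bs r\in\Sd}W^{u}(\bs r)$. A point $y\in\partial W^{u}(\bs q')$ has backward limit some $c\neq\bs q'$; ruling out other maxima is fine (open disjoint basins), and if $c$ is a saddle then $y\in W^{u}(c)$---but if $c$ is a minimum then simply $y=c$, and nothing prevents this. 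In particular $\bs p'$ itself sits in $\partial W^{u}(\bs q')$ precisely as a minimum, so the inclusion you claim is false on its face and there is no contradiction. The parallel you want (showing $W^{u}(\bs q')\setminus\{\bs q'\}\subset W^{s}(\bs p'')$) would require $\bs q'\notin\Nd$, which you have not arranged. One correct way to close (2) is to show that $W^{u}(\bs q')\cup\{\bs p'\}$ is both open and closed in $M$, forcing $M=S^2$ with exactly two critical points and contradicting $\Sd\neq\emptyset$; alternatively one invokes the complementarity statement (Proposition~1.3 of \cite{BanFaj_ahp16}, quoted in the appendix here) that $M$ equals $\Nd$ together with the Neumann domains, after which (2) is immediate since critical points lie in no Neumann domain.

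Two smaller points. In (1), the sentence ``incompatible with $\partial\Omega$ being one-dimensional'' is circular: you have not yet shown $\partial\Omega$ is one-dimensional, and in the putative cylindrical case $\partial\Omega=\{\bs p,\bs q\}$ is zero-dimensional. The clean contradiction is the one you hint at in your final paragraph: $\overline{\Omega}$ would be an embedded $2$-sphere, hence all of $M$, so $\Nd\subset\{\bs p,\bs q\}$ is finite, impossible once $\Sd\neq\emptyset$ since each saddle contributes one-dimensional arcs to $\Nd$. In (4), ``$\partial\Omega$ is a Jordan curve by (1)'' is too quick---simple connectivity of an open planar set does not by itself force a Jordan boundary---but it does follow from your product description $\Omega\cong L_c\times\R$ together with $\partial\Omega\subset\Nd$.
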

This last theorem contains different properties of Neumann domains:
claim (\ref{enu:thm-topological-properties-manifolds-1}) concerns
the topology, claims (\ref{enu:thm-topological-properties-manifolds-2}),(\ref{enu:thm-topological-properties-manifolds-3}),(\ref{enu:thm-topological-properties-manifolds-4})
the critical points, and claim (\ref{enu:thm-topological-properties-manifolds-5})
the level sets. A special emphasize should be made for the case when
$f$ is a Morse function which is also an eigenfunction. For Laplacian
eigenfunctions we have that maxima are positive and minima are negative,
i.e., $f(\bs p)<0,~~f(\bs q)>0$, in the notation of the theorem.
Hence we may choose $c=0$ in claim (\ref{enu:thm-topological-properties-manifolds-5})
above and obtain a characterization of the nodal set which is contained
within a Neumann domain.

\begin{figure}[h]
\centering{}\includegraphics[width=0.4\textwidth]{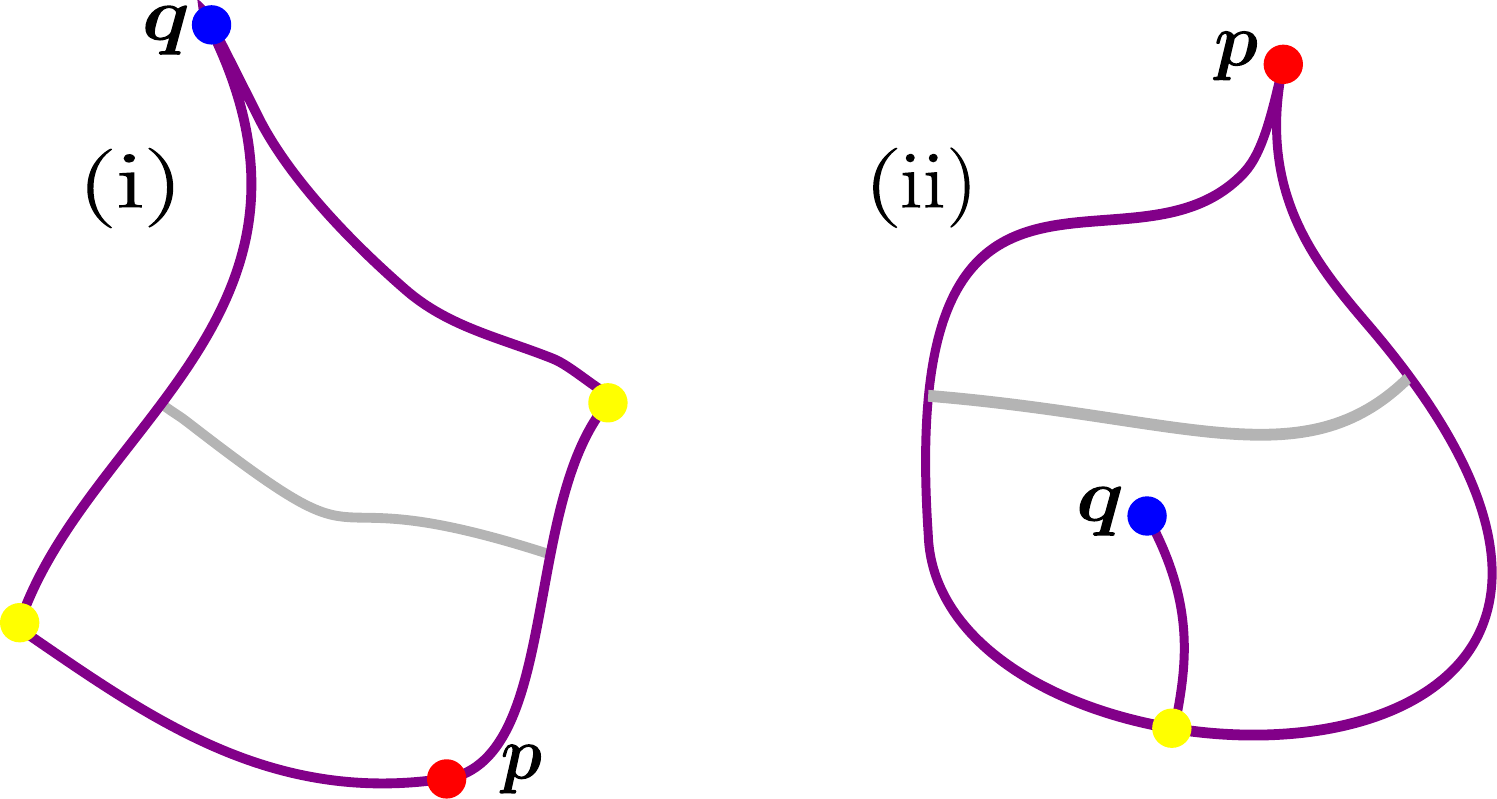} \caption{Two possible types of Neumann domains for a Morse-Smale eigenfunction.
Red (blue) discs mark maximum (minimum) points and yellow discs mark
saddle points. The nodal set is drawn in grey.}
\label{fig:Neumann-domains-schematic}
\end{figure}

Figure \ref{fig:Neumann-domains-schematic} shows the two possible
schematic shapes of Neumann domains of a Morse-Smale eigenfunction,
as implied from the properties above. We complement the figure by
noting that there exist Morse functions with Neumann domains of type
(ii) but numerical explorations have not revealed any eigenfunction
with a Neumann domain of this type.

Let us compare the results above with similar properties of nodal
domains. Nodal domains are not necessarily simply connected. On the
contrary, it was recently found that random eigenfunctions may have
nodal domains of arbitrarily high genus \cite{SarWig_cm16}. Also,
there in no upper bound on the number of critical points in a nodal
domain. A particular nodal domain may have either minima or maxima
(but not both) in its interior and saddle points both in its interior
or at its boundary.

\section{Geometry of $\Omega$}

\subsection{\label{subsec:Angles}Angles}

The angles between Neumann lines meeting at critical points are discussed
in \cite{McDFul_ptrs13}. The first two parts of the next proposition
summarize the content of theorems 3.1 and 3.2 in \cite{McDFul_ptrs13}
and further generalize their result from the Euclidean case to an
arbitrary smooth metric. The third part of the proposition is new
and concern the angles between Neumann lines and nodal lines. The
proof of the first two parts is almost the same as the one in \cite{McDFul_ptrs13}
and we bring it here for completeness.
\begin{prop}
\label{prop:angles-at-critical-pts} Let $f$ be a Morse function
on a two dimensional manifold with a smooth Riemannian metric $g$.
\begin{enumerate}
\item Let $\boldsymbol{c}$ be a saddle point of $f$. Then there are exactly
four Neumann lines meeting at $\boldsymbol{c}$ with angles $\nicefrac{\pi}{2}$.
\item Let $\boldsymbol{c}$ be an extremal point of $f$ whose Hessian is
not proportional to $g$. Then any two Neumann lines meet at $\boldsymbol{c}$
with either angle $0$, $\pi$, or $\nicefrac{\pi}{2}$.
\item Further assume that $f$ is a Morse eigenfunction.\\
Let $\bs c$ be an intersection point of a nodal line and a Neumann
line of $f$.\\
If $\bs c$ is a saddle point then the angle between those lines is
$\nicefrac{\pi}{4}$.\\
Otherwise, this angle is $\nicefrac{\pi}{2}$.
\end{enumerate}
\end{prop}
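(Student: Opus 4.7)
The plan is to reduce each claim to a linearised computation in Riemannian normal coordinates centred at the critical point $\boldsymbol{c}$, so that $g_{ij}(\boldsymbol{c}) = \delta_{ij}$ and $g$-angles at $\boldsymbol{c}$ agree with Euclidean angles. In these coordinates the Hessian $H$ of $f$ at $\boldsymbol{c}$ is the ordinary matrix of second partials, and since $\nabla f(\boldsymbol{c}) = 0$ a direct calculation shows that the linearisation of the flow $-\nabla f$ at $\boldsymbol{c}$ equals $-H$. Because $H$ is symmetric, its eigenvectors are $g$-orthogonal, so the entire problem reduces to identifying which eigendirections of $H$ can be tangent to a Neumann line arriving at $\boldsymbol{c}$.

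For part (1), at a saddle $H$ has one positive and one negative eigenvalue. The stable manifold theorem gives that $W^{s}(\boldsymbol{c})$ and $W^{u}(\boldsymbol{c})$ are one-dimensional smooth curves tangent at $\boldsymbol{c}$ to the eigenvectors of $H$ associated with the positive and negative eigenvalues respectively. Each contributes two branches of Neumann line, so the four Neumann lines at $\boldsymbol{c}$ are pairwise either aligned or $g$-orthogonal, yielding the $\nicefrac{\pi}{2}$ claim. For part (2), at an extremum $\boldsymbol{c}$ the linearisation $-H$ is a hyperbolic sink or source, and the hypothesis that $H$ is not proportional to $g$ forces its two eigenvalues to be distinct. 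A standard application of invariant-manifold/linearisation results (or an explicit argument in coordinates) then shows that every non-constant trajectory converging to $\boldsymbol{c}$ is tangent at $\boldsymbol{c}$ to one of the two eigendirections: there is a one-dimensional fast invariant curve carrying the two exceptional trajectories, while all remaining trajectories approach tangent to the slow eigenvector. Orthogonality of the eigendirections then restricts any pairwise angle to $0$, $\nicefrac{\pi}{2}$, or $\pi$. This is the step I expect to be the main technical obstacle, since one must rule out the possibility of trajectories spiralling in or approaching along non-eigen directions; the fact that $-H$ is diagonalisable with distinct real eigenvalues is what makes the argument go through.

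For part (3), I would first establish the auxiliary fact that an extremum of a Morse Laplacian eigenfunction cannot lie on the nodal set: at such a point one has $\mathrm{tr}(H) = \Delta f(\boldsymbol{c}) = -\lambda f(\boldsymbol{c}) = 0$, contradicting the strict sign of $\mathrm{tr}(H)$ forced by $H$ being positive or negative definite. Hence any nodal/Neumann intersection $\boldsymbol{c}$ is either a saddle or a regular point of $f$. At a regular nodal point the nodal line has tangent in $\ker df$ while the Neumann line is an integral curve of $\nabla f$, so the two tangents are automatically $g$-orthogonal and the angle is $\nicefrac{\pi}{2}$. At a saddle $\boldsymbol{c}$ on the nodal set, $f(\boldsymbol{c}) = 0$ and the eigenfunction equation give $\mathrm{tr}(H) = \mu_{1} + \mu_{2} = 0$; in normal coordinates the Taylor expansion reads
\begin{equation}
f(x) = \tfrac{1}{2}\bigl(\mu_{1} x_{1}^{2} + \mu_{2} x_{2}^{2}\bigr) + O(|x|^{3}) = \tfrac{\mu_{1}}{2}(x_{1}^{2} - x_{2}^{2}) + O(|x|^{3}),
\end{equation}
so the nodal set at $\boldsymbol{c}$ is tangent to the lines $x_{1} = \pm x_{2}$, while by part (1) the four Neumann lines are tangent to the coordinate axes. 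The angle between any nodal and any Neumann tangent is therefore $\nicefrac{\pi}{4}$, completing the proof.
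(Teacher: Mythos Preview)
Your proposal is correct and follows essentially the same approach as the paper: both work in coordinates where $g$ is the identity at $\boldsymbol{c}$, linearise the gradient flow there, diagonalise the symmetric Hessian, and identify the tangent directions of Neumann (and nodal) lines with the Hessian eigendirections. The only difference is stylistic---the paper solves the linearised system explicitly as $(x(t),y(t))=(a_{x}e^{-\alpha_{x}t},a_{y}e^{-\alpha_{y}t})$ and reads off the asymptotic tangents directly, whereas you invoke the stable manifold theorem and the fast/slow eigendirection dichotomy; the mathematical content is the same.
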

\begin{proof}
We start by some preliminaries that are relevant to proving all parts
of the proposition. Let $\boldsymbol{c}$ be an arbitrary critical
point of $f$. We may find a local coordinate system $(x,y)$ around
$\bs c$, such that $\boldsymbol{c}=\left(0,0\right)$ and $\partial_{x}$,
$\partial_{y}$ is an orthonormal basis for the tangent space $T_{\boldsymbol{c}}M$
with respect to the metric $g$ at $\bs c$. This means, in particular,
that in those coordinates, $g$ at $\bs c$ is the identity. Thus,
we get that the angle between any two vectors, $u,v\in T_{\boldsymbol{c}}M$
is given by the usual Euclidean inner product, $\left\langle u,v\right\rangle _{\R^{2}}$.\\
Next, we analyze the Neumann lines which start or end at $\bs c$.
To do that, we keep in mind that Neumann lines are gradient flow lines
which start or end at a saddle point (see appendix), so we first seek
for gradient flow lines. Using \cite[Lemma~4.4]{BanHur_MorseHomology04}
we deduce that the first (matrix-valued) coefficient in the Taylor
expansion of $\nabla f$ is $\hess f|_{\boldsymbol{c}}$. Hence, the
gradient flow equations, (\ref{eq:flow}), written in this local coordinate
system, satisfy
\begin{equation}
\begin{pmatrix}x'\left(t\right)\\
y'\left(t\right)
\end{pmatrix}=-\hess f|_{\boldsymbol{c}}\cdot\begin{pmatrix}x\left(t\right)\\
y\left(t\right)
\end{pmatrix}+O\left(\left\Vert \left(x\left(t\right),y\left(t\right)\right)\right\Vert _{\R^{2}}^{2}\right).\label{eq: diffgrad}
\end{equation}
As the Hessian is symmetric, we may diagonalize it by an orthonormal
change of the coordinates and get
\[
\hess f|_{\boldsymbol{c}}=\begin{pmatrix}\alpha_{x} & 0\\
0 & \alpha_{y}
\end{pmatrix},
\]
where $\alpha_{x},\alpha_{y}$ are both non-zero since $f$ is a Morse
function. In those new coordinates, $g$ at $\bs c$ is still the
identity. Hence, the assumption in the second part of the proposition,
that the Hessian is not proportional to $g$, is equivalent to $\alpha_{x}\neq\alpha_{y}$.
In the vicinity of $\boldsymbol{c}$ the gradient flow equations,
(\ref{eq: diffgrad}), may now be approximated by
\[
\begin{pmatrix}x'\left(t\right)\\
y'\left(t\right)
\end{pmatrix}=\begin{pmatrix}-\alpha_{x}\thinspace x\left(t\right)\\
-\alpha_{y}\thinspace y\left(t\right)
\end{pmatrix},
\]
 where we abuse notation by using $(x,y)$ again to denote the new
coordinates which diagonalize the Hessian. The solutions of the above
are
\begin{equation}
\begin{pmatrix}x\left(t\right)\\
y\left(t\right)
\end{pmatrix}=\begin{pmatrix}a_{x}e^{-\alpha_{x}t}\\
a_{y}e^{-\alpha_{y}t}
\end{pmatrix},\,\,\,\textrm{with}\,\,a_{x},a_{y},t\in\R.\label{eq:flow_lines}
\end{equation}

Consider first the case of $\alpha_{x}\ne\alpha_{y}$ both positive,
i.e., $\boldsymbol{c}$ is a minimum point. In this case, all the
flow lines (\ref{eq:flow_lines}) asymptotically converge to $\bs c$
as $t\rightarrow\infty$. Recall that $\alpha_{x}\neq\alpha_{y}$
by assumption. This allows to assume without loss of generality that
$\alpha_{y}>\alpha_{x}>0$. If $a_{x}\ne0$, we get that asymptotically
as $t\rightarrow\infty$
\[
\begin{pmatrix}x\left(t\right)\\
y\left(t\right)
\end{pmatrix}=e^{-\alpha_{x}t}\begin{pmatrix}a_{x}\\
a_{y}e^{-\left(\alpha_{y}-\alpha_{x}\right)t}
\end{pmatrix}\sim e^{-\alpha_{x}t}\begin{pmatrix}a_{x}\\
0
\end{pmatrix}.
\]
Any such flow line is tangential to the $\pm\hat{x}$ direction at
$\bs c$. This gives a continuous family of gradient flow lines, some
of which are actually also Neumann lines (this depends on whether
or not there is a saddle point at their other end, $t\rightarrow-\infty$).
Hence, the possible angles between any of those Neumann lines at $\bs c$
are either $0$ or $\pi$. In addition, if $a_{x}=0$, we get a gradient
flow line which is tangential to the $\pm\hat{y}$ direction at $\bs c$.
This gradient flow line (which is not necessarily a Neumann line)
makes an angle of $\nicefrac{\pi}{2}$ with all others. This proves
the second part of the proposition if $c$ is a minimum point. The
case of a maximum is proven in exactly the same manner.

Next we prove the first part of the proposition. If $\bs c$ is a
saddle point, then $\alpha_{x},\alpha_{y}$ are of different signs.
The only gradient flow lines, (\ref{eq:flow_lines}), which start
or end at $\bs c$ are those for which either $a_{x}=0$ or $a_{y}=0$.
At $\bs c$, these lines are either tangential to $\hat{x}$ (if $a_{y}=0$)
or tangential to $\hat{y}$ (if $a_{x}=0$). These are indeed Neumann
lines, as they are connected to a saddle point ($\bs c$). There are
four such Neumann lines, corresponding to all possible sign choices
($a_{x}=0$ and $a_{y}$ is positive\textbackslash{}negative or $a_{y}=0$
and $a_{x}$ is positive\textbackslash{}negative). The angles between
any neighbouring two lines out of the four is therefore $\nicefrac{\pi}{2}$.

Finally, we prove the third part of the proposition. If $\bs c$ is
a critical point, with $\nabla f|_{\bs c}=0$, and $f\left(\boldsymbol{c}\right)=0$
then it must be a saddle point, since maxima of a Laplacian eigenfunction
are positive and minima are negative. As $f$ is a Laplace-Beltrami
eigenfunction, we get
\begin{equation}
0=-\lambda f(\bs c)=\Delta f(\bs c)=\mathrm{trace}\hess f|_{\boldsymbol{c}}.\label{eq:trace-hessian}
\end{equation}
The sum of Hessian eigenvalues is therefore zero and we may denote
those by $\pm\alpha$. Choosing a coordinate system which diagonalizes
the Hessian at $\bs c=(0,0)$, we get
\begin{align*}
f\left(x,y\right) & =\frac{1}{2}\left(\alpha x^{2}-\alpha y^{2}\right)+O\left(\left\Vert \left(x\left(t\right),y\left(t\right)\right)\right\Vert _{\R^{2}}^{3}\right).
\end{align*}
This shows that the nodal lines of $f$ at $\boldsymbol{c}$ may be
approximated by $y=\pm x$. We have already seen in the previous part
of the proof that the Neumann lines which are connected to a saddle
point, $\boldsymbol{c}$, are tangential to either the $\hat{x}$
or the $\hat{y}$ axis and this gives an angle of $\nicefrac{\pi}{4}$
between neighbouring Neumann and nodal lines.

If $\bs c$ is not a critical point then $\nabla f|_{\boldsymbol{c}}\neq0$
and we may write $\mathrm{d}f(v)=\left\langle \nabla f|_{\boldsymbol{c}},v\right\rangle _{\R^{2}}$
for every $v\in T_{\bs c}M$. By taking $v$ in the direction of the
nodal line, we get that the angle between the Neumann line and the
nodal line at $\bs c$ is $\left\langle \nabla f|_{\boldsymbol{c}},v\right\rangle _{\R^{2}}$,
as $g$ is the identity at $\bs c$. Now, since $f$ is constant along
the nodal line we have $\mathrm{d}f(v)=0$, and get that the angle
between the nodal line and the Neumann line is $\nicefrac{\pi}{2}$.
\end{proof}
\begin{rem*}
It is also stated in \cite[theorem 3.1]{McDFul_ptrs13} that an angle
of $\nicefrac{\pi}{2}$ between Neumann lines at an extremal point
is non-generic (or ``unstable special case'', citing \cite{McDFul_ptrs13}).
The proof of the first part of the proposition clarifies why it is
so.
\end{rem*}
The angles between Neumann lines may be observed in Figures \ref{fig:Neumann-Domains-Torus-Basic}
and \ref{fig:Neumann-domains-schematic}. The exact angles in Figure
\ref{fig:Neumann-Domains-Torus-Basic} are better seen when zooming
in (see right part of the figure).

Proposition \ref{prop:angles-at-critical-pts} allows to classify
Neumann domains to three distinguished types, as was suggested in
\cite{BanEggTay_arXiv17}. Each Neumann domain has one maxima and
one minima on its boundary. Assume that the Neumann domain is of type
$(i)$ as depicted in Figure \ref{fig:Neumann-domains-schematic},
i.e., it does not have an extremal point which is connected only to
a single Neumann line. We call a Neumann domain
\begin{itemize}
\item star-like if both angles at its extremal points are $0$,
\item lens-like if both angles at its extremal points are $\pi$,
\item wedge-like if one of those angles is $0$ and the other is $\pi$.
\end{itemize}
Those three types of domains are indicated in Figure \ref{fig:Neumann-Domains-Torus-Basic}(Right)
by (s), (l), (w), correspondingly.

Note that this classification requires a couple of genericity assumptions:
that the Hessian at the extremal points is not proportional to the
metric and that Neumann lines do not meet perpendicularly at an extremal
point (see remark after Proposition \ref{prop:angles-at-critical-pts}).
Indeed, our numeric explorations reveal that Neumann domains are categorized
into those three types \cite{BanEggTay_arXiv17}.

\subsection{Area to perimeter ratio}
\begin{defn}
\label{def:Area-to-Perimeter-Manifolds} \cite{EGJS07} Let $f$ be
a Morse eigenfunction corresponding to the eigenvalue $\lambda$ and
let $\Omega$ be a Neumann domain of $f$. We define the normalized
area to perimeter ratio of $\Omega$ by
\[
\rho(\Omega):=\frac{\left|\Omega\right|}{\left|\partial\Omega\right|}\sqrt{\lambda},
\]
with $\left|\Omega\right|$ being the area of $\Omega$ and $\left|\partial\Omega\right|$
the total length of its perimeter.\\

This parameter was introduced in \cite{EGJS07} in order to quantify
the geometry of nodal domains. A related quantity, $\frac{\sqrt{\left|\Omega\right|}}{\left|\partial\Omega\right|}$,
is a classical one, and it is known to be bounded from above by $\frac{1}{2\sqrt{\pi}}$
(isoperimetric inequality \cite{Federer_book69}). The value $\frac{\left|\Omega\right|}{\left|\partial\Omega\right|}$
has also an interesting geometric meaning - it is the mean chord length
of the two-dimensional shape $\Omega$. The mean chord length is defined
as follows: consider all the parallel chords in a chosen direction
and take their average length. The mean chord length is then the uniform
average over all directions of that average length\footnote{We thank John Hannay for pointing out this interesting geometrical
meaning to us.}.

There are some numerical explorations, which were performed to study
the values of $\rho$ for Neumann domains. In \cite{BanEggTay_arXiv17}
the numerics was done for random eigenfunctions on the flat torus,
where the eigenvalues are highly degenerate. More specifically, for
a particular eigenvalue, many random eigenfunctions were chosen out
of the corresponding eigenspace and the $\rho$ value was numerically
computed for all their Neumann domains. The obtained probability distribution
of $\rho$ for three different eigenvalues is shown in Figure \ref{fig:rho-distribution-manifolds},(i).
A few interesting observations can be made from those plots. First,
it seems that the probability distribution does not depend on the
eigenvalue. Furthermore, in Figure \ref{fig:rho-distribution-manifolds},(ii)
the distribution was drawn separately for each of the three types
of Neumann domains mentioned in the previous subsection (star, lens
and wedge). The lens-like domains tend to get higher $\rho$ values,
star-like domains get lower values and the wedge-like are intermediate.
Another conclusion which may be drawn from these plots is related
to the spectral position of the Neumann domains, which is described
in detail in the next section.
\end{defn}
\begin{figure}[h]
\centering{}\includegraphics[width=1\textwidth]{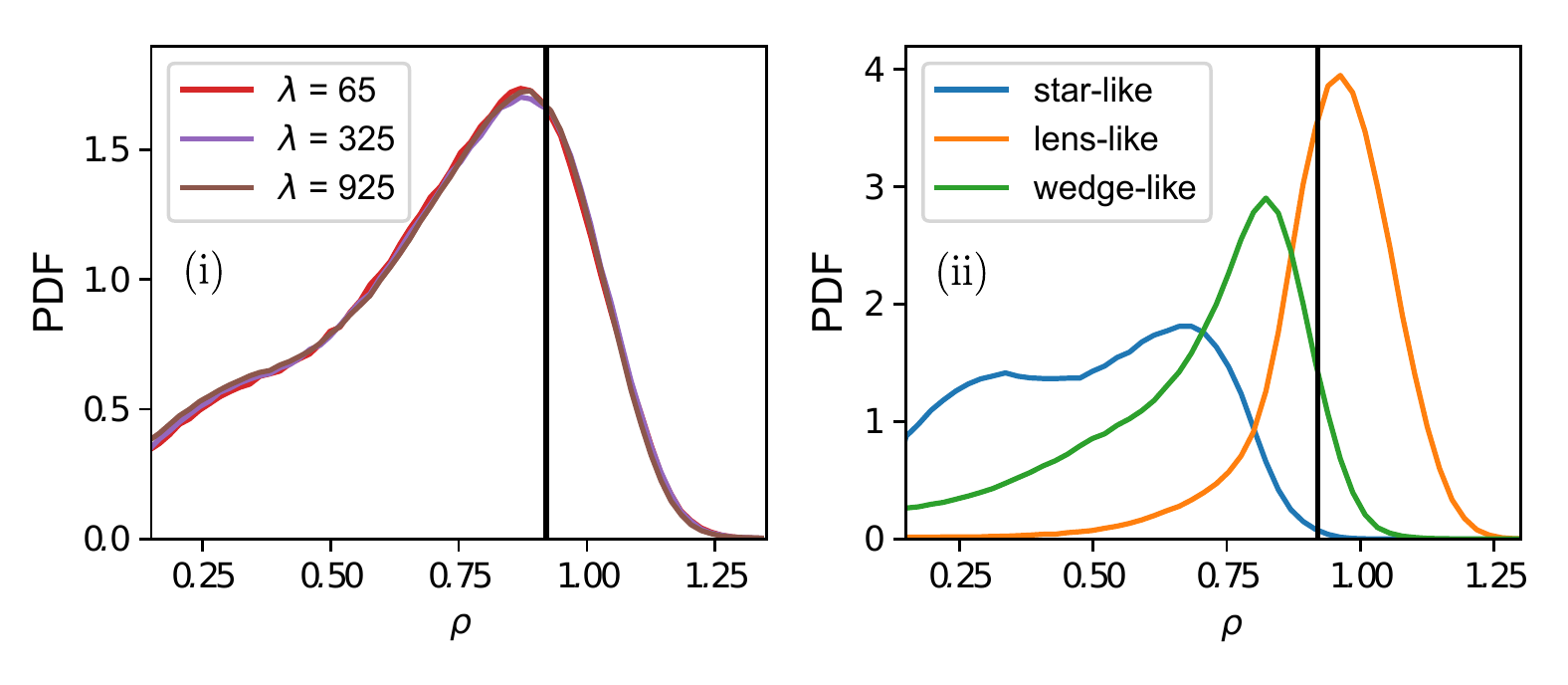}
\caption{(i): A probability distribution function of $\rho$-values of Neumann
domains for three different eigenvalues, (ii): A probability distribution
function of $\rho$-values of Neumann domains for $\lambda=925$ for
lens-like, wedge-like and star-like domains. The vertical black line
marks the value $\rho\approx0.9206$ (see Proposition \ref{prop:Rho-upper-bound-manifold},(\ref{enu:prop-Rho-upper-bound-manifold-2})).
The numerical data was calculated for approximately 9000 eigenfunctions
for each eigenvalue. The right plot is based on data of approximately
$8.5\cdot10^{6}$ Neumann domains. }
\label{fig:rho-distribution-manifolds}
\end{figure}

We may compare those results with the ones obtained for the distribution
of $\rho$ for nodal domains \cite{EGJS07}. It is shown in \cite{EGJS07}
that for nodal domains of separable eigenfunctions $\frac{\pi}{4}<\rho<\frac{\pi}{2}$.
Furthermore, it is numerically observed there that these bounds are
satisfied with probability $1$ for random eigenfunctions. Also, the
calculated probability distribution of $\rho$ for nodal domains looks
qualitatively different when comparing to Figure \ref{fig:rho-distribution-manifolds}
(see for example figures 1,2,6 in \cite{EGJS07}).

\section{Spectral position of $\Omega$\label{sec:Spectral-Position-Manifolds}}

Consider a nodal domain $\Xi$ of some eigenfunction $f$ corresponding
to an eigenvalue $\lambda$. It is known that $\left.f\right|_{\Xi}$
is the first eigenfunction (ground-state) of $\Xi$ with Dirichlet
boundary conditions \cite{Cou_ngwgmp23}. Equivalently, $\lambda$
is the lowest eigenvalue in the Dirichlet spectrum of $\Xi$. This
observation is fundamental in many results concerning nodal domains
and their counting. In this section we consider the analogous statement
for Neumann domains. Our starting point is Lemma \ref{lem:Restriction-is-Neumann-eigenfunction-manifold},
according to which an eigenvalue $\lambda$ appears in the Neumann
spectrum of each of its Neumann domains. This allows the following
definition.
\begin{defn}
\label{def:Spectral-Position} Let $f$ be a Morse eigenfunction of
an eigenvalue $\lambda$ and let $\Omega$ be a Neumann domain of
$f$. We define the spectral position of $\Omega$ as the position
of $\lambda$ in the Neumann spectrum of $\Omega$. It is explicitly
given by
\begin{equation}
N_{\Omega}(\lambda):=\left|\left\{ \lambda_{n}\in\mathrm{Spec}(\Omega)~:~\lambda_{n}<\lambda\right\} \right|,\label{eq:Spectral-Position}
\end{equation}
where $\mathrm{Spec}(\Omega):=\{\lambda_{n}\}_{n=0}^{\infty}$ is
the Neumann spectrum of $\Omega$, containing multiple appearances
of degenerate eigenvalues and including $\lambda_{0}=0$.\\
\end{defn}
\begin{rem*}
~
\begin{enumerate}
\item It can be shown (see \cite{BanEggTay_arXiv17}) that if $\Omega$
is a Neumann domain, then its Neumann spectrum is purely discrete.
This makes the above well-defined.
\item If $\lambda$ is a degenerate eigenvalue of $\Omega$, then by this
definition the spectral position is the lowest position of $\lambda$
in the spectrum.
\item For any Neumann domain, $N_{\Omega}(\lambda)>0$. Indeed, $N_{\Omega}(\lambda)=0$
is possible only for $\lambda=0$, but the zero eigenvalue corresponds
to the constant eigenfunction and this does not have Neumann domains
at all.
\end{enumerate}
\end{rem*}
A qualitative feeling on the value of $N_{\Omega}(\lambda)$ might
be given by Theorem \ref{thm:topological-properties-manifolds}. This
theorem implies that the topography of $f|_{\Omega}$ cannot be too
complex; its domain, $\Omega$, is simply connected domain; $f|_{\Omega}$
has no critical points in the interior of $\Omega$; and its zero
set is merely a single simple non-intersecting curve. These observations
suggest that $f|_{\Omega}$ might not lie too high in the spectrum
of $\Omega$. Such a belief is also apparent in \cite{Zel_sdg13},
where it is written that possibly, the spectral position of Neumann
domains 'often' equals one, just as in the case of nodal domains.
Our task is to study the possible values of $N_{\Omega}(\lambda)$
for various eigenfunctions and their Neumann domains and to investigate
to what extent $\lambda$ is indeed the first non trivial eigenvalue
of $\Omega$ ($N_{\Omega}(\lambda)=1$). We proceed by relating the
spectral position and the area to perimeter ratio (Definition \ref{def:Area-to-Perimeter-Manifolds}).

\subsection{Connecting spectral position and area to perimeter ratio}

The spectral position may be used to bound from above the area to
perimeter ratio. This holds as the area to perimeter ratio may be
written as
\[
\rho(\Omega)=\frac{\sqrt{\left|\Omega\right|}}{\left|\partial\Omega\right|}\sqrt{\left|\Omega\right|\lambda},
\]
where the first factor is bounded from above by the classical geometric
isoperimetric inequality $\frac{\sqrt{\left|\Omega\right|}}{\left|\partial\Omega\right|}\leq\frac{1}{2\sqrt{\pi}}$
\cite{Federer_book69}, and the second factor is bounded from above
by the spectral isoperimetric inequality, once the spectral position
is known. We state below the exact result, whose proof is given in
\cite{BanEggTay_arXiv17}.
\begin{prop}
\label{prop:Rho-upper-bound-manifold} \cite{BanEggTay_arXiv17} Let
$f$ be a Morse eigenfunction corresponding to eigenvalue $\lambda$.
Let $\Omega$ be a Neumann domain of $f$. We have
\begin{enumerate}
\item $\rho(\Omega)\leq\sqrt{2}N_{\Omega}(\lambda)$.\label{enu:prop-Rho-upper-bound-manifold-1}
\item if $N_{\Omega}(\lambda)=1$ then $\rho(\Omega)\leq\frac{j^{2}}{2}\approx0.9206$\label{enu:prop-Rho-upper-bound-manifold-2}
\item if $N_{\Omega}(\lambda)=2$ then $\rho(\Omega)\leq\frac{j^{2}}{\sqrt{2}}\approx1.3019$,
\label{enu:prop-Rho-upper-bound-manifold-3}
\end{enumerate}
where $j$ denotes the first zero of the derivative of the $J_{1}$
Bessel function.
\end{prop}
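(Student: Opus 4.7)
The plan is to carry out the factorization that the paragraph preceding the proposition already hints at. Write
\[
\rho(\Omega)\;=\;\frac{\sqrt{|\Omega|}}{|\partial\Omega|}\,\sqrt{|\Omega|\lambda}
\]
and square: the classical planar isoperimetric inequality $\tfrac{\sqrt{|\Omega|}}{|\partial\Omega|}\le\tfrac{1}{2\sqrt{\pi}}$ yields $\rho(\Omega)^{2}\le\tfrac{|\Omega|\lambda}{4\pi}$. The problem is thereby reduced to a spectral upper bound on the scale-free product $|\Omega|\lambda$, which by Lemma \ref{lem:Restriction-is-Neumann-eigenfunction-manifold} together with Definition \ref{def:Spectral-Position} equals $|\Omega|\,\mu_{N}(\Omega)$, where $N:=N_{\Omega}(\lambda)$ and $0=\mu_{0}<\mu_{1}\le\mu_{2}\le\dots$ is the Neumann spectrum of $\Omega$.

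For each of the three parts I would then invoke a sharp or sharp-up-to-constant upper bound on $\mu_{N}(\Omega)\,|\Omega|$. For part (\ref{enu:prop-Rho-upper-bound-manifold-2}) this is the Szeg\H{o}--Weinberger inequality $\mu_{1}(\Omega)\,|\Omega|\le \pi j^{2}$, with $j=j'_{1,1}$ the first positive zero of $J_{1}'$; substituted into the squared-isoperimetric bound this gives exactly the constant announced there. For part (\ref{enu:prop-Rho-upper-bound-manifold-3}) it is the analogous sharp upper bound for the second Neumann eigenvalue, $\mu_{2}(\Omega)\,|\Omega|\le 2\pi j^{2}$, due to Girouard--Nadirashvili--Polterovich. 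For part (\ref{enu:prop-Rho-upper-bound-manifold-1}) I would appeal to Kr\"{o}ger's upper bound for the $N$-th Neumann eigenvalue, which in two dimensions gives $\mu_{N}(\Omega)\,|\Omega|\le CN$ with an explicit constant $C$; in fact a coarser version $\mu_{N}(\Omega)\,|\Omega|\le 8\pi N^{2}$ obtained by an elementary construction of $N$ mutually orthogonal trial functions already produces $\rho(\Omega)\le\sqrt{2}\,N$ after combining with the isoperimetric factor, which is what is claimed.

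The main obstacle I anticipate is geometric rather than spectral. The classical Euclidean isoperimetric inequality and all three spectral inequalities above are usually stated for bounded domains in $\mathbb{R}^{2}$ with Lipschitz boundary, whereas $\Omega$ sits on an abstract closed Riemannian 2-manifold $(M,g)$ and, by Proposition \ref{prop:angles-at-critical-pts}, its boundary is piecewise smooth with corners of prescribed angles at the extrema and saddles. One needs to verify that the symmetrization and trial-function arguments underlying these inequalities survive in this setting, either by a direct $H^{1}$ argument for Lipschitz domains with corners or by smoothing $\partial\Omega$ and passing to a limit. A second, milder issue is that the planar isoperimetric inequality is not universal on a curved surface, so either a curvature hypothesis must be imposed or a comparison argument used. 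Once these technical points are handled, the arithmetic that turns $\pi j^{2}$, $2\pi j^{2}$ and $8\pi N^{2}$ into the numerical constants of the proposition is routine.
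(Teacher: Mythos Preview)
Your approach is essentially the one the paper itself sketches in the paragraph immediately preceding the proposition: factor $\rho(\Omega)=\frac{\sqrt{|\Omega|}}{|\partial\Omega|}\sqrt{|\Omega|\lambda}$, bound the first factor by the geometric isoperimetric inequality and the second by a spectral isoperimetric inequality matched to the value of $N_{\Omega}(\lambda)$. The paper does not give a self-contained proof here but defers to \cite{BanEggTay_arXiv17}; your identification of Szeg\H{o}--Weinberger for $N=1$, the Girouard--Nadirashvili--Polterovich bound for $N=2$, and a Kr\"oger-type bound for general $N$ is exactly in line with that outline, and the technical caveats you raise (corners on $\partial\Omega$, curvature of $(M,g)$) are precisely the points that need care in the cited reference.
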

The bounds above may be used to gather information on the spectral
position. The calculation of $\rho(\Omega)$ is easier (either numerically
or sometimes even analytically) than this of $N_{\Omega}(\lambda)$.
As an example, we bring the probability distribution of $\rho$ given
in Figure \ref{fig:rho-distribution-manifolds},(i). The distribution
was calculated numerically for random eigenfunctions on the torus.
It is easy to observe that a substantial proportion of the Neumann
domains have a $\rho$ value which is larger than $0.9206$, the upper
bound given in Proposition \ref{prop:Rho-upper-bound-manifold}$(ii)$.
Hence, all those Neumann domains have spectral position which is larger
than one, $N_{\Omega}(\lambda)>1$. We note that those results seem
to be independent of the particular eigenvalue, as the $\rho$ distribution
itself seem not to depend on the eigenvalue. Those results are somewhat
counter-intuitive, due to what is written above (see discussion after
Definition \ref{def:Spectral-Position}). Furthermore, when calculating
the $\rho$ distribution separately for each of the three different
types of Neumann domains (Figure \ref{fig:rho-distribution-manifolds},(ii)),
the higher $\rho$ values of lens-like domains suggest that the spectral
position of those domains is higher. These results call for some further
investigation of the spectral position dependence on the shape of
the Neumann domains.

\subsection{Separable eigenfunctions on the torus}

The general problem of analytically determining the spectral position
is quite involved. Yet, there are some interesting results obtained
for separable eigenfunctions on the torus, which we review next. We
consider the flat torus with fundamental domain $\R^{2}/\Z^{2}$ equipped
with the Laplace operator. The eigenvalues are
\begin{align}
\lambda_{a,b}: & =\frac{\pi^{2}}{4}\left(\frac{1}{a^{2}}+\frac{1}{b^{2}}\right),\label{eq:Torus-eigenvalue}
\end{align}
 where
\begin{equation}
a:=\frac{1}{4m_{x}},~~b:=\frac{1}{4m_{y}},~~~\quad\textrm{for }m_{x},m_{y}\in\N.\label{eq:quantum-numbers-separable-efunc}
\end{equation}

We consider in the following only the separable eigenfunctions, which
may be written as
\begin{equation}
f_{a,b}(x,y)=\sin\left(\frac{\pi}{2a}x\right)\cos\left(\frac{\pi}{2b}y\right).\label{eq:Torus-eigenfunction}
\end{equation}
 Half of the Neumann domains of this eigenfunction are star-like and
congruent to each other and the other half are lens-like and also
congruent (Figure \ref{fig:Torus-with-star-and-lens}). We denote
those domains by $\stardom$ (Figure \ref{fig:Torus-with-star-and-lens}(ii))
and $\lensdom$ (Figure \ref{fig:Torus-with-star-and-lens}(iii)),
respectively, and in the following we investigate their spectral position.
\begin{figure}
\centering{}\includegraphics[width=1\textwidth]{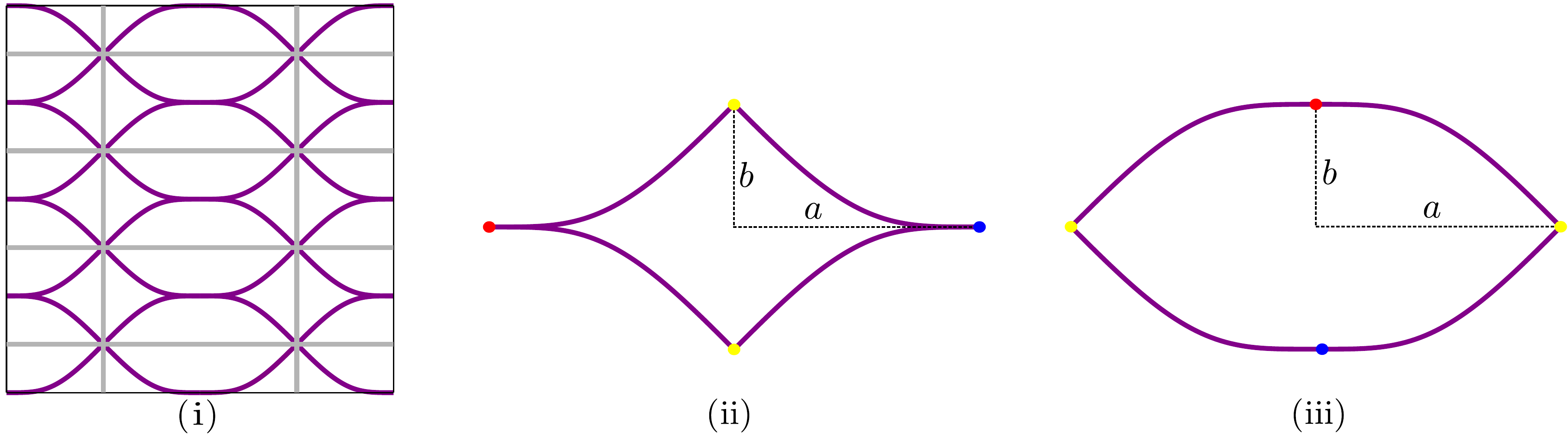} \caption{(i): Grey lines indicate the nodal set and purple lines indicate the
Neumann set of a torus eigenfunction $f(x,y)=\sin(2\pi x)\cos(4\pi y)$.
(ii) and (iii): the star-like and lens-like Neumann domains of a separable
eigenfunction (\ref{eq:Torus-eigenfunction}), with the typical lengths
$a,\thinspace b$ marked as dashed lines. Saddle points are marked
by yellow points and extrema by blue and red points.}
\label{fig:Torus-with-star-and-lens}
\end{figure}

First, we may consider only the case $b\leq a$ thanks to the symmetry
of the problem. Second, the spectral position of either $\stardom$
or $\lensdom$ depends only on the ratio $\frac{b}{a}$, as rescaling
both $a$ and $b$ by the same factor amounts to an appropriate rescaling
of the Neumann domain together with the restriction of the eigenfunction
to it. The next theorem summarizes results on the spectral positions
of $\stardom$ and $\lensdom$ from \cite{BanEggTay_arXiv17} and
\cite{BanFaj_ahp16}.
\begin{thm}
\label{thm:Spectral-pos-star-and-lens}\cite{BanEggTay_arXiv17,BanFaj_ahp16}
\end{thm}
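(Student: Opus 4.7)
\medskip

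\noindent\textbf{Proof plan.} The strategy is to exploit the product structure of $f_{a,b}$ to reduce the Neumann eigenvalue problem on each of $\stardom$ and $\lensdom$ to one-dimensional Sturm--Liouville problems, and then count explicitly how many Neumann eigenvalues of each domain lie strictly below $\lambda_{a,b}$. Because $\na f_{a,b}$ factorises as a product of trigonometric functions in $x$ and in $y$, a gradient flow line is either contained in a horizontal line $\{y=\mathrm{const}\}$ where $\cos(\pi y/(2b))=0$, or in a vertical line $\{x=\mathrm{const}\}$ where $\sin(\pi x/(2a))=0$. Hence the boundaries $\d\stardom$ and $\d\lensdom$ are unions of horizontal and vertical straight segments, each of length $a$ or $b$, so the domains are axis-aligned polygons (possibly with cusps at the extrema, where by Proposition \ref{prop:angles-at-critical-pts} two Neumann lines meet with angle $0$ or $\pi$).

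\medskip

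\noindent First I would write down, for each of $\stardom$ and $\lensdom$, the explicit polygon and record which sides are horizontal (imposing $\d_y u = 0$) and which are vertical (imposing $\d_x u = 0$). I would then make the ansatz $u(x,y)=X(x)Y(y)$; the axis-aligned nature of the boundary, together with the fact that the geometry is invariant under the reflections $x\mapsto-x$, $y\mapsto-y$ about each extremum and each saddle, promotes this ansatz to a complete basis via a reflection/unfolding argument: the domain can be unfolded by repeated reflections to a rectangle (or a small torus) on which the Neumann problem genuinely separates, and the restriction back to the fundamental domain picks out the eigenfunctions satisfying the correct parity at each boundary segment.

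\medskip

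\noindent Once separation is justified, the Neumann spectrum is of the form $\{\mu_m+\nu_n\}_{m,n\ge 0}$, where $\{\mu_m\}$ and $\{\nu_n\}$ are the eigenvalues of two explicit one-dimensional problems on intervals of lengths proportional to $a$ and $b$ respectively, with endpoint conditions dictated by the geometry (Neumann at a smooth flat side, and the ``free'' parity condition inherited from the unfolding at a cusp/extremum). In particular, Lemma \ref{lem:Restriction-is-Neumann-eigenfunction-manifold} guarantees that $\lambda_{a,b}=\mu_{m_0}+\nu_{n_0}$ for distinguished indices $(m_0,n_0)$ coming from $f_{a,b}$ itself. Counting $N_\Omega(\lambda_{a,b})$ then reduces to the purely elementary task of enumerating pairs $(m,n)$ with $\mu_m+\nu_n<\lambda_{a,b}$, whose answer depends only on the ratio $b/a\in(0,1]$.

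\medskip

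\noindent The main obstacle is the justification of the separation step for $\stardom$, whose boundary has genuine cusps (angle $0$) at its extrema rather than a smooth passage. One must check that the cusp points are removable from the spectral problem: because they are isolated zeros of $\na f$ and the Neumann eigenfunctions extend continuously to them, a standard capacity/removability argument, together with the reflection unfolding, identifies the Neumann Laplacian on $\stardom$ with a genuinely separable operator on a smoother auxiliary domain. After this reduction the counting becomes mechanical, and the dependence of $N_{\stardom}(\lambda_{a,b})$ and $N_{\lensdom}(\lambda_{a,b})$ on $b/a$ drops out by comparing the sizes of the one-dimensional eigenvalues as $b/a$ varies.
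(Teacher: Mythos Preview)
Your proposal rests on a geometric claim that is false: the Neumann lines of $f_{a,b}$ are \emph{not} horizontal or vertical segments. What you have described --- the lines $\{x=2ak\}$ where $\sin(\pi x/(2a))=0$ and $\{y=b(2l+1)\}$ where $\cos(\pi y/(2b))=0$ --- is precisely the \emph{nodal} set of $f_{a,b}$, not its Neumann line set. To see that the Neumann lines are genuinely curved, compute the Hessian at a saddle, say $\bs r=(0,b)$: one finds $\partial_x^2 f=\partial_y^2 f=0$ and $\partial_x\partial_y f=-\tfrac{\pi^2}{4ab}$, so the Hessian eigenvectors are $(1,1)$ and $(1,-1)$. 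Thus the four Neumann lines leave the saddle at $45^\circ$ to the axes, in agreement with Proposition~\ref{prop:angles-at-critical-pts}\,(3), which says nodal and Neumann lines meet at $\pi/4$ at a saddle. One can check directly that neither the horizontal line $\{y=b\}$ nor the vertical line $\{x=0\}$ is invariant under the gradient flow (on $\{y=b\}$ one has $x'=0$ but $y'\neq 0$, so the flow leaves the line immediately), and that the diagonal segment from $(0,b)$ to $(a,0)$ is a flow line only in the degenerate case $a=b$. Consequently $\stardom$ and $\lensdom$ have curved boundaries, the Neumann problem does \emph{not} separate, and the reduction to one-dimensional Sturm--Liouville problems collapses at the outset.

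The paper's arguments are of an entirely different nature and do not attempt to compute the Neumann spectrum of either domain explicitly. For part~(\ref{enu:Lens-spectral-pos-unbounded}) the proof is by contradiction: if $N_{\lensdom}(\lambda_{a,b})$ remained bounded as $b\to 0$ with $a$ fixed, a Weyl-type inequality would force the total number of Neumann domains to grow faster than it actually does (this number is known exactly for separable eigenfunctions). For part~(\ref{enu:star-spectral-pos-is-one}) the argument combines the two reflection symmetries of $\stardom$, a non-standard rearrangement through an auxiliary sector domain (in the spirit of \cite{LioPac_pams90,LioPacTri_iumj88}), and the solution of a constrained geometric isoperimetric problem; separation of variables plays no role.
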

\begin{enumerate}
\item \label{enu:Lens-spectral-pos-unbounded}The set of spectral positions
of the \textbf{lens}-like domains $\left\{ N_{\lensdom}\left(\lambda_{a,b}\right)\right\} _{a,b}$
is unbounded. In particular, $N_{\lensdom}\left(\lambda_{a,b}\right)\rightarrow\infty$
for $\frac{a}{b}\rightarrow\infty$
\item \label{enu:star-spectral-pos-is-one}There exists $c>0$ such that
if $\frac{a}{b}>c$ then the spectral position of the \textbf{star}-like
domains is one, i.e., $N_{\stardom}\left(\lambda_{a,b}\right)=1$.
In addition, $\lambda_{a,b}$ is a simple eigenvalue of $\stardom$.
\end{enumerate}
\begin{rem*}
The condition $\frac{a}{b}>c$ in the second part of the theorem is
equivalent to the condition $\frac{m_{y}}{m_{x}}>c$ (see (\ref{eq:quantum-numbers-separable-efunc})).
As $m_{x},m_{y}\in\N$, this means that the claim in the second part
of the theorem is valid for a particular proportion of the separable
eigenfunctions on the torus. In particular, combining both parts of
the theorem, there is a range of values for $a,b$ for which $N_{\stardom}\left(\lambda_{a,b}\right)=1$,
but $N_{\lensdom}\left(\lambda_{a,b}\right)$ is as large as we wish.
\end{rem*}
The proofs of the two parts of this theorem are of different nature.
The proof of (\ref{enu:Lens-spectral-pos-unbounded}) appears in \cite{BanFaj_ahp16}.
It shows by means of contradiction that fixing the value of $a$ and
letting $b$ tend to zero the spectral positions $\{N_{\lensdom}\left(\lambda_{a,b}\right)\}_{a,b}$
cannot be bounded. This is done by proving that bounded spectral positions
would imply a too rapid growth of the number of Neumann domains. This
contradicts the actual growth of the number of Neumann domains, which
is explicitly known for those eigenfunctions.

The proof of (\ref{enu:star-spectral-pos-is-one}) appears in \cite{BanEggTay_arXiv17}.
It is based on three main ingredients. The first is the symmetry of
the domain $\stardom$ along a horizontal axis and a vertical axis.
The second is a non-standard rearrangement technique using a sector
as an intermediate domain \cite{LioPac_pams90,LioPacTri_iumj88} and
the third is the solution of a suitable geometric isoperimetric problem
with a constraint.

The motivation which stands behind Theorem \ref{thm:Spectral-pos-star-and-lens}
is the following. As already mentioned above, it was very natural
to believe that generically the spectral position equals one, just
as in the case of nodal domains. The first part of the theorem shows
that this belief is extremely violated in a particular case. The second
part somewhat revives this belief, by showing that this violation
which occurs for half of the Neumann domains is somewhat compensated
by the other half. We wonder whether this compensation holds for all
manifolds. For example, can it be that for any manifold, there exists
a constant $0<p\leq1$, such that each eigenfunction would have at
least a $p$ proportion of its Neumann domains with spectral position
equals to one? (see Lemma \ref{lem:Neumann-count-asymptotics}, where
a similar assumption is employed).

\section{Neumann domain count}

A wealth of results exists on the number of nodal domains. We start
this section by bounding the number of Neumann domains from below
in terms of the number of nodal domains. Denote the number of Neumann
domains of some eigenfunction $f$ by $\mu(f)$ and the number of
its nodal domains by $\nu(f)$. Observe that Theorem \ref{thm:topological-properties-manifolds},(\ref{enu:thm-topological-properties-manifolds-5})
implies that each Neumann domain intersects with exactly two nodal
domains (see discussion following Theorem \ref{thm:topological-properties-manifolds}).
This allows to conclude.
\begin{cor}
\label{cor:Neumann-count-bound-by-nodal-count}\cite{BanFaj_ahp16}
\begin{equation}
\mu(f)\geq\frac{1}{2}\nu(f).\label{eq:Neumann-bounded-by-nodal-manifold}
\end{equation}
\end{cor}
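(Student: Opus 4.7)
The plan is to leverage Theorem \ref{thm:topological-properties-manifolds}, part (\ref{enu:thm-topological-properties-manifolds-5}), specialized to the level $c = 0$, to set up a bipartite incidence count between Neumann domains and nodal domains.

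First, I would fix a Morse eigenfunction $f$ and a Neumann domain $\Omega \subset M$, bounded by the minimum $\bs p$ and the maximum $\bs q$ as in Definition \ref{def:Neumann-Domains-and-Lines}. For a Laplacian eigenfunction, the extremum values satisfy $f(\bs p) < 0 < f(\bs q)$ (since the Laplacian vanishes identically would contradict $f$ being a non-constant eigenfunction with a saddle present, and maxima of an eigenfunction are positive while minima are negative). Hence the choice $c = 0$ is admissible in claim (\ref{enu:thm-topological-properties-manifolds-5}), and we conclude that $\overline{\Omega} \cap f^{-1}(0)$ is a single smooth non-self-intersecting arc with endpoints on $\partial \Omega$. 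Consequently the open set $\Omega \setminus f^{-1}(0)$ has exactly two connected components; on one $f > 0$, on the other $f < 0$, and each lies entirely inside a single nodal domain of $f$. Thus \emph{every Neumann domain meets exactly two nodal domains}.

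Next I would perform a double count. Let $I := \#\{(\Omega, \Xi) : \Omega \text{ is a Neumann domain}, \Xi \text{ is a nodal domain}, \Omega \cap \Xi \neq \emptyset\}$. By the previous paragraph,
\begin{equation*}
I = \sum_{\Omega} \#\{\Xi : \Omega \cap \Xi \neq \emptyset\} = 2\mu(f).
\end{equation*}
On the other hand, because the union of the open Neumann domains is dense in $M$ (its complement, the Neumann line set $\Nd$, is a finite union of one-dimensional curves and hence has empty interior in the two-dimensional manifold $M$), every nodal domain $\Xi$ must intersect at least one Neumann domain. Therefore
\begin{equation*}
I = \sum_{\Xi} \#\{\Omega : \Omega \cap \Xi \neq \emptyset\} \geq \nu(f).
\end{equation*}
Combining the two displays yields $2\mu(f) \geq \nu(f)$, which is (\ref{eq:Neumann-bounded-by-nodal-manifold}).

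There is no real obstacle here; the main subtlety is just pinning down that every nodal domain is nonempty-intersected by some Neumann domain, which follows from the fact that $\Nd$ is a one-dimensional set in the two-dimensional manifold and therefore cannot contain an open subset of any nodal domain. All remaining substance is packed into Theorem \ref{thm:topological-properties-manifolds}(\ref{enu:thm-topological-properties-manifolds-5}), which is what forces each Neumann domain to intersect exactly two (rather than possibly more) nodal domains.
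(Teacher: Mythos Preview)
Your proposal is correct and follows essentially the same approach as the paper: the paper simply states that Theorem \ref{thm:topological-properties-manifolds}(\ref{enu:thm-topological-properties-manifolds-5}) implies each Neumann domain intersects exactly two nodal domains and then concludes, while you spell out the double-counting argument and the observation that every nodal domain must meet some Neumann domain since $\Nd$ has empty interior. Your added details are sound and fill in exactly what the paper leaves implicit.
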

\noindent Next, we equip the Neumann lines with a graph structure
which we call the Neumann set graph. This allows to provide further
estimates on the number of the Neumann domains. Let $f$ be a Morse
function on a closed two-dimensional manifold and consider its Neumann
set graph obtained by taking the vertices ($V$) to be all critical
points, the edges ($E$) are the Neumann lines connecting critical
points and the faces ($F$) are the Neumann domains. Define the \emph{valency
of a critical point}, $\textrm{val}\left(\bs x\right)$, as the number
of Neumann lines which are connected to $x$.
\begin{prop}
\noindent \cite{BanFaj_ahp16} We have
\begin{equation}
\left|E\right|\leq4\left|\Sd\right|,\label{eq:upper_bound_on_edge_number}
\end{equation}

\begin{equation}
\mu(f)\leq2\left|\Sd\right|,\label{eq:upper_bound_for_Neumann_count}
\end{equation}
where $\Sd$ is the set of saddle points of $f$. If we further assume
a Morse-Smale function we get equalities in both (\ref{eq:upper_bound_on_edge_number})
and (\ref{eq:upper_bound_for_Neumann_count}). In addition we have
\begin{align}
\mu(f)=\frac{1}{2}\sum_{\bs x\in\Xt}\textrm{val}\left(\bs x\right)\geq\frac{1}{2}\left|\Xt\right|=\frac{1}{2}\left(\chi\left(M\right)+\left|\Sd\right|\right) & ,\label{eq:lower_bd_on_Neumann_count_in_terms_of_saddles}
\end{align}
where $\chi(M)$ is the Euler characteristic of the manifold.
\end{prop}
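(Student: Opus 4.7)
My plan is to treat the Neumann set as a CW-decomposition of $M$ whose vertices are $\Cr$, edges are the Neumann lines, and faces are the Neumann domains, and then to combine elementary double-counting at saddles and extrema with Euler's formula. For the CW-structure one uses that distinct gradient flow lines do not cross by ODE uniqueness, each face is simply connected by Theorem~\ref{thm:topological-properties-manifolds}(\ref{enu:thm-topological-properties-manifolds-1}), and every critical point lies on the Neumann line set by Theorem~\ref{thm:topological-properties-manifolds}(\ref{enu:thm-topological-properties-manifolds-2}).

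For~(\ref{eq:upper_bound_on_edge_number}) I decompose $E=E_{ss}\sqcup E_{sx}$, where $E_{ss}$ consists of edges whose two endpoints are both saddles and $E_{sx}$ of edges joining a saddle to an extremum; every edge lies in one of these classes because by definition each Neumann line is a connected component of $W^{s}(\bs r)$ or $W^{u}(\bs r)$ for some saddle $\bs r$. Proposition~\ref{prop:angles-at-critical-pts} gives exactly four Neumann lines at each saddle, so summing incidences at saddles yields $4|\Sd|=2|E_{ss}|+|E_{sx}|$, whence $|E|=4|\Sd|-|E_{ss}|\leq 4|\Sd|$. In the Morse--Smale case, transversality in dimension two forces any intersection of the one-dimensional stable and unstable manifolds of two distinct saddles to be zero-dimensional, so it cannot contain a one-dimensional saddle connection; thus $|E_{ss}|=0$ and equality holds.

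Applying Euler's formula $|V|-|E|+|F|=\chi(M)$ with $|V|=|\Sd|+|\Xt|$, $|F|=\mu(f)$, together with the Poincar\'e--Hopf identity $|\Xt|-|\Sd|=\chi(M)$, I obtain $\mu(f)=|E|-2|\Sd|$. Combining with~(\ref{eq:upper_bound_on_edge_number}) proves~(\ref{eq:upper_bound_for_Neumann_count}), with equality in the Morse--Smale case. For~(\ref{eq:lower_bd_on_Neumann_count_in_terms_of_saddles}) the analogous double count at extrema gives $\sum_{\bs x\in\Xt}\textrm{val}(\bs x)=|E_{sx}|$, since no Neumann line has both endpoints in $\Xt$ (gradient flow lines connect only critical points of consecutive Morse index); the identities above turn this into $|E_{sx}|=4|\Sd|-2|E_{ss}|=2\mu(f)$, proving the first equality. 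The inequality $\sum \textrm{val}(\bs x)\geq|\Xt|$ reduces to $\textrm{val}(\bs x)\geq 1$ for every extremum, which follows from Theorem~\ref{thm:topological-properties-manifolds}(\ref{enu:thm-topological-properties-manifolds-2}): an extremum lies in $\Nd$ but cannot be an interior point of any Neumann line (it is a critical point of $f$), so at least one Neumann line must terminate at it. The closing equality $|\Xt|=\chi(M)+|\Sd|$ is Poincar\'e--Hopf once more.

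The one delicate step I anticipate is justifying the CW-bookkeeping so that Euler's formula genuinely applies---in particular, ruling out pathological behaviour of $\Nd$ and confirming that the faces of the graph are exactly the Neumann domains (which in turn uses that $M\setminus\Nd$ partitions into sets of the form $W^{s}(\bs p)\cap W^{u}(\bs q)$). Once that structural point is in place, everything collapses to the two incidence identities above plus arithmetic.
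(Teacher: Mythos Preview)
Your argument is correct and matches the paper's own approach, which is described only tersely as ``combining Euler's formula and Morse inequalities for the Neumann set graph.'' Your double-counting at saddles to get $4|\Sd|=2|E_{ss}|+|E_{sx}|$, the identity $\mu(f)=|E|-2|\Sd|$ via Euler's formula together with the Poincar\'e--Hopf relation $|\Xt|-|\Sd|=\chi(M)$, and the characterisation of the Morse--Smale case via Proposition~\ref{prop:Morse-Smale-on-2d} are exactly the intended ingredients; the paper does not spell out more detail than you have.
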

\noindent The proof of this proposition is done by combining Euler\textquoteright s
formula and Morse inequalities for the Neumann set graph.

\subsection*{The ratio $\frac{\mu_{n}}{n}$\label{subsec:mu_n-over-n}}

The most fundamental result for the nodal domain count is Courant's
bound $\frac{\nu_{n}}{n}\leq1$, where $\nu_{n}$ is the nodal count
of the $n^{\mathrm{th}}$ eigenfunction \cite{Cou_ngwgmp23}. Following
this, Pleijel had shown that $\limsup_{n\rightarrow\infty}\frac{\nu_{n}}{n}\leq\left(\frac{2}{j_{0,1}}\right)^{2}$,
where $j_{0,1}$ is the first zero of the $J_{0}$ Bessel function,
\cite{Pleijel_cpam56}. Many modern works concern the generalizations
or improvements of Pleijel's result, as well as the distribution of
the ratio $\frac{\nu_{n}}{n}$ \cite{BerHel_jst16,BluGnuSmi_prl02,Bourgain_imrn15,ChaHelHof_arXiv16,GnuLoi_jpa13,HelHof_spde15,Lena_arXiv16,Polterovich09,Steinerberger_ahp14}.
The study of the distribution of $\frac{\nu_{n}}{n}$ was initiated
in \cite{BluGnuSmi_prl02}. This distribution was presented there
for separable eigenfunctions on the rectangle and the disc. Later,
in \cite{GnuLoi_jpa13}, a more general calculation of the distribution
of $\frac{\nu_{n}}{n}$ was performed. It was done there for the Schr\"{o}dinger
operator on separable systems of any dimension.

In the following, we consider the analogous quantity, $\frac{\mu_{n}}{n}$,
the number of Neumann domains of the $n^{\mathrm{th}}$ eigenfunction
divided by $n$. We start by pointing out the connection between $\frac{\mu_{n}}{n},$
and the spectral position.
\begin{lem}
\label{lem:Neumann-count-asymptotics}Let $(M,g)$ be a two-dimensional,
connected, orientable and closed Riemannian manifold. Assume that
there exists $0<C\leq1$ such that
\begin{equation}
\sum_{\overset{\Omega\textrm{ s.t. }}{N_{\Omega}(\lambda_{n})=1}}\left|\Omega\right|>C\left|M\right|.\label{eq:Assumption-on-Spectral-position}
\end{equation}
 for all $\lambda_{n}$ in the spectrum of $M$, where the sum above
is over all Neumann domains (of an eigenfunction) of $\lambda_{n}$
whose spectral position equals one. Then

\begin{equation}
\liminf_{n\rightarrow\infty}\frac{\mu_{n}}{n}\geq C\left(\frac{2}{j}\right)^{2}.\label{eq:Neumann-count-ratio-lim-inf}
\end{equation}
\end{lem}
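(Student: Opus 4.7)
\emph{Proof plan.} The strategy is to bound $\mu_n$ from below by $\mu_n^{(1)}$, the number of Neumann domains of the $n$-th eigenfunction whose spectral position equals one, and then combine the hypothesis (\ref{eq:Assumption-on-Spectral-position}) with a Szegő--Weinberger-type upper bound on the first positive Neumann eigenvalue of a two-dimensional domain.

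If $N_\Omega(\lambda_n)=1$, then $\lambda_n$ is the first positive eigenvalue of the Neumann Laplacian on $\Omega$, and the Szegő--Weinberger inequality yields
\begin{equation*}
|\Omega|\;\leq\;\frac{\pi j^2}{\lambda_n}\bigl(1+\varepsilon_n\bigr),
\end{equation*}
where $j$ is the first positive zero of $J_1'$ and $\varepsilon_n\to 0$ as $n\to\infty$. The factor $(1+\varepsilon_n)$ accounts for the fact that $\Omega$ lies on a curved manifold rather than in $\R^2$: since $\mathrm{diam}(\Omega)=O(\lambda_n^{-1/2})$, in normal coordinates around any interior point of $\Omega$ the metric is Euclidean up to an error $O(\lambda_n^{-1})$, and the classical radial test-function argument for Szegő--Weinberger goes through with asymptotically sharp constant. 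Summing over the spectrally one-positioned Neumann domains and using (\ref{eq:Assumption-on-Spectral-position}),
\begin{equation*}
C\,|M| \;<\; \sum_{\Omega\,:\,N_\Omega(\lambda_n)=1} |\Omega| \;\leq\; \mu_n^{(1)} \cdot \frac{\pi j^2}{\lambda_n}\bigl(1+\varepsilon_n\bigr),
\end{equation*}
so that $\mu_n \geq \mu_n^{(1)} \geq \frac{C\,|M|\,\lambda_n}{\pi j^2(1+\varepsilon_n)}$. The conclusion then follows from Weyl's law on a closed two-dimensional Riemannian manifold, $\lambda_n/n\to 4\pi/|M|$ as $n\to\infty$, which gives
\begin{equation*}
\liminf_{n\to\infty}\frac{\mu_n}{n} \;\geq\; \frac{C|M|}{\pi j^2}\cdot\frac{4\pi}{|M|} \;=\; C\left(\frac{2}{j}\right)^2.
\end{equation*}

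\emph{Main obstacle.} The technically delicate step is to justify the Szegő--Weinberger bound $\lambda_1^{N}(\Omega)\,|\Omega|\leq \pi j^2(1+o(1))$ for a small domain on a curved manifold, with an error $o(1)$ that is uniform in $\Omega$ (over all Neumann domains of all eigenfunctions $f_n$). The classical planar proof uses radial test functions supported on the area-equivalent disk, and one must show that the analogous construction on the manifold, combined with the metric estimate in normal coordinates, yields the required uniform error control. Alternatively, this inequality can be extracted from the proof of Proposition \ref{prop:Rho-upper-bound-manifold}(\ref{enu:prop-Rho-upper-bound-manifold-2}) in \cite{BanEggTay_arXiv17}, since that proof already encodes a Szegő--Weinberger-type comparison; one need only separate out the eigenvalue bound from the isoperimetric input used there to obtain the $\rho$-bound.
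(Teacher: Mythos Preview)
Your argument is essentially identical to the paper's: Szeg\H{o}--Weinberger on each Neumann domain with $N_\Omega(\lambda_n)=1$, sum, use the hypothesis, then Weyl. The only difference is that the paper applies the Szeg\H{o}--Weinberger inequality $\lambda_1(\Omega)|\Omega|\le\pi j^2$ directly without any curvature correction $(1+\varepsilon_n)$, so the ``main obstacle'' you flag is simply not addressed there; your diameter estimate $\mathrm{diam}(\Omega)=O(\lambda_n^{-1/2})$ is also not something the paper establishes or uses.
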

\begin{proof}
The Szeg\"o-Weinberger inequality \cite{Szego_jrma54,Weinberger_jrma56}
is $\lambda_{1}\left(\Omega\right)\left|\Omega\right|\leq\pi j^{2}$,
where $j$ is the first zero of the derivative of the $J_{1}$ Bessel
function. Consider an eigenfunction $f_{n}$ of $M$ corresponding
to an eigenvalue $\lambda_{n}$. For each Neumann domain $\Omega$
of $f_{n}$, for which $N_{\Omega}(\lambda_{n})=1$, we have $\lambda_{n}=\lambda_{1}(\Omega)$.
Combining the Szeg\"o-Weinberger inequality with the assumption in
the lemma gives
\[
\mu_{n}\pi j^{2}\geq\sum_{\overset{\Omega\textrm{ s.t. }}{N_{\Omega}(\lambda_{n})=1}}\pi j^{2}\geq\sum_{\overset{\Omega\textrm{ s.t. }}{N_{\Omega}(\lambda_{n})=1}}\lambda_{n}\left|\Omega\right|>C\lambda_{n}\left|M\right|.
\]
Applying Weyl asymptotics \cite{Weyl_1911} we get (\ref{eq:Neumann-count-ratio-lim-inf}).
\end{proof}
Such a result is interesting since it shows that the Neumann count
tends to infinity. Similar problems are investigated for the nodal
count. It was asked a few years ago by Hoffmann-Ostenhof whether $\limsup_{n\rightarrow\infty}\nu_{n}=\infty$
holds for any manifold \cite{Oberwolfach_report_12}. Following this,
Ghosh, Reznikov and Sarnak proved that the number of nodal domains
of Maass forms tends to infinity with the eigenvalue \cite{GhoRezSar_gafa13}.
Shortly afterwards, Jung and Zelditch have shown that for negatively
curved compact surfaces with some orientation-reversing isometric
involution, the number of nodal domains tends to infinity for a density
one sub-sequence of the eigenfunctions \cite{JunZel_jdg16}. Sequentially,
they improved upon this result by showing the same asymptotics for
non-positively surfaces without the need of an involution \cite{JunZel_mann16}.
The most recent result is by Zelditch who provided a logarithmic lower
bound for the nodal count of eigenfunctions on the first class of
manifolds mentioned above \cite{Zelditch_jst16}.

The validity of the inequality (\ref{eq:Neumann-count-ratio-lim-inf})
(and hence the validity of the assumption (\ref{eq:Assumption-on-Spectral-position}))
may be checked by investigating the distribution of $\frac{\mu_{n}}{n}$,
which is our next task.

We consider the separable eigenfunctions of the flat torus $\T$ with
fundamental domain $\R^{2}/\Z^{2}$. For those eigenfunctions we calculate
the limiting probability distribution of $\frac{\mu_{n}}{n}$.

Given a couple of natural numbers $m_{x},m_{y}\in\N$, we have that
\begin{equation}
f_{m_{x},m_{y}}(x,y)=\sin\left(2\pi m_{x}x\right)\cos\left(2\pi m_{y}y\right),\label{eq:Torus-eigenfunction-2}
\end{equation}
is a separable eigenfunction of the following eigenvalue
\begin{equation}
\lambda_{m_{x},m_{y}}:=4\pi^{2}\left(m_{x}^{2}+m_{y}^{2}\right),\label{eq:Torus-eigenvalue-2}
\end{equation}
(as in (\ref{eq:Torus-eigenvalue}),(\ref{eq:Torus-eigenfunction})).
Note that the functions $\cos\left(2\pi m_{x}x\right)\cos\left(2\pi m_{y}y\right)$,
$\cos\left(2\pi m_{x}x\right)\sin\left(2\pi m_{y}y\right)$, $\sin\left(2\pi m_{x}x\right)\sin\left(2\pi m_{y}y\right)$
together with (\ref{eq:Torus-eigenfunction-2}) are linearly independent
eigenfunctions which belong to the eigenvalue (\ref{eq:Torus-eigenvalue-2}).
The set of all those separable eigenfunctions for all possible values
of $m_{x},m_{y}\in\N$ form an orthogonal complete set of eigenfunctions
on $\mathbb{T}$.

We further note that the four eigenfunctions above which correspond
to a particular eigenvalue $\lambda_{m_{x},m_{y}}$ are equal on the
torus up to a translation. Hence, all four have the same number of
Neumann domains as $f_{m_{x},m_{y}}$ and we denote this number by
$\mu{}_{m_{x},m_{y}}$. With this we may define the following cumulative
distribution function
\begin{equation}
F_{\lambda}(c):=\frac{4}{N_{\T}(\lambda)}\left|\left\{ (m_{x},m_{y})\in\N^{2}~:~\lambda_{m_{x},m_{y}}<\lambda~~,~~\frac{\mu{}_{m_{x},m_{y}}}{N_{\T}(\lambda_{m_{x},m_{y}})}<c\right\} \right|,\label{eq:Cumulative-distribution-basic}
\end{equation}
where $N_{\T}(\lambda)$ is the spectral position of $\lambda$ in
the torus $\mathbb{T}$, as in (\ref{eq:Spectral-Position}), and
the factor $4$ stands for the four eigenfunctions which correspond
to $\lambda_{m_{x},m_{y}}$. In words, $F_{\lambda}(c)$ is the proportion
of the separable eigenfunctions with eigenvalue less than $\lambda$,
whose normalized Neumann count is smaller than $c$. Its limiting
distribution is given by the following.
\begin{prop}
\label{prop:Distribution-Neumann-Count-manifold}For $c<\frac{4}{\pi}$
\begin{equation}
\lim_{\lambda\to\infty}F_{\lambda}(c)=\frac{1}{2}\int_{0}^{c}\frac{1}{\sqrt{1-(\frac{\pi}{4}x)^{2}}}dx\label{eq:cumulative-dist-as-integral}
\end{equation}
and for $c\geq\frac{4}{\pi}$
\[
\lim_{\lambda\to\infty}F_{\lambda}(c)=1.
\]
\end{prop}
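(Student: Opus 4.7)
The plan is to combine an explicit formula for $\mu_{m_x,m_y}$ with an equidistribution argument for lattice points in the quarter plane.

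First, I would compute $\mu_{m_x,m_y}$ directly. Setting $\nabla f_{m_x,m_y} = 0$ splits the critical points into the extrema, where $|\sin(2\pi m_x x)| = |\cos(2\pi m_y y)| = 1$, and the saddles, where $\sin(2\pi m_x x) = \cos(2\pi m_y y) = 0$, yielding $|\Xt| = |\Sd| = 4 m_x m_y$. Since the gradient flow decouples into independent one-dimensional flows in $x$ and $y$, the stable and unstable manifolds of the saddles are horizontal and vertical segments on $\T$ which intersect transversally; hence $f_{m_x,m_y}$ is Morse--Smale and the equality case of (\ref{eq:upper_bound_for_Neumann_count}) gives
\[
\mu_{m_x,m_y} = 2|\Sd| = 8 m_x m_y.
\]

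Second, Weyl's law on the unit area torus gives $N_\T(\lambda) = \lambda/(4\pi) + O(\sqrt{\lambda})$, and together with $\lambda_{m_x,m_y} = 4\pi^2(m_x^2 + m_y^2)$ this yields, writing $(m_x,m_y) = r(\cos\theta,\sin\theta)$ with $\theta \in (0,\pi/2)$,
\[
\frac{\mu_{m_x,m_y}}{N_\T(\lambda_{m_x,m_y})} = \frac{8 m_x m_y}{\pi(m_x^2+m_y^2)}\bigl(1 + o(1)\bigr) = \frac{4\sin(2\theta)}{\pi}\bigl(1 + o(1)\bigr).
\]
In particular the ratio is bounded above by $4/\pi$, so $F_\lambda(c) \to 1$ for every $c \geq 4/\pi$.

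Third, the main step is angular equidistribution. Up to boundary contributions of size $O(\sqrt{\lambda})$ from pairs with $m_x = 0$ or $m_y = 0$, both the numerator and the denominator in (\ref{eq:Cumulative-distribution-basic}) reduce to counts of lattice points in the quarter disk $\{(m_x,m_y) \in \N^2 : m_x^2 + m_y^2 < \lambda/(4\pi^2)\}$, with the factors of $4$ cancelling. For $c < 4/\pi$, the condition $\frac{4\sin(2\theta)}{\pi} < c$ defines the angular set
\[
S_c = \bigl(0,\, \tfrac{1}{2}\arcsin(\tfrac{\pi c}{4})\bigr) \cup \bigl(\tfrac{\pi}{2} - \tfrac{1}{2}\arcsin(\tfrac{\pi c}{4}),\, \tfrac{\pi}{2}\bigr),
\]
of total Lebesgue measure $\arcsin(\pi c/4)$. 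A Gauss-style area comparison shows that lattice points in expanding quarter disks equidistribute in angle with respect to $\tfrac{2}{\pi}\,d\theta$ on $(0,\pi/2)$, whence
\[
\lim_{\lambda\to\infty} F_\lambda(c) = \frac{2}{\pi}\arcsin\!\left(\frac{\pi c}{4}\right),
\]
and the substitution $u = \pi x/4$ in $\arcsin(s) = \int_0^s (1-u^2)^{-1/2}\,du$ rewrites this as the integral on the right-hand side of (\ref{eq:cumulative-dist-as-integral}).

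The main technical obstacle is a uniform version of the angular equidistribution which controls the contribution from pairs $(m_x,m_y)$ of very small modulus and keeps the error term negligible compared with $N_\T(\lambda) \sim \lambda/(4\pi)$; this is handled by a standard Gauss-circle-type area estimate applied separately to each angular sector, combined with a dyadic decomposition to absorb the inner region.
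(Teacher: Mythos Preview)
Your proposal is correct and follows essentially the same route as the paper: compute $\mu_{m_x,m_y}=8m_xm_y$ via the Morse--Smale equality in (\ref{eq:upper_bound_for_Neumann_count}), express the ratio as $\tfrac{4}{\pi}\sin(2\theta)$ up to a Weyl error, and reduce to a lattice-point count in angular sectors of the expanding disk. The only cosmetic differences are that the paper first passes to the half-region $W=\{m_x<m_y\}$ (so a single sector appears rather than your symmetric pair $S_c$), and it handles the $o(1)$ correction by an explicit $\varepsilon$--$\Lambda$ sandwich (removing the finitely many points with $\lambda_{m_x,m_y}<\Lambda$) rather than a dyadic decomposition; your sketch of the error control is more than sufficient for the same conclusion.
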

\begin{proof}
The proof consists of a reduction to a lattice counting problem, which
allows to derive the limiting distribution. First, observe that the
number of Neumann domains of $f_{m_{x},m_{y}}$ is $\mu{}_{m_{x},m_{y}}=8m_{x}m_{y}$.
This holds since $f_{m_{x},m_{y}}$ is Morse-Smale, so that there
is an equality in (\ref{eq:upper_bound_for_Neumann_count}), and the
number of saddle points of $f_{m_{x},m_{y}}$ is the number nodal
crossings which is easily shown to be $4m_{x}m_{y}$. The symmetry
between $m_{x}$ and $m_{y}$ in the expression for $\mu{}_{m_{x},m_{y}}$
motivate us to define the set
\[
W:=\left\{ (m_{x},m_{y})\in\N^{2}~:~m_{x}<m_{y}\right\} ,
\]
and observe
\begin{align}
\forall\lambda\quad\left|\left\{ (m_{x},m_{y})\in\N^{2}:~\lambda_{m_{x},m_{y}}<\lambda\right\} \right|= & 2\thinspace\left|\left\{ (m_{x},m_{y})\in W~:~\lambda_{m_{x},m_{y}}<\lambda\right\} \right|\nonumber \\
 & +\left|\left\{ (m_{x},m_{y})\in\N^{2}~:~m_{x}=m_{y}\textrm{ and }\lambda_{m_{x},m_{y}}<\lambda\right\} \right|\label{eq:Lattice-equivalence}
\end{align}
Plugging (\ref{eq:Lattice-equivalence}) in (\ref{eq:Cumulative-distribution-basic})
and taking the limit $\lambda\rightarrow\infty$ gives
\begin{equation}
\lim_{\lambda\to\infty}F_{\lambda}(c)=\lim_{\lambda\to\infty}\frac{8}{N_{\T}(\lambda)}\left|\left\{ (m_{x},m_{y})\in W~:~\lambda_{m_{x},m_{y}}<\lambda~~,~~\frac{\mu{}_{m_{x},m_{y}}}{N_{\T}(\lambda_{m_{x},m_{y}})}<c\right\} \right|,\label{eq:Cumulative-distribution-lattice}
\end{equation}
where we use the Weyl asymptotics, $\lim_{\lambda\rightarrow\infty}N_{\T}(\lambda)=\frac{\lambda}{4\pi}$
\cite{Weyl_1911} and that the second term in the right hand side
of (\ref{eq:Lattice-equivalence}) grows like $\sqrt{\lambda}$ and
hence drops when taking the limit.

We analyze (\ref{eq:Cumulative-distribution-lattice}) geometrically.
First, $N_{\T}(\lambda)$ counts the number of $\Z^{2}$ points with
non-zero coordinates, that lie inside a disc of radius $\sqrt{\lambda}$
around the origin. Hence, it may be written as
\begin{equation}
N_{\T}(\lambda_{m_{x},m_{y}})=\pi(m_{x}^{2}+m_{y}^{2})+Err(m_{x}^{2}+m_{y}^{2}),\label{eq:Counting-asymptotics}
\end{equation}
 where $Err(m_{x}^{2}+m_{y}^{2})=o(m_{x}^{2}+m_{y}^{2})$ \cite{Huxley_latticebook}.
In addition, the point $(m_{x},m_{y})\in W$ may be characterized
by the angle it makes with the $x$-axis, i.e., $\frac{m_{y}}{m_{x}}=\tan\theta_{m_{x,},m_{y}}$,
so that
\begin{equation}
\frac{2m_{x}m_{y}}{m_{x}^{2}+m_{y}^{2}}=2\cos\theta_{m_{x,},m_{y}}\cdot\sin\theta_{m_{x,},m_{y}}=\sin2\theta_{m_{x,},m_{y}}.\label{eq:theta_identity}
\end{equation}
With (\ref{eq:Counting-asymptotics}) and (\ref{eq:theta_identity})
we may write
\begin{align*}
\frac{\mu_{m_{x},m_{y}}}{N_{\T}(\lambda_{m_{x},m_{y}})} & =\frac{8m_{x}m_{y}}{\pi(m_{x}^{2}+m_{y}^{2})\left(1+Err(m_{x}^{2}+m_{y}^{2})/\pi(m_{x}^{2}+m_{y}^{2})\right)}\\
 & =\frac{1}{\left(1+Err(m_{x}^{2}+m_{y}^{2})/\pi(m_{x}^{2}+m_{y}^{2})\right)}\frac{4}{\pi}\cdot\sin2\theta_{m_{x,},m_{y}}.
\end{align*}

Let $\varepsilon>0$. Since $Err(m_{x}^{2}+m_{y}^{2})=o(m_{x}^{2}+m_{y}^{2})$,
there exists $\Lambda>0$ such that for all $(m_{x},m_{y})\in W$
satisfying $4\pi^{2}\left(m_{x}^{2}+m_{y}^{2}\right)>\Lambda$, the
following holds
\begin{equation}
\frac{1}{1+\varepsilon}\frac{4}{\pi}\sin2\theta_{m_{x,},m_{y}}<\frac{\mu_{m_{x},m_{y}}}{N_{\T}(\lambda_{m_{x},m_{y}})}<\frac{1}{1-\varepsilon}\frac{4}{\pi}\sin2\theta_{m_{x,},m_{y}}.\label{eq:normalized neumann count sandwich}
\end{equation}
The limiting cumulative distribution (\ref{eq:Cumulative-distribution-lattice})
may be slightly rewritten as
\begin{align*}
\lim_{\lambda\to\infty}F_{\lambda}(c) & =\lim_{\lambda\to\infty}\frac{8}{N_{\T}(\lambda)}\left|\left\{ (m_{x},m_{y})\in W~:~\Lambda<\lambda_{m_{x},m_{y}}<\lambda~~,~~\frac{\mu{}_{m_{x},m_{y}}}{N_{\T}(\lambda_{m_{x},m_{y}})}<c\right\} \right|,
\end{align*}
where the additional condition $\Lambda<\lambda_{m_{x},m_{y}}$ removes
only a finite number of points from the set and does not affect the
limit. We may now use (\ref{eq:normalized neumann count sandwich})
to get the following inequalities by set inclusion

\begin{align}
\lim_{\lambda\to\infty}F_{\lambda}(c) & \leq\label{eq:upper bound commulative}\\
\lim_{\lambda\to\infty}\frac{8}{N_{\T}(\lambda)} & \left|\left\{ (m_{x},m_{y})\in W\thinspace:\thinspace\Lambda<\lambda_{m_{x},m_{y}}<\lambda~\mathrm{and}~\theta_{m_{x,},m_{y}}<\frac{1}{2}\arcsin\left(\frac{\pi c(1+\varepsilon)}{4}\right)\right\} \right|,\nonumber
\end{align}
and

\begin{align}
\lim_{\lambda\to\infty}F_{\lambda}(c) & \geq\label{eq:lower bound commulative}\\
\lim_{\lambda\to\infty}\frac{8}{N_{\T}(\lambda)} & \left|\left\{ (m_{x},m_{y})\in W\thinspace:\thinspace\Lambda<\lambda_{m_{x},m_{y}}<\lambda~\mathrm{and}~\theta_{m_{x,},m_{y}}<\frac{1}{2}\arcsin\left(\frac{\pi c(1-\varepsilon)}{4}\right)\right\} \right|,\nonumber
\end{align}
where in the above we assume that $0\leq c<\frac{4}{\pi}$ and $\varepsilon$
is small enough so that $\pi c(1+\varepsilon)/4\leq1$, and in particular
$\arcsin(\pi c(1+\varepsilon)/4)$ is well defined.

We notice that the right hand sides of (\ref{eq:upper bound commulative})
and (\ref{eq:lower bound commulative}) correspond to counting integer
lattice points which are contained within a certain sector. This number
of points grows like the area of the corresponding sector \cite{Huxley_latticebook},
i.e.,
\begin{align}
\left|\left\{ (m_{x},m_{y})\in W~:~\Lambda<\lambda_{m_{x},m_{y}}<\lambda~~,~~\theta_{m_{x},n_{y}}<\frac{1}{2}\arcsin\left(\frac{\pi c(1\pm\varepsilon)}{4}\right)\right\} \right| & =\nonumber \\
=\underbrace{\frac{1}{4}\arcsin\left(\frac{\pi c(1\pm\varepsilon)}{4}\right)\frac{\lambda-\Lambda}{4\pi^{2}}}_{\text{area of a sector}}+o(\lambda).\label{eq:Sector-counting}
\end{align}

Plugging (\ref{eq:Sector-counting}) in the bounds (\ref{eq:upper bound commulative}),(\ref{eq:lower bound commulative})
and using (\ref{eq:Counting-asymptotics}) gives
\begin{align*}
\frac{2}{\pi}\arcsin\left(\frac{\pi c(1-\varepsilon)}{4}\right)\leq\lim_{\lambda\to\infty}F_{\lambda}(c)\leq\frac{2}{\pi}\arcsin\left(\frac{\pi c(1+\varepsilon)}{4}\right).
\end{align*}
As $\varepsilon>0$ is arbitrary we get
\begin{align*}
\forall c<\frac{4}{\pi}\quad\lim_{\lambda\to\infty}F_{\lambda}(c) & =\frac{2}{\pi}\arcsin\left(\frac{\pi c}{4}\right),\\
 & =\frac{1}{2}\int_{0}^{c}\frac{1}{\sqrt{1-(\frac{\pi}{4}x)^{2}}}dx,
\end{align*}
which proves (\ref{eq:cumulative-dist-as-integral}). Finally note
that we have $\lim_{c\rightarrow\frac{4}{\pi}}\lim_{\lambda\to\infty}F_{\lambda}(c)=1$,
and as $F_{\lambda}(c)$ is a cumulative distribution function we
get $\lim_{\lambda\to\infty}F_{\lambda}(c)=1$ for $c\geq\frac{4}{\pi}.$
\end{proof}
\begin{rem*}
The calculation in the proof above may be considered as a particular
case of those done in \cite{GnuLoi_jpa13}. The proof here is explicitly
tailored for the purpose of the current paper.
\end{rem*}
The next figure shows the probability distribution given in (\ref{eq:cumulative-dist-as-integral})
and compares it to a numerical examination of the probability distribution
of $\frac{\mu_{n}}{n}$ for the separable eigenfunctions on the torus.

\begin{figure}[h]
\centering{}\includegraphics[width=1\textwidth]{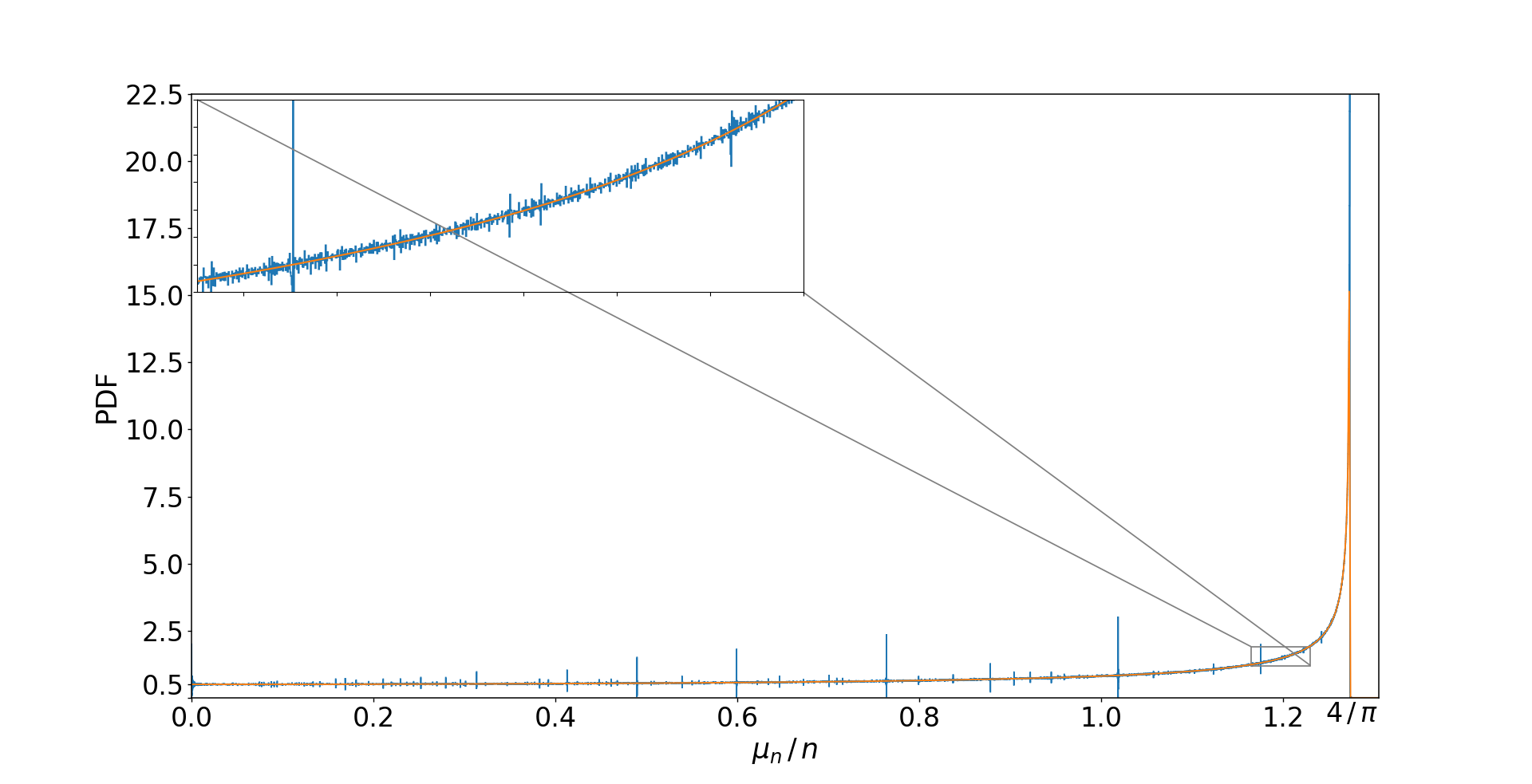} \caption{\emph{Orange curve}: the probability distribution of $\frac{\mu_{n}}{n}$
as given in (\ref{eq:cumulative-dist-as-integral}). \emph{Blue curve}:
a numerical calculation of this distribution as calculated for the
first $2\cdot10^{8}$ torus eigenfunctions.}
\label{fig:num5aa}
\end{figure}

Examining the $\frac{\mu_{n}}{n}$ distribution leads to the following.
First, we note that $\frac{\mu_{n}}{n}$ may get arbitrarily low values
for a positive proportion of the eigenfunctions. This is in contradiction
with (\ref{eq:Neumann-count-ratio-lim-inf}) and therefore we conclude
that the separable eigenfunctions on the flat torus do not satisfy
assumption (\ref{eq:Assumption-on-Spectral-position}) in Lemma \ref{lem:Neumann-count-asymptotics}.
Indeed, it can be checked directly in this case that the total area
of all star domains goes to zero as the eigenvalue $\lambda_{a,b}$
tends to infinity. Therefore their total area does not satisfy (\ref{eq:Assumption-on-Spectral-position}).

Turning our attention to the higher values of $\frac{\mu_{n}}{n}$
we notice that $\frac{\mu_{n}}{n}>1$ for a positive proportion of
the eigenfunction. This means that the strict Courant bound does not
apply to the Neumann domain count. Yet, we ask whether an upper bound
of the form $\mu_{n}\leq h(n)$ exists, with $h$ being possibly a
linear function. If so, a possible upper bound might be suggested
by the separable eigenfunctions, for which $h(n)=\frac{4}{\pi}n$,
serves as a bound. Furthermore, the Courant-like bound on the Neumann
count for metric graphs (\ref{eq:Neumann-Surplus-bounds}), to be
discussed in the next part, is a suggestive evidence for the existence
of a Courant-like bound in the case of manifolds\footnote{While writing this manuscript we became aware of a work in progress
by Buhovski, Logunov, Nazarov and M. Sodin, which might disprove the
existence of such a bound.}.

\vspace{24mm}

\part{Neumann domains on metric graphs}

\vspace{8mm}

\section{Definitions\label{sec:Definitions-graphs}}

\subsection{Discrete graphs and graph topologies\label{subsec:discrete_graphs_introduction}}

We denote by $\Gamma=\left(\V,\E\right)$ a connected graph with finite
sets of vertices $\V$ and edges $\E$. We allow the graph edges to
connect either two distinct vertices or a vertex to itself. In the
latter case, such an edge is called a loop.

For a vertex $v\in\V$, its \emph{degree, $d_{v}$,} equals the number
of edges connected to it. The set of graph vertices of degree one
turns out to be useful and we denote it by
\[
\partial\Gamma:=\left\{ v\in\V\thinspace:\,d_{v}=1\right\} .
\]
We call the vertices in $\partial\Gamma$, \emph{boundary vertices}
and the rest of the vertices, $\V\backslash\partial\Gamma$, are called
\emph{interior vertices}.

An important topological quantity of graphs is the first Betti number
(dimension of the first homology group) given, for a connected graph,
by
\begin{equation}
\beta:=\left|\E\right|-\left|\V\right|+1.\label{eq:Betti_number}
\end{equation}
The value of $\beta$ is the number cycles needed to span the space
of cycles on the graph. By definition a graph is simply connected
when $\beta=0$, and such a graph is called a tree graph. Two particular
examples of trees are star graphs and path graphs. A star graph is
a graph with one interior vertex which is connected by edges to the
other $\left|\V\right|-1$ boundary vertices. A path graph is a connected
graph with two boundary vertices and $\left|\V\right|-2$ interior
vertices which are all of degree two. The path graph which shows up
later in this paper is the simplest graph of only two vertices connected
by a single edge.

\subsection{Spectral theory of metric graphs\label{subsec:Spectral-theory-of-Quantum-Graphs}}

A \emph{metric graph} is a discrete graph for which each edge, $e\in\E$,
is identified with a one-dimensional interval $[0,L_{e}]$ of a positive
finite length $L_{e}$. We assign to each edge $e\in\E$ a coordinate,
$x_{e}$, which measures the distance along the edge from one of the
two boundary vertices of $e$.

A function on the graph is described by its restrictions to the edges,
$\left\{ \left.f\right|_{e}\right\} _{e\in\E}$, where $\left.f\right|_{e}:\left[0,L_{e}\right]\rightarrow\C$.
We equip the metric graphs with a self-adjoint differential operator,
\begin{equation}
-\Delta\ :\ \left.f\right|_{e}(x_{e})\mapsto-\frac{\ud^{2}}{\ud x_{e}^{2}}\left.f\right|_{e}\left(x_{e}\right),\label{eq:metric_Schroedinger}
\end{equation}
which is the Laplacian\footnote{More general operators appear in the literature. See for example \cite{BerKuc_graphs,GnuSmi_ap06}.}.
It is most common to call this setting of a metric graph and an operator
by the name quantum graph.

To complete the definition of the operator we need to specify its
domain. We consider functions which belong to the following direct
sum of Sobolev spaces
\begin{equation}
H^{2}(\metgraph):=\bigoplus_{e\in\E}H^{2}([0,L_{e}])\ .\label{eq:Sobolev_on_graph}
\end{equation}
In addition we require some matching conditions on the graph vertices.
A function $f\in H^{2}(\metgraph)$ is said to satisfy the Neumann
vertex conditions at a vertex $v$ if
\begin{enumerate}
\item $f$ is continuous at $v\in\V$, i.e.,
\begin{equation}
\forall e_{1},e_{2}\in\E_{v\,\,\,\,\,}\left.f\right|_{e_{1}}(0)=\left.f\right|_{e_{2}}(0),\label{eq:Neumann_continuity}
\end{equation}
where $\E_{v}$ is the set of edges connected to $v$, and for all
$e\in\E_{v}$, $x_{e}=0$ at $v$.
\item The outgoing derivatives of $f$ at $v$ satisfy
\begin{equation}
\sum_{e\in\E_{v}}\left.\frac{\ud f}{\ud x_{e}}\right|_{e}\left(0\right)=0.\label{eq:Neumann_deriv_conditions}
\end{equation}
\end{enumerate}
Requiring these conditions at each vertex leads to the operator \eqref{eq:metric_Schroedinger}
being self-adjoint and its spectrum being real and bounded from below
\cite{BerKuc_graphs}. In addition, since we only consider compact
graphs, the spectrum is discrete. We number the eigenvalues in the
ascending order and denote them by $\left\{ \lambda_{n}\right\} _{n=0}^{\infty}$
and their corresponding eigenfunctions by $\left\{ f_{n}\right\} _{n=0}^{\infty}$.
As the operator is both real and self-adjoint, we may choose the eigenfunctions
to be real, which we will always do.

In this paper, we only consider graphs whose vertex conditions are
Neumann at all vertices, and call those \emph{standard graphs.} A
special attention should be given to vertices of degree two. Introducing
such a vertex at the interior of an existing edge (thus splitting
this edge into two) and requiring Neumann conditions at this vertex
does not change the eigenvalues and eigenfunctions of the graph. The
same holds when removing a degree two vertex and uniting two existing
edges into one. This spectral invariance allows us to assume in the
following that standard graphs do not have any vertices of degree
two. Furthermore, the only graph, all of whose vertices are of degree
two (or equivalently has no vertices at all) is the single loop graph.
We assume throughout the paper that our graphs are different than
the single loop graph and call those \emph{nontrivial graphs}.

The spectrum of a standard graph is non-negative, which means that
we may represent the spectrum by the non-negative square roots of
the eigenvalues, $k_{n}=\sqrt{\lambda_{n}}$. For convenience we abuse
terminology and call also $\left\{ k_{n}\right\} _{n=0}^{\infty}$
the eigenvalues of the graph. Most of the results and proofs in this
part are expressed in terms of those eigenvalues. A Neumann graph
has $k_{0}=0$ with multiplicity which equals the number of graph
components. The common convention is that if an eigenvalue is degenerate
(i.e. non simple) it appears more than once in the sequence $\left\{ k_{n}\right\} _{n=0}^{\infty}$.
For any such degenerate eigenvalue, we pick a basis for its eigenspace
and all members of this basis appear in the sequence $\left\{ f_{n}\right\} _{n=0}^{\infty}$.
Obviously, this makes the choice of the sequence $\left\{ f_{n}\right\} _{n=0}^{\infty}$
non unique. It is important to note that all the statements to follow
hold for any choice of $\left\{ f_{n}\right\} _{n=0}^{\infty}$.

\subsection{Neumann points and Neumann domains}

For metric graphs, the nodal point set of a function is the set of
points at which the function vanishes. Removing the nodal point set
from the graph, splits it into connected components and those are
called nodal domains. The Neumann set and Neumann domains are similarly
defined, but before doing so we need to restrict to particular classes
of functions.
\begin{defn}
\label{def:Morse-and-Generic}Let $\Gamma$ be a nontrivial standard
graph and $f$ be an eigenfunction of $\Gamma$.
\end{defn}
\begin{enumerate}
\item We call $f$ a \emph{Morse eigenfunction} if for each edge $e$, $\left.f\right|_{e}$
is a Morse function. Namely, at no point in the interior of $e$ both
the first and the second derivatives of $f$ vanish.
\item We call an eigenfunction $f$ \emph{generic} if it is a Morse eigenfunction
and in addition satisfies all of the following:
\begin{enumerate}
\item $f$ corresponds to a simple eigenvalue.
\item $f$ does not vanish at any vertex.
\item $f$ has no extremal points at interior vertices.
\end{enumerate}
\end{enumerate}
An equivalent characterization of a Morse eigenfunction is
\begin{lem}
\label{lem:Morse-equiv-non-zero}Let $f$ be a non-constant eigenfunction.
$f$ is Morse if and only if there exists no edge $e$ such that $\left.f\right|_{e}\equiv0$.
\end{lem}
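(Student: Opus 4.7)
The plan is to prove the equivalence by proving both contrapositives, with the eigenfunction equation $-f''=\lambda f$ on each edge playing the central role.

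First, for the easy direction ($\Leftarrow$ as stated, i.e., non-Morse implies there exists a vanishing edge), suppose $f$ is \emph{not} Morse, so there is an edge $e$ and an interior point $x_{0}\in e$ at which both $f'(x_{0})=0$ and $f''(x_{0})=0$. Since $f$ satisfies $-f''=\lambda f$ on the interior of $e$, the vanishing of $f''$ at $x_{0}$ yields $\lambda f(x_{0})=0$. I would then observe that $\lambda\neq 0$: indeed, if $\lambda=0$ then $f''\equiv 0$ on every edge, so $f$ is affine on each edge; combined with continuity at each vertex and the fact that the kernel of the standard Laplacian on a connected graph is spanned by the constant function, this would force $f$ to be constant, contradicting the hypothesis. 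Therefore $f(x_{0})=0$, and with $f'(x_{0})=0$ the uniqueness theorem for the linear ODE $-f''=\lambda f$ on $e$ (with initial data at $x_{0}$) gives $f|_{e}\equiv 0$, producing the desired vanishing edge.

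For the other direction ($\Rightarrow$), I use the trivial contrapositive: if there exists an edge $e$ with $f|_{e}\equiv 0$, then at every interior point of $e$ both derivatives vanish identically, hence $f|_{e}$ is not a Morse function on $[0,L_{e}]$, so $f$ is not Morse in the sense of Definition \ref{def:Morse-and-Generic}.

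The only substantive step is the argument that forces $\lambda\neq 0$ in the first direction; once this is in hand the rest reduces to the uniqueness theorem for a second-order linear ODE. No obstacle of real difficulty is expected, but I should be careful to record explicitly that the hypothesis ``non-constant eigenfunction'' (together with connectedness of $\Gamma$ from Section \ref{subsec:discrete_graphs_introduction}) is precisely what rules out the degenerate case $\lambda=0$, in which $f$ could be affine and non-zero on each edge without being identically zero on any single edge.
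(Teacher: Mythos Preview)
Your proof is correct and follows essentially the same approach as the paper: both use the eigenfunction equation $-f''=\lambda f$ to pass between vanishing of $f$ and of $f''$, and then invoke ODE uniqueness to conclude $f|_{e}\equiv 0$ from $f(x_{0})=f'(x_{0})=0$. The only cosmetic differences are that you argue both directions via contrapositives (making the ``vanishing edge $\Rightarrow$ non-Morse'' direction immediate), and you are more explicit than the paper about why $\lambda\neq 0$ under the non-constant hypothesis.
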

\begin{proof}
First, observe that a non-constant eigenfunction of the Laplacian
vanishes at an interior point of an edge if and only if the second
derivative vanishes at that point. Therefore, if $f$ is a Morse eigenfunction
then there is no interior point at which both the function and its
derivative vanish. This means that a Morse eigenfunction cannot vanish
entirely at a graph edge. As for the converse, if $f$ is a non-Morse
eigenfunction then there exists $\bs x$, an interior point of an
edge $e$, such that $f|_{e}'(\bs x)=f|_{e}''(\bs x)=0$. By the same
argument as above, this means that either $f|_{e}(\bs x)=0$ or $f|_{e}$
is the constant eigenfunction. The vanishing of $f|_{e}$ and its
first derivative at the same point, together with $f|_{e}$ being
a solution of an ordinary differential equation of second order implies
$\left.f\right|_{e}\equiv0$.
\end{proof}
We complement this lemma and note that the constant eigenfunction,
corresponding to $k_{0}=0$ is not a Morse function. This, together
with the lemma, implies that a Morse eigenfunction may vanish only
at isolated points of the graph; the same holds for its derivative.
This quality allows the following.
\begin{defn}
\label{def:Neumann-point-Neumann-Domain}Let $f$ be a Morse eigenfunction.
\end{defn}
\begin{enumerate}
\item A Neumann point of $f$ is an extremal point (maximum or minimum)
not located at a boundary vertex. Namely, the set of Neumann points
is
\begin{equation}
\Nd:=\left\{ \bs x\in\Gamma\backslash\partial\Gamma\thinspace:\thinspace\bs x\textrm{ is an extremal point of }f\right\} .\label{eq:Neumann-point-set}
\end{equation}
Note that we reuse here the notation for the Neumann lines in the
manifold case, (\ref{eq:Neumann-line-set}).
\item A Neumann domain of $f$ is a closure of a connected component of
$\Gamma\backslash\Nd$. The closure is done by adding vertices of
degree one at the open endpoints of the connected component.
\end{enumerate}
Figure \ref{fig:Neumann domain example} shows the Neumann point and
Neumann domains of a particular eigenfunction.
\begin{figure}[h]
\centering{}(i)$\quad$\includegraphics[width=0.2\textwidth]{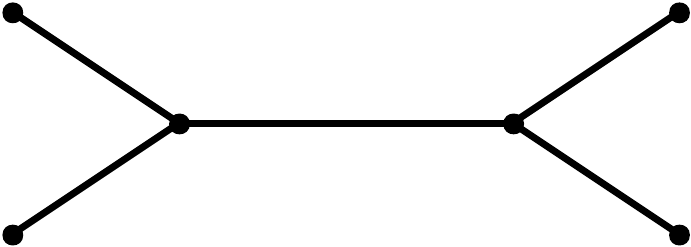}
\hfill{}(ii)\includegraphics[width=0.3\textwidth]{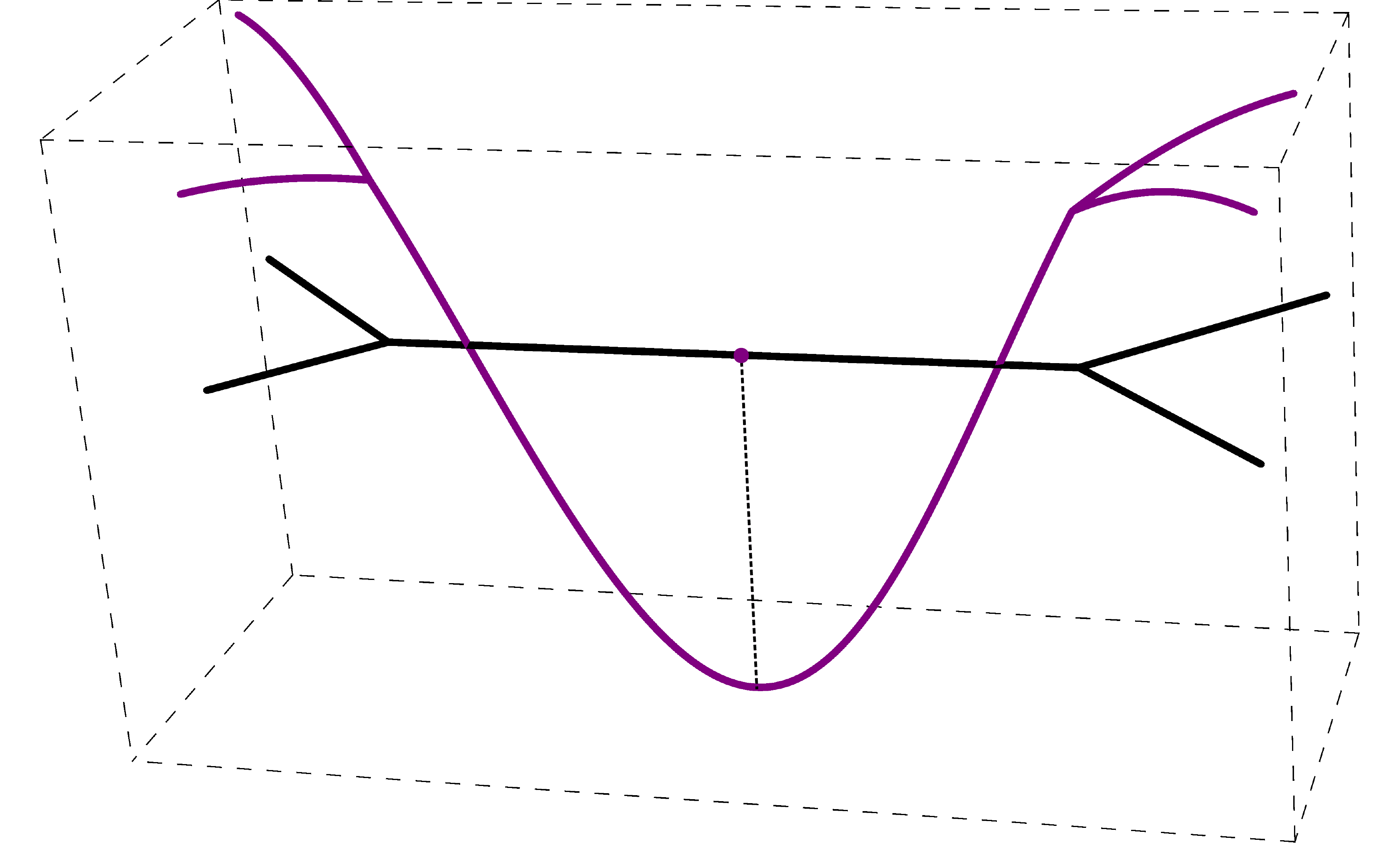} \hfill{}(iii)\includegraphics[width=0.3\textwidth]{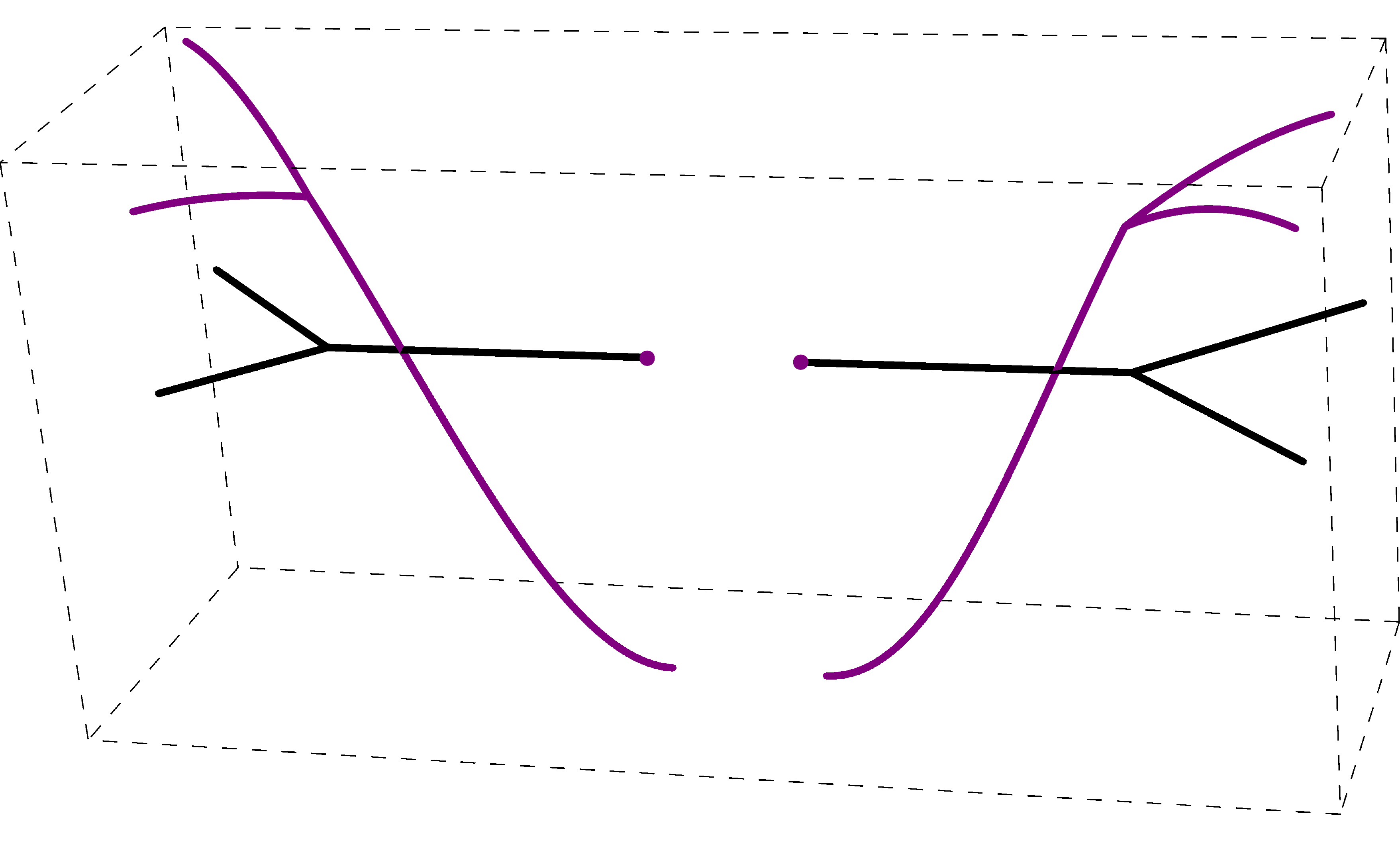}
\caption{(i) A graph $\Gamma$ (ii) An eigenfunction $f$ of $\Gamma$, with
its single Neumann point marked (iii) A decomposition of $\Gamma$
into the Neumann domains of $f$.}
\label{fig:Neumann domain example}
\end{figure}

\begin{rem*}
~
\begin{enumerate}
\item The definition implies that a Neumann point is either a point $\bs x\in\Gamma\backslash\V$
at some interior of an edge such that $f'(\bs x)=0$, or it is a vertex
$v\in\V$ such that all outgoing derivatives of $f$ at that vertex
vanish. The latter possibility does not occur if $f$ is generic.
\item From the proof of Lemma \ref{lem:Morse-equiv-non-zero} we learn that
no point can be both a nodal point and a Neumann point.
\end{enumerate}
\end{rem*}
All the results to follow concerning Neumann points and Neumann domains
are stated for either Morse or generic eigenfunctions. We start by
stating what proportion of the eigenfunctions are Morse and which
proportion of the Morse ones are generic. In order to do so, we need
to assume that the set of edge lengths is linearly independent over
the field $\Q$. We call such lengths \emph{rationally independent}
and we will assume this for the graph edge lengths in some of the
propositions to follow.
\begin{prop}
\cite{AloBan_Neumann,AloBanBer_cmp18}\label{prop:Proportion-Morse-generic}
Let $\Gamma$ be a nontrivial standard graph, with rationally independent
edge lengths $\left\{ L_{e}\right\} _{e\in\E}$. Let $\left\{ f_{n}\right\} _{n=0}^{\infty}$
be a complete set of eigenfunctions of $\Gamma$.
\begin{enumerate}
\item \label{enu:prop-Proportion-Morse-generic-1}The proportion of Morse
eigenfunctions is given by
\begin{equation}
\dens:=\lim_{N\rightarrow\infty}\frac{\left|\left\{ n\le N\,:\,f_{n}\,\textrm{is Morse}\right\} \right|}{N}=1-\frac{1}{2}\frac{\sum_{e\in\E_{L}}L_{e}}{\sum_{e\in\E}L_{e}},\label{eq:proportion-of-Morse}
\end{equation}
where $\E$ is the set of graph edges and $\E_{L}$ is a subset of
$\E$ consisting of all edges which form loops (edges which connect
a vertex to itself).
\item \label{enu:prop-Proportion-Morse-generic-2}The proportion of generic
eigenfunctions out of the Morse ones is
\begin{equation}
\lim_{N\rightarrow\infty}\frac{\left|\left\{ n\le N\,:\,f_{n}\,\textrm{is generic}\right\} \right|}{\left|\left\{ n\le N\,:\,f_{n}\,\textrm{is Morse}\right\} \right|}=\frac{1}{\dens}\lim_{N\rightarrow\infty}\frac{\left|\left\{ n\le N\,:\,f_{n}\,\textrm{is generic}\right\} \right|}{N}=1.\label{eq:proportion-of-generic}
\end{equation}
Namely, almost all Morse eigenfunctions are generic.
\end{enumerate}
\end{prop}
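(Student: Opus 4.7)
The strategy for Part~\ref{enu:prop-Proportion-Morse-generic-1} is to use Lemma~\ref{lem:Morse-equiv-non-zero} to reduce the computation of $\dens$ to counting those eigenvalues that admit an eigenfunction vanishing identically on at least one edge. For each non-Morse eigenfunction I would record the set $S\subseteq\E$ of edges on which it vanishes and analyse the eigenvalue problem on the closure of $\Gamma\setminus\bigcup_{e\in S}e$. At every vertex $v$ adjacent to some $e\in S$ the continuity condition forces $f(v)=0$ (a Dirichlet-type constraint), while the edges in $S$ contribute no outgoing derivative, so the Neumann condition must still hold on the remaining edges. Each configuration $S$ therefore corresponds to an eigenvalue problem with \emph{two} linear conditions at each such vertex, rather than the usual one.

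Next I would single out the configurations expected to carry positive asymptotic density in the spectrum, namely the ``loop-only'' ones, $S=\E\setminus\{e\}$ with $e\in\E_{L}$ a loop at some vertex $v$ of length $L_{e}$. Writing $f|_{e}(x)=A\cos(kx)+B\sin(kx)$ on $[0,L_{e}]$ and imposing continuity with zero on the other edges ($A=0$ and $A\cos(kL_{e})+B\sin(kL_{e})=0$) together with a vanishing net outgoing derivative at $v$ yields $\sin(kL_{e})=0$ and $1-\cos(kL_{e})=0$, and hence $k=\tfrac{2m\pi}{L_{e}}$ for $m\in\N$. Each loop thus contributes a sequence of non-Morse eigenvalues of asymptotic density $\tfrac{L_{e}}{2\pi}$; summing over loops and dividing by the Weyl density $L/\pi$, with $L=\sum_{e\in\E}L_{e}$, yields exactly the claimed formula for $\dens$.

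The main obstacle is to show that every other choice of $S$ contributes zero density. I would set this up via the secular-function / phase-torus framework of \cite{AloBanBer_cmp18}: for non-loop $S$ the over-determined Dirichlet\,$+$\,Neumann conditions at each cut vertex can be encoded as algebraic equations on the phase vector $(kL_{e}\bmod 2\pi)_{e\in\E}\in\torus$ that cut out a proper subvariety. The rational independence of $\{L_{e}\}_{e\in\E}$ then implies, via a Weyl-type equidistribution statement for the phases associated to the eigenvalues $\{k_{n}\}$, that the proportion of eigenvalues whose phase vector lies on this subvariety tends to zero.

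For Part~\ref{enu:prop-Proportion-Morse-generic-2} my plan is to show that each of the three additional conditions in Definition~\ref{def:Morse-and-Generic}(2) fails only on a density-zero subsequence of the Morse spectrum. Simplicity of $\lambda_{n}$ corresponds to avoiding double zeros of the secular function, which is a single analytic condition on $\torus$. Non-vanishing at a prescribed vertex $v$ is again a single linear equation in the boundary values of $f_{n}$, and so another codimension-one condition on $\torus$. Absence of an extremum at an interior vertex $v$ translates into the requirement that not all $d_{v}\ge 3$ outgoing derivatives of $f_{n}$ vanish there, which is a thinner condition still. In each case equidistribution on $\torus$ forces density zero, and a union bound over the finite vertex set preserves this. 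The technical heart of the argument, in both parts, is the precise secular-function analysis carried out in \cite{AloBan_Neumann,AloBanBer_cmp18}.
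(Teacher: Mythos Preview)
Your sketch is correct and follows the same route the paper itself defers to: the paper does not prove the proposition in the text but points to Proposition~A.1 of \cite{AloBanBer_cmp18} for Part~\ref{enu:prop-Proportion-Morse-generic-1} and to \cite{AloBan_Neumann} for Part~\ref{enu:prop-Proportion-Morse-generic-2}, and those arguments use precisely the ingredients you describe --- Lemma~\ref{lem:Morse-equiv-non-zero} to identify non-Morse eigenfunctions with those supported on a loop, the explicit loop spectrum $k=2m\pi/L_{e}$, and Barra--Gaspard--type equidistribution of $(k_{n}L_{e})_{e}$ on the secular manifold to show that all remaining non-generic configurations have density zero. The only minor imprecision is that the equidistribution takes place on the secular hypersurface in $\torus$ with respect to the Barra--Gaspard measure rather than on the full torus, but you already flag that the technical heart lies in the secular-function analysis of the cited references.
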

\begin{rem*}
~
\begin{enumerate}
\item The limits in (\ref{eq:proportion-of-Morse}) and (\ref{eq:proportion-of-generic})
exist even without assuming that the edge lengths are rationally independent.
This assumption is needed to obtain the exact values of those limits.
\item From the proposition we get that at least half of the eigenfunctions
are Morse, and if a graph has no loops, almost all eigenfunctions
are Morse and generic.
\item The proof of (\ref{eq:proportion-of-Morse}) is similar to the proof
of proposition A.1 in \cite{AloBanBer_cmp18}. The proof of (\ref{eq:proportion-of-generic})
appears in \cite{AloBan_Neumann}.
\end{enumerate}
\end{rem*}

\section{Topology of $\Omega$ and topography of $\left.f\right|_{\Omega}$\label{sec:Topology-and-Topography-Graphs}}

Let $\Gamma$ be a nontrivial standard graph and $f$ an eigenfunction
of $\Gamma$ corresponding to the eigenvalue $k$. Formally, every
Neumann domain $\Omega$ of $f$ may be considered as a subgraph of
$\Gamma$, if we add degree two vertices to $\Gamma$ at all the Neumann
points of $f$ (see discussion on those vertices in Section \ref{subsec:Spectral-theory-of-Quantum-Graphs}).
In particular, a Neumann domain is a closed set (by Definition \ref{def:Neumann-point-Neumann-Domain}).
This difference from the manifold case (where Neumann domains are
open sets) is technical and serves our need to consider $\Omega$
as a metric graph on its own. Being a metric graph, we take the usual
Laplacian on $\Omega$ and impose Neumann vertex conditions at all
of its vertices, so that $\Omega$ is considered as a standard graph.
Note that the restriction of $f|_{\Omega}$ to the edges of $\Omega$
trivially satisfies $f''=-k^{2}f$. It also obeys Neumann vertex conditions
at all vertices of $\Omega$, as each vertex is either a vertex of
$\Gamma$ or a point $\bs x\in\Gamma$ in an interior of an edge for
which $f'(\bs x)=0$. This gives the following, which is analogous
to Lemma \ref{lem:Restriction-is-Neumann-eigenfunction-manifold}.
\begin{lem}
\label{lem:Restriction-is-Neumann-eigenfunction-graph}$\left.f\right|_{\Omega}$
is an eigenfunction of the standard graph $\Omega$ and corresponds
to the eigenvalue $k$.
\end{lem}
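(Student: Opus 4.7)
The plan is to verify that $f|_{\Omega}$ meets the three defining requirements of being an eigenfunction of the standard graph $\Omega$ with eigenvalue $k$: (i) edgewise $H^{2}$-regularity with $-(f|_{\Omega})''=k^{2}f|_{\Omega}$ on each edge, (ii) continuity at every vertex of $\Omega$, and (iii) the zero-sum Neumann derivative condition \eqref{eq:Neumann_deriv_conditions} at every vertex of $\Omega$. Part (i) is immediate: each edge of $\Omega$ is either an entire edge of $\Gamma$ or a subinterval of one, and $f$ already satisfies $f''=-k^{2}f$ on all of $\Gamma$, so both the regularity and the ODE descend trivially to the restriction.

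The heart of the proof is (ii)--(iii), which I would organise by partitioning the vertices of $\Omega$ into three types according to how they arose from the construction in Definition \ref{def:Neumann-point-Neumann-Domain}:
\begin{enumerate}
\item[(a)] vertices $v$ of $\Omega$ that are also vertices of $\Gamma$ and are \emph{not} Neumann points of $f$;
\item[(b)] degree-one vertices of $\Omega$ added at Neumann points lying in the interior of some edge of $\Gamma$;
\item[(c)] degree-one vertices of $\Omega$ coming from Neumann points that are vertices of $\Gamma$ (the non-generic case highlighted in the remark after Definition \ref{def:Neumann-point-Neumann-Domain}).
\end{enumerate}
For type (a), since $v$ is not a Neumann point, no cut of the graph occurs at $v$ and the whole star around $v$ lies in the same Neumann domain; hence the star of $v$ in $\Omega$ coincides with the star of $v$ in $\Gamma$, and both the continuity and the derivative sum conditions are inherited verbatim. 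For type (b), the remark after Definition \ref{def:Neumann-point-Neumann-Domain} gives $f'(\bs x)=0$ at the Neumann point, and at a degree-one vertex \eqref{eq:Neumann_deriv_conditions} reduces to exactly this single outgoing derivative vanishing, while continuity is vacuous. For type (c), the same remark tells us that at such a vertex \emph{all} outgoing derivatives of $f$ vanish; when the vertex splits into several degree-one endpoints distributed among the Neumann domains touching it, each endpoint inherits one of those already-zero outgoing derivatives, so \eqref{eq:Neumann_deriv_conditions} again holds and continuity is again vacuous.

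There is essentially no analytic obstacle here; the content is purely bookkeeping about which points of $\Gamma$ become which kind of vertex of $\Omega$. The only step that requires care is type (c), where one must confirm that the splitting of an interior vertex of $\Gamma$ into several degree-one endpoints of possibly different Neumann domains is consistent with the Neumann vertex conditions on each piece, and this is exactly what the characterisation ``all outgoing derivatives vanish'' delivers. Once these three cases are disposed of, the restriction $f|_{\Omega}$ satisfies every condition in the definition of an eigenfunction of the standard graph $\Omega$, with the same eigenvalue $k$, completing the proof.
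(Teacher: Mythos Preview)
Your proof is correct and follows essentially the same approach as the paper, which dispatches the lemma in a single sentence just before its statement: each vertex of $\Omega$ is either a vertex of $\Gamma$ (inheriting the Neumann conditions) or an interior edge point with $f'(\bs x)=0$. Your case analysis (a)--(c) is a more careful elaboration of the same idea, and in particular your explicit treatment of case (c)---a Neumann point occurring at an interior vertex of $\Gamma$---fills in a detail the paper leaves implicit.
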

\begin{rem*}
Furthermore, it can be proved that if $f$ is a generic eigenfunction
and $\Omega$ is a tree graph then $f|_{\Omega}$ is also generic
\cite{AloBan_Neumann}.
\end{rem*}

\subsection{Possible topologies for Neumann domains}

In this subsection we discuss which graphs may be obtained as a Neumann
domain. The next lemma shows that if we consider an eigenfunction,
$f$, whose eigenvalue is high enough, each of its Neumann domains
is either a path graph or a star graph. A star Neumann domain contains
an interior vertex of the graph, and a path Neumann domain is contained
in a single edge of the graph (see Figure \ref{fig:Neumann domain example-1}).
\begin{figure}[h]
\centering{}(i)$\quad$\includegraphics[width=0.3\textwidth]{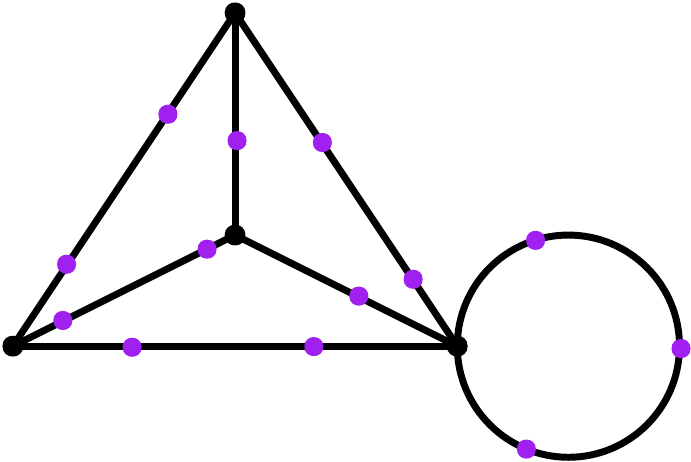}$\quad$(ii)$\quad$\includegraphics[width=0.3\textwidth]{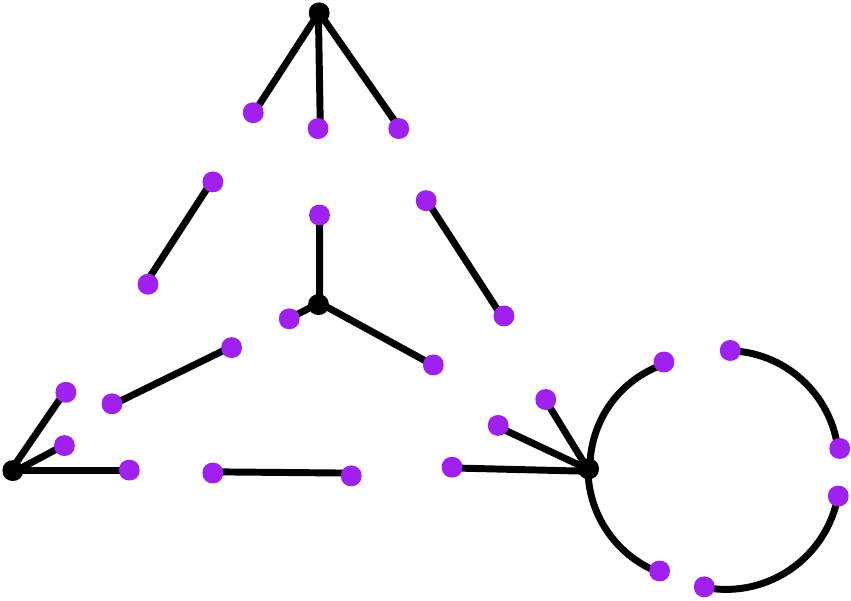}
\caption{(i) A graph $\Gamma$ with Neumann points (in purple) of a given eigenfunction
(ii) The decomposition of the graph to the corresponding Neumann domains.}
\label{fig:Neumann domain example-1}
\end{figure}

\begin{lem}
\label{lem:High-eigenvalue}Let $\Gamma$ be a nontrivial standard
graph. Let $f$ be an eigenfunction corresponding to an eigenvalue
$k>\frac{\pi}{L_{min}}$, where $L_{min}$ is the minimal edge length
of $\Gamma$. Let $\Omega$ be a Neumann domain of $f$.
\end{lem}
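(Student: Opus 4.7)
The plan is to control the structure of $\Omega$ by using the fact that consecutive extrema of $f$ along any single edge of $\Gamma$ are at distance $\pi/k < L_{min}$ apart, which is shorter than any edge of $\Gamma$. This forces every edge to be ``broken'' by at least one Neumann point in its interior, which severely restricts which subgraphs can appear as Neumann domains.

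First I would observe that on any edge $e$, the eigenfunction restricts to $\left.f\right|_{e}(x_{e})=A_{e}\cos(kx_{e}+\varphi_{e})$ for some constants $A_{e},\varphi_{e}$, and by Lemma \ref{lem:Morse-equiv-non-zero} the fact that $f$ is Morse gives $A_{e}\neq 0$. The critical points of $\left.f\right|_{e}$ are therefore equally spaced at distance $\pi/k$. Because $\pi/k<L_{min}\leq L_{e}$, the open interval $(0,L_{e})$ contains at least one critical point of $\left.f\right|_{e}$, which, lying in $\Gamma\setminus\partial\Gamma$, is a Neumann point of $f$ by Definition \ref{def:Neumann-point-Neumann-Domain}.

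The immediate consequence I would extract is that the interior of no edge of $\Gamma$ can be contained in $\Omega$: otherwise the interior Neumann point from the previous step would lie inside $\Omega$ rather than on its boundary, contradicting that $\Omega$ is a connected component of $\Gamma\setminus\Nd$. Using this, I would then show that $\Omega$ contains at most one interior vertex of $\Gamma$. Indeed, if $v_{1},v_{2}\in\V\setminus\partial\Gamma$ were both in $\Omega$, then connectedness of $\Omega$ would yield a path inside $\Omega$ from $v_{1}$ to $v_{2}$ along sub-arcs of edges of $\Gamma$. The first edge $e$ of this path would have to be traversed in full from one endpoint of $e$ to the other, because any exit through the interior of $e$ would have to cross a Neumann point, and such a point is a leaf of $\Omega$ (its two sides belong to different connected components of $\Gamma\setminus\Nd$). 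But then the entire interior of $e$ would lie in $\Omega$, contradicting the previous observation.

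The dichotomy now closes the argument. If $\Omega$ contains no interior vertex of $\Gamma$, then by connectedness it is contained in a single edge of $\Gamma$, so $\Omega$ is a path graph bounded by two Neumann points (or a Neumann point and a boundary vertex of $\Gamma$). If $\Omega$ contains a unique interior vertex $v$, then each of the $d_{v}$ arms of $\Omega$ leaving $v$ along an incident edge must terminate at the nearest Neumann point on that edge, which by the first step is reached after distance at most $\pi/k<L_{min}$, before any further vertex of $\Gamma$ can be encountered; hence $\Omega$ is a star graph centered at $v$ with $d_{v}$ arms. I expect the main obstacle to be the path-lifting argument in the second step, where one must be careful that the Neumann point at which a sub-arc terminates truly acts as a boundary leaf of $\Omega$ and cannot be re-entered from the other side; once this bookkeeping is done, the classification follows by inspection.
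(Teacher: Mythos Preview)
Your approach is essentially the paper's: write $\left.f\right|_e(x)=A_e\cos(kx+\varphi_e)$, use $k>\pi/L_{min}$ to force an interior Neumann point on every edge, conclude that a Neumann domain meets at most one vertex of $\Gamma$, and split into the star/path cases accordingly. The paper dispatches the ``at most one vertex'' step in a single sentence rather than via your path-lifting argument, so you are overestimating the difficulty there; once each edge has an interior Neumann point, a connected component of $\Gamma\setminus\Nd$ simply cannot reach a second vertex.

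Two small points. First, you invoke Lemma~\ref{lem:Morse-equiv-non-zero} to get $A_e\neq 0$; this is legitimate since Neumann domains are only defined for Morse eigenfunctions, but it is worth saying explicitly. Second, you omit the computation of the length of the path-graph Neumann domain: the lemma asserts it is exactly $\pi/k$, and the paper supplies this by noting that at either endpoint (whether a Neumann point or a boundary vertex with its Neumann condition) one has $f'=0$, so $\left.f\right|_\Omega(x)=\cos(kx)$ up to a constant and the next zero of the derivative occurs at $x=\pi/k$. You should add that line.
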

\begin{enumerate}
\item If $\Omega$ contains a vertex $v\in\V$ of degree $d_{v}>2$ then
$\Omega$ is a \emph{star graph}\textbf{ }with $deg\left(v\right)$
edges.
\item If $\Omega$ does not contain a vertex $v\in\V$ of degree $d_{v}>2$
then\textbf{ }$\Omega$ is a \emph{path graph, of length $\frac{\pi}{k}$.}
\end{enumerate}
\begin{proof}
For any edge $e\in\E$ we have that $f|_{e}\left(x\right)=B_{e}\cos\left(kx+\varphi_{e}\right)$,
where $B_{e},\varphi_{e}$ are some edge dependent real parameters.
This together with $k>\frac{\pi}{L_{min}}$ implies that the derivative
of $f$ vanishes at least once at the interior of each edge. Hence,
the set of Neumann points, $\Nd$ contains at least one point on each
edge. It follows that each Neumann domain contains at most one vertex
of $\Gamma$. Thus, there are two types of Neumann domains: if a Neumann
domain, $\Omega$, contains a vertex with $\deg v>2$ then $\Omega$
is a \emph{star graph}, whose number of edges is $d_{v}$; otherwise
$\Omega$ is a \emph{path graph}. A Neumann domain which is a path
graph can be parameterized as $\Omega=[0,l]$. Since $f'(0)=0$ we
get that $f|_{\Omega}\left(x\right)=\cos\left(kx\right)$ up to a
multiplicative constant . Using $f'(l)=0$ and that $f'$ does not
vanish in the interior of $\Omega$ we conclude $l=\frac{\pi}{k}$.
\end{proof}
\begin{rem*}
Only finitely many eigenvalues do not satisfy the condition $k>\frac{\pi}{L_{min}}$
in the lemma. The number of those eigenvalues is bounded by
\[
\left|\left\{ n\in\N\,:\,0\leq k_{n}\le\frac{\pi}{L_{min}}\right\} \right|~\le~2\frac{\left|\Gamma\right|}{L_{min}},
\]
where $\left|\Gamma\right|=\sum_{e\in\E}L_{e}$ is the total sum of
all edge lengths of $\Gamma$. This can be shown using
\begin{equation}
\forall n\in\N,\quad k_{n}\ge\frac{\pi}{2\left|\Gamma\right|}\left(n+1\right),\label{eq:Fried-bound}
\end{equation}
which is the statement of Theorem 1 in \cite{Fri_aif05}.
\end{rem*}
To complement the lemma above, we note that there are also Neumann
domains which are not simply connected. Indeed, consider the graph
$\Gamma$ depicted in Figure \ref{fig:Non-simply-connected-ND}(i).
It has an eigenfunction with no Neumann points, so that the eigenfunction
has a single Neumann domain which is the whole of $\Gamma$ and in
particular, it is not simply connected (Figure \ref{fig:Non-simply-connected-ND}(ii)).

\begin{figure}[h]
\centering{}(i) $\quad$\includegraphics[width=0.3\textwidth]{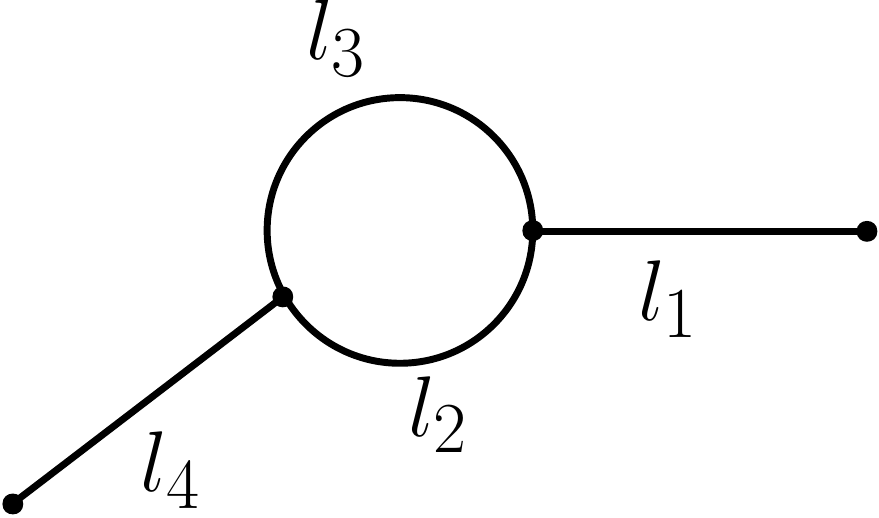}
\hfill{}(ii)$\quad$\includegraphics[width=0.4\textwidth]{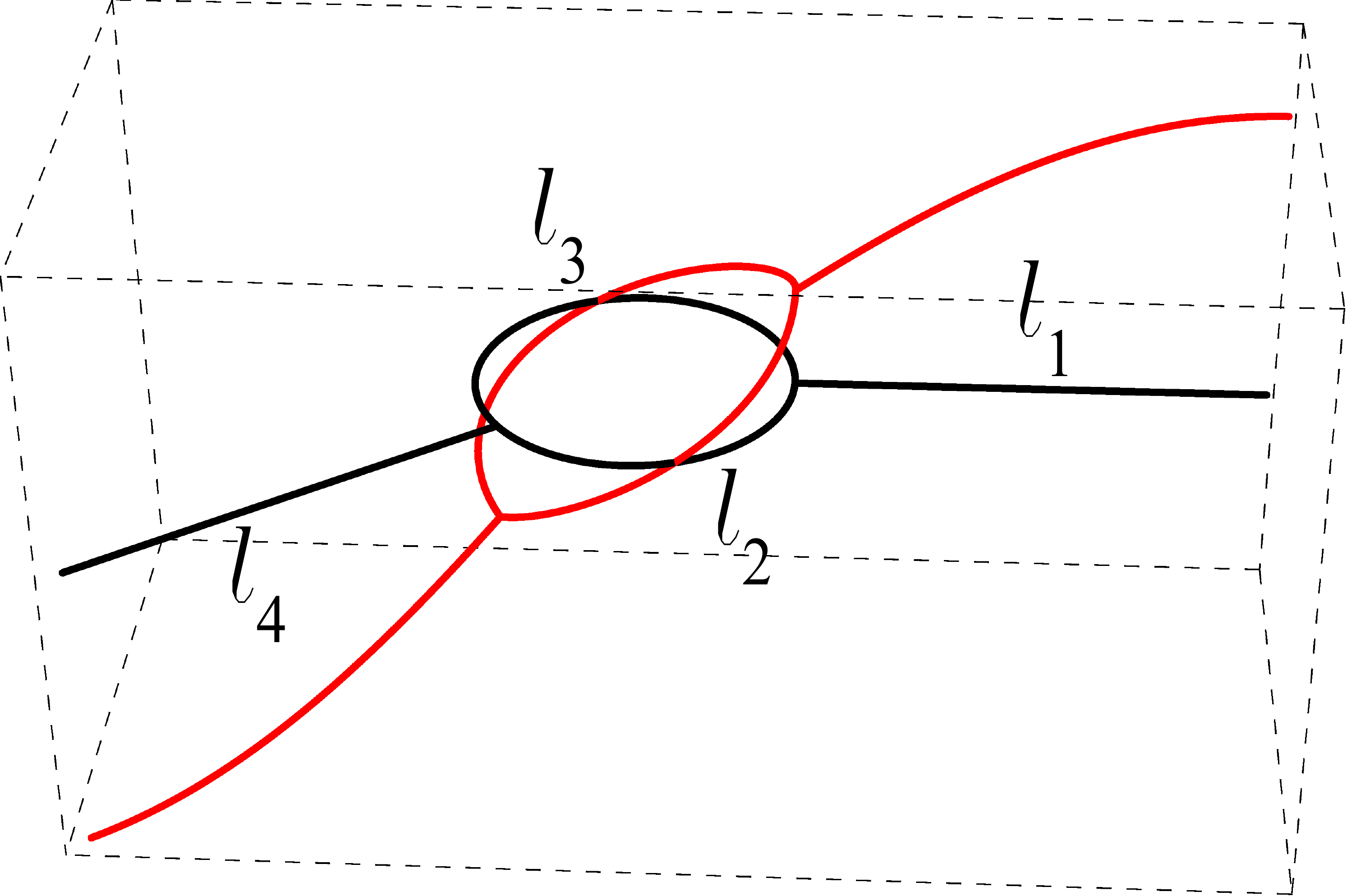}
\caption{(i) A graph $\Gamma$ with (ii) An eigenfunction whose single Neumann
domain is not simply connected.}
\label{fig:Non-simply-connected-ND}
\end{figure}

\subsection{Critical points and nodal points - number and position}

In the following we consider the critical points and nodal points
of $f|_{\Omega}$. Note that, by definition, a Morse function on a
one dimensional interval cannot have a saddle point. Hence, all critical
points of a Morse eigenfunction of a graph are extremal points. We
reuse the notations from the manifold part: $\Xt$ for extremal points
of $f$ and $\Max$ ($\Min$) for maxima (minima). Denote by $\phi(f|_{\Omega})$
the number of nodal points of $f|_{\Omega}$, by $E_{\Omega}$ the
number of edges of $\Omega$, by $V_{\Omega}$ the number of its vertices,
and by $\partial\Omega$ the vertices of $\Omega$ which are of degree
one.
\begin{prop}
\cite{AloBan_Neumann}\label{prop:Critical-and-nodal-points-graph}
Let $f$ be a generic eigenfunction and $\Omega$ a Neumann domain
of $f$. Then
\end{prop}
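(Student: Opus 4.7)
The plan is to combine Lemma~\ref{lem:Restriction-is-Neumann-eigenfunction-graph}, which tells us that $f|_\Omega$ is a Neumann eigenfunction of the standard graph $\Omega$ at the same eigenvalue $k$, with the explicit local form $f|_e(x)=A_e\cos(kx+\varphi_e)$ on each edge and the genericity hypotheses of Definition~\ref{def:Morse-and-Generic}. The first step is to pin down the extrema of $f|_\Omega$. Every $v\in\partial\Omega$ is, by construction (Definition~\ref{def:Neumann-point-Neumann-Domain}), a Neumann point of $f$, hence an extremum of $f$ and therefore of $f|_\Omega$. Conversely, no interior edge-point of $\Omega$ is a critical point of $f|_\Omega$: if it were, $f'$ would vanish there, contradicting the fact that Neumann points were removed when forming $\Omega$. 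No interior vertex of $\Omega$ can be an extremum either; such a vertex has degree $\geq 3$ in $\Gamma$, the edge-germs of $\Omega$ at it coincide with those of $\Gamma$, and part~(c) of genericity forbids extrema of $f$ at interior vertices of $\Gamma$. This would yield $\Xt(f|_\Omega)=\partial\Omega$, and in particular since $\Omega$ is compact and $f|_\Omega$ nonconstant (Lemma~\ref{lem:Morse-equiv-non-zero}) both $\Max\cap\partial\Omega$ and $\Min\cap\partial\Omega$ are nonempty.

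With the critical set determined, I would count nodal points edge by edge. Because all critical points of $f|_\Omega$ lie in $\partial\Omega$, the restriction of $f$ to any single edge of $\Omega$ is strictly monotone between its two endpoints, so it contributes exactly one nodal point when the values at its endpoints have opposite signs and none otherwise. For path-type Neumann domains (Lemma~\ref{lem:High-eigenvalue}(ii)) the two ends are Neumann points with extremal values $+A$ and $-A$, so the single edge carries exactly one nodal point. For star-type Neumann domains the central vertex $v$ has one common nonzero value $c$ (nonzero by part~(b) of genericity), while each leaf is an extremum of value $\pm A_e$; the number of nodal points then equals the number of leaves whose extremal value has sign opposite to $\sgn(c)$. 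Summing over edges and using the tree relation $V_\Omega=E_\Omega+1$ where applicable, one assembles a closed formula for $\phi(f|_\Omega)$ in terms of $E_\Omega$, $V_\Omega$ and $|\partial\Omega|$. The case in which $\Omega$ is not a tree (as in Figure~\ref{fig:Non-simply-connected-ND}) would be addressed separately if needed, via the same edgewise monotonicity argument.

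The main obstacle is the step ruling out interior vertices of $\Omega$ as extrema of $f|_\Omega$: being an extremum on $\Omega$ is formally a weaker condition than being an extremum on $\Gamma$ since fewer edges are visible. The resolution is that the Neumann points defining $\Omega$ always lie in the interiors of edges of $\Gamma$, so at any degree-${\geq}3$ vertex of $\Omega$ the local edge structure is identical to that of $\Gamma$; hence $v\in\Xt(f|_\Omega)$ if and only if $v\in\Xt(f)$, and genericity kills this possibility. Once this point is settled, the remainder reduces to elementary monotonicity of $\cos(kx+\varphi_e)$ together with the Neumann matching conditions at the central vertex.
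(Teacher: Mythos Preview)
The paper does not actually prove Proposition~\ref{prop:Critical-and-nodal-points-graph}; it is stated with a citation to \cite{AloBan_Neumann} and no argument appears in the text. So there is no in-paper proof to compare against, and I can only comment on the soundness of your outline.

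Your treatment of parts~(\ref{enu:Critical-and-nodal-points-graph-1}) and~(\ref{enu:Critical-and-nodal-points-graph-2}) is essentially correct, with one small imprecision: not every $v\in\partial\Omega$ is a Neumann point of $f$. A degree-one vertex of $\Gamma$ that lies in $\Omega$ also belongs to $\partial\Omega$, and such a vertex is by definition \emph{excluded} from $\Nd$ (Definition~\ref{def:Neumann-point-Neumann-Domain}). The conclusion survives, since the Neumann condition at a degree-one vertex forces the outgoing derivative to vanish and Morse-ness then makes it a strict extremum; but the sentence ``every $v\in\partial\Omega$ is, by construction, a Neumann point of $f$'' is false as written. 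The converse inclusion, and the deduction of part~(\ref{enu:Critical-and-nodal-points-graph-2}) from compactness plus part~(\ref{enu:Critical-and-nodal-points-graph-1}), are fine.

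Part~(\ref{enu:Critical-and-nodal-points-graph-3}) is where there is a genuine gap. The edgewise-monotonicity observation is correct and yields the crude bound $\phi(f|_\Omega)\le E_\Omega$, and your descriptions for path and star Neumann domains are accurate (for a star, combining your formula with part~(\ref{enu:Critical-and-nodal-points-graph-2}) does give $1\le\phi\le|\partial\Omega|-1$, though you do not say so explicitly). But ``one assembles a closed formula'' is not a proof of the stated inequality $\phi(f|_\Omega)\le E_\Omega-V_\Omega+|\partial\Omega|$, and the proposition is asserted for an \emph{arbitrary} Neumann domain of a generic eigenfunction, not only for stars and paths; Lemma~\ref{lem:High-eigenvalue} requires $k>\pi/L_{\min}$, which you are not entitled to assume here. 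What is missing is a mechanism that saves one nodal point per interior vertex of $\Omega$. One clean route: use part~(\ref{enu:Critical-and-nodal-points-graph-1}) to show that every nodal domain of $f|_\Omega$ must contain at least one vertex of $\partial\Omega$ (the extremum of $f$ on a nodal domain cannot sit at an interior vertex of $\Omega$, nor at a nodal point), so the number $\nu$ of nodal domains satisfies $\nu\le|\partial\Omega|$; an Euler-characteristic count relating $\phi$, $\nu$ and $\beta_\Omega=E_\Omega-V_\Omega+1$ then gives the bound. Your sketch does not supply this step, and deferring the non-tree case ``if needed'' leaves the proposition unproved in the generality claimed.
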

\begin{enumerate}
\item \label{enu:Critical-and-nodal-points-graph-1} The extremal points
of $f|_{\Omega}$ , which are located on $\Omega$ are exactly the
boundary of $\Omega$, i.e., $\Xt\cap\Omega=\partial\Omega$
\item \label{enu:Critical-and-nodal-points-graph-2}$1\leq\left|\Max\cap\partial\Omega\right|\le\left|\partial\Omega\right|-1$
(and the same bounds for $\left|\Min\cap\partial\Omega\right|$).
\item \label{enu:Critical-and-nodal-points-graph-3}$1\le\phi(f|_{\Omega})\le E_{\Omega}-V_{\Omega}+\left|\partial\Omega\right|$.
\end{enumerate}
\begin{rem*}
Note that when $\Omega$ is a path graph the proposition implies that
it has exactly one maximum, one minimum and one nodal point. Also,
when $\Omega$ is a tree graph, the last part of the proposition gives
$1\le\phi(f|_{\Omega})\le\left|\partial\Omega\right|-1$.
\end{rem*}

\section{Geometry of $\Omega$}

Similarly to the manifold case we use the normalized area to perimeter
ratio to quantify the geometry of a Neumann domain. The following
is to be compared with Definition \ref{def:Area-to-Perimeter-Manifolds}.
\begin{defn}
\label{def:Area-to-perimeter-graphs} Let $f$ be a Morse eigenfunction
corresponding to the eigenvalue $k$. Let $\Omega$ be a Neumann domain
of $f$, whose edge lengths are $\{l_{j}\}_{j=1}^{E_{\Omega}}$. We
define the normalized area to perimeter ratio of $\Omega$ to be
\[
\rho\left(\Omega\right):=\frac{\left|\Omega\right|}{\left|\d\Omega\right|}k,
\]
where $\left|\Omega\right|=\sum_{j=1}^{E_{\Omega}}l_{j}$ and $\left|\partial\Omega\right|$
is the number of boundary vertices of $\Omega$.
\end{defn}
For graphs we are able to obtain global bounds on $\rho(\Omega)$.
\begin{prop}
\cite{AloBan_Neumann}\label{prop:rho-Bounds-graph} Let $\Omega$
be a Neumann domain. We have
\begin{equation}
\frac{1}{\left|\partial\Omega\right|}\le\frac{\rho\left(\Omega\right)}{\pi}\le\frac{E_{\Omega}}{\left|\partial\Omega\right|}.\label{eq:rho-Bounds-graph}
\end{equation}
If $\Omega$ is a star graph then we have a better upper bound $\frac{\rho\left(\Omega\right)}{\pi}\le1-\frac{1}{\left|\partial\Omega\right|}$.\\
If $\Omega$ is a path graph then $\rho\left(\Omega\right)=\frac{\pi}{2}$.
\end{prop}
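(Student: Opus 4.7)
The plan is to prove each of the four inequalities separately. The two global bounds come from direct edge-form analysis combined with the spectral estimate (\ref{eq:Fried-bound}) applied to $\Omega$ itself; the path identity is already contained in the proof of Lemma \ref{lem:High-eigenvalue}; and the star-graph refinement reduces to an elementary but delicate inequality relating a sum of angles and a sum of their tangents.

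\emph{General bounds.} On each edge $e$ of $\Omega$ one has $f|_e(x) = B_e\cos(kx+\varphi_e)$ with $B_e \neq 0$ by Lemma \ref{lem:Morse-equiv-non-zero}. The absence of Neumann points in the interior of $e$ forbids $\sin(kx+\varphi_e)$ from vanishing on $(0,l_e)$, so $l_e \le \pi/k$. Summing over the $E_\Omega$ edges gives $|\Omega|\le E_\Omega\pi/k$, which is the upper bound $\rho(\Omega)/\pi \le E_\Omega/|\partial\Omega|$. For the lower bound, Lemma \ref{lem:Restriction-is-Neumann-eigenfunction-graph} says $f|_\Omega$ is a nonconstant eigenfunction of the standard graph $\Omega$ with eigenvalue $k$, so $k \ge k_1(\Omega)$; applying (\ref{eq:Fried-bound}) to $\Omega$ itself gives $k_1(\Omega)\ge \pi/|\Omega|$, hence $k|\Omega|\ge \pi$ and $\rho(\Omega)/\pi \ge 1/|\partial\Omega|$. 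The path case with $|\partial\Omega|=2$ follows by the argument in the proof of Lemma \ref{lem:High-eigenvalue}: a single cosine with Neumann conditions at both ends has length exactly $\pi/k$, so $\rho(\Omega)=\pi/2$.

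\emph{Star graph.} Label the $n := |\partial\Omega|$ leaves and parametrize each with $x=0$ at the center vertex $p$, so that $f|_j(x)=B_j\cos\bigl(k(x-l_j)\bigr)$ automatically satisfies Neumann at the leaf. The Neumann conditions at $p$ read $B_j\cos(kl_j)=c$ for a common $c$ together with $\sum_j B_j\sin(kl_j)=0$. With $\alpha_j := \pi-kl_j \in [0,\pi)$, the target bound is equivalent to $\sum_j \alpha_j \ge \pi$. If $c=0$ then $\cos(kl_j)=0$ for every $j$ (as $B_j\neq 0$), forcing $\alpha_j=\pi/2$ and giving $\sum\alpha_j = n\pi/2 \ge \pi$ whenever $n \ge 2$. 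If $c\neq 0$ then $B_j=c/\cos(kl_j)$ and the matching condition becomes $\sum_j \tan\alpha_j = 0$; we split by the number of indices with $\alpha_j>\pi/2$. If two or more exist, $\sum\alpha_j > \pi$ trivially. If exactly one, say $\alpha_1$, rewrite the constraint as $\sum_{j\ge 2}\tan\alpha_j = \tan(\pi-\alpha_1)$ and use the tangent superadditivity $\tan(x+y)\ge \tan x+\tan y$ on $[0,\pi/2)$ together with monotonicity of $\tan$ to deduce $\sum_{j\ge 2}\alpha_j \ge \pi-\alpha_1$. If none exists, all $\alpha_j$ lie in $[0,\pi/2)$ with $\sum\tan\alpha_j=0$, forcing $\alpha_j=0$ for every $j$; but then $kl_j=\pi$ for every $j$ and all outgoing derivatives at $p$ vanish, making $p$ an extremum of $f$ and hence a Neumann point, contradicting that $p$ lies in the interior of the Neumann domain $\Omega$.

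The main obstacle is the star-graph step, specifically the inequality $\sum\alpha_j\ge \pi$ subject to $\sum\tan\alpha_j=0$. The case-by-case treatment hinges on the convexity of $\tan$ on $[0,\pi/2)$ (yielding superadditivity) and on the Neumann-domain structure to rule out the fully degenerate configuration in which every leaf has length $\pi/k$.
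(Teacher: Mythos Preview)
The paper does not include a proof of this proposition in the text, deferring instead to \cite{AloBan_Neumann}, so there is no in-text argument to compare against. Your argument is correct. The general upper bound follows exactly as you say: no extremum of $f$ can lie in the open interior of an edge of $\Omega$, so each edge length is at most $\pi/k$. The lower bound via Friedlander's inequality (\ref{eq:Fried-bound}) applied to $\Omega$ itself is legitimate, since $\Omega$ is a connected standard graph and $f|_\Omega$ is a non-constant eigenfunction, whence $k\ge k_1(\Omega)\ge\pi/|\Omega|$. The path-graph identity is immediate.

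Your star-graph reduction to the inequality $\sum_j\alpha_j\ge\pi$ subject to $\sum_j\tan\alpha_j=0$ (with $\alpha_j=\pi-kl_j$) is a clean formulation, and the case split on the number of obtuse $\alpha_j$ works. Two small points worth making explicit. First, in the ``exactly one obtuse'' case you implicitly use that the contrary assumption $\sum_{j\ge2}\alpha_j<\pi-\alpha_1$ forces $\sum_{j\ge2}\alpha_j<\pi/2$ (since $\alpha_1>\pi/2$), so that the superadditivity of $\tan$ on $[0,\pi/2)$ is indeed available for the whole sum. Second, in the degenerate ``none'' case you correctly argue that the centre $p$ becomes an extremum of $f$; it is worth noting that $p$ is necessarily an interior vertex of $\Gamma$ (degree $\ge3$, as $\Gamma$ has no degree-two vertices), so $p\notin\partial\Gamma$ and the definition (\ref{eq:Neumann-point-set}) does make $p$ a Neumann point, yielding the contradiction.
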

Next, we study the probability distribution of $\rho$. We find that
for this purpose, it is useful to consider separately only the Neumann
domains containing a particular vertex. Let $\Gamma$ be a nontrivial
standard graph and let $f_{n}$ be its $n^{th}$ eigenfunction. Assume
that $f_{n}$ is generic. Then, for any vertex $v\in\V$ there is
a unique Neumann domain of $f_{n}$ which contains $v$ and we denote
it by $\Omega_{n}^{\left(v\right)}$.
\begin{prop}
\cite{AloBan_Neumann}\label{prop:rho-distribution} Let $v\in\V$
of degree $d_{v}>2$. The value of $\frac{1}{\pi}\rho$ on $\{\Omega_{n}^{\left(v\right)}\}_{n=1}^{\infty}$
is distributed according to
\begin{equation}
\lim_{N\rightarrow\infty}\frac{\left|\left\{ n\le N\,:\,f_{n}\,\textrm{is generic and }\frac{1}{\pi}\rho\left(\Omega_{n}^{\left(v\right)}\right)\in\left(a,b\right)\right\} \right|}{\left|\left\{ n\le N\,:\,f_{n}\,\textrm{is generic}\right\} \right|}=\int_{a}^{b}\zeta^{(v)}(x)\thinspace\mathrm{d}x,\label{eq:rho-distribution}
\end{equation}
where $\zeta^{(v)}$ is a probability distribution supported on $[\frac{1}{d_{v}},1-\frac{1}{d_{v}}]$.

Furthermore, it is symmetric around $\frac{1}{2}$, i.e. $\zeta^{(v)}(x)=\zeta^{(v)}(1-x)$.
\end{prop}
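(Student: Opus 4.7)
The plan is to express $\rho(\Omega_n^{(v)})/\pi$ as a trigonometric function of the quantum-graph phases $k_n L_e \bmod 2\pi$ and then apply a Barra--Gaspard-type equidistribution theorem to obtain $\zeta^{(v)}$ as a pushforward measure. By Lemma \ref{lem:High-eigenvalue}, for all but finitely many $n$, $\Omega_n^{(v)}$ is a star graph with exactly $d_v$ edges; writing their lengths as $\ell_1^{(n)},\ldots,\ell_{d_v}^{(n)}$ and setting $\xi_j^{(n)}:=k_n\ell_j^{(n)}/\pi \in (0,1)$, Definition \ref{def:Area-to-perimeter-graphs} gives
\[
\frac{\rho(\Omega_n^{(v)})}{\pi} \;=\; \frac{1}{d_v}\sum_{j=1}^{d_v}\xi_j^{(n)}.
\]
On each edge $e_j$ incident to $v$ (parametrised from $v$), continuity of $f_n$ and the Neumann condition at $v$ force $f_n|_{e_j}(x)=f_n(v)\cos(k_n x)+B_j^{(n)}\sin(k_n x)$ with $\sum_j B_j^{(n)}=0$. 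The first zero of $f_n'|_{e_j}$ yields $\tan(\pi\xi_j^{(n)})=B_j^{(n)}/f_n(v)$ (here $f_n(v)\neq 0$ by genericity), hence the key constraint
\[
\sum_{j=1}^{d_v}\tan(\pi\xi_j^{(n)})=0.
\]

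I would then invoke the equidistribution theorem for quantum graphs with rationally independent lengths (the same engine underlying Proposition \ref{prop:Proportion-Morse-generic}): the phase vectors $\vec\kappa^{(n)}:=(k_n L_e\bmod 2\pi)_{e\in\E}$ equidistribute on the secular manifold $\Sigma\subset\mathbb{T}^{|\E|}$ with respect to an explicit probability measure $\meas$. Since the coefficients $B_j^{(n)}/f_n(v)$ of the local expansion of $f_n$ at $v$ can be read off from the null vector of the graph's vertex-scattering matrix at $\vec\kappa^{(n)}$, each $\xi_j^{(n)}$ is a measurable function of $\vec\kappa^{(n)}$. The distribution $\zeta^{(v)}$ is then the pushforward of $\meas$ (restricted to the open stratum where $f_n(v)\neq 0$) under $\vec\kappa\mapsto \frac{1}{d_v}\sum_j\xi_j(\vec\kappa)$, which delivers the convergence in (\ref{eq:rho-distribution}).

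For the support, I would analyse the constraint set $C=\{\xi\in(0,1)^{d_v}:\sum_j\tan(\pi\xi_j)=0\}$ and show $\sum_j\xi_j\in(1,d_v-1)$ on $C$. If at least two $\xi_j$ exceed $1/2$ then $\sum\xi_j>1$ trivially; if only one, say $\xi_1$, exceeds $1/2$, then rewrite the constraint as $\tan(\pi(1-\xi_1))=\sum_{j\ge 2}\tan(\pi\xi_j)$ and invoke the strict super-additivity $\tan(a_1+\cdots+a_m)>\sum_i\tan(a_i)$ for positive $a_i$ with $\sum a_i<\pi/2$ to conclude $1-\xi_1<\sum_{j\ge 2}\xi_j$, i.e.\ $\sum_j\xi_j>1$. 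The opposite bound $\sum_j\xi_j<d_v-1$ follows by applying the same reasoning to $\eta_j:=1-\xi_j$. Thus $\bar\xi\in(1/d_v,1-1/d_v)$.

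Finally, the symmetry $\zeta^{(v)}(x)=\zeta^{(v)}(1-x)$ follows from the involution $\xi_j\mapsto 1-\xi_j$, which preserves both $C$ (since $\tan$ is $\pi$-periodic and odd) and sends $\bar\xi\mapsto 1-\bar\xi$. On the secular-manifold side this is implemented by the inversion $\vec\kappa\mapsto -\vec\kappa$ (which preserves $\Sigma$ by the reality of the Laplacian): on each edge, $\cos(kx)$ is fixed while $\sin(kx)$ flips sign, so $B_j\mapsto -B_j$ and $f(v)$ is unchanged, giving $\tan(\pi\xi_j)\mapsto -\tan(\pi\xi_j)$ and hence $\xi_j\mapsto 1-\xi_j$. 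The invariance of $\meas$ under this inversion transfers to symmetry of the pushforward $\zeta^{(v)}$ about $1/2$. The main obstacle is the equidistribution step: expressing $(\xi_1,\ldots,\xi_{d_v})$ as a well-defined measurable function of $\vec\kappa$ $\meas$-almost everywhere on $\Sigma$ and verifying the invariance of $\meas$ under $\vec\kappa\mapsto-\vec\kappa$ -- both are standard but technically nontrivial parts of the framework of \cite{AloBan_Neumann,AloBanBer_cmp18}.
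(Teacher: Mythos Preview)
The paper does not contain a proof of this proposition; it is stated with a citation to \cite{AloBan_Neumann}. So a line-by-line comparison is not possible here, but your outline is exactly the mechanism that the surrounding graph results in the paper (Propositions \ref{prop:Proportion-Morse-generic}, \ref{prop:Prob-Dist-of-Spectral-Position}, \ref{prop:Nodal-Neumann-Distribution}) are built on and that one expects in \cite{AloBan_Neumann}: write $\rho(\Omega_n^{(v)})/\pi=\frac{1}{d_v}\sum_j\xi_j^{(n)}$ in terms of the star edge lengths, encode the $\xi_j$ as measurable functions of the phase vector on the secular manifold via the Neumann constraint $\sum_j\tan(\pi\xi_j)=0$, push forward the Barra--Gaspard measure to get $\zeta^{(v)}$, and read off the symmetry from the torus inversion $\vec\kappa\mapsto-\vec\kappa$. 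Your tangent super-additivity argument for the support bound $\sum_j\xi_j\in(1,d_v-1)$ is clean and correct.

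Two minor points worth tightening. First, your case split for the lower bound should explicitly record that ``all $\xi_j<1/2$'' is impossible on $C$ (all tangents would be strictly positive and could not sum to zero); you use this implicitly when you only treat ``at least two $\xi_j>1/2$'' and ``exactly one $\xi_j>1/2$''. Second, the claim that $\vec\kappa\mapsto-\vec\kappa$ sends $B_j\mapsto-B_j$ while fixing $f(v)$ is really a statement about how the canonical eigenfunction attached to a point of $\Sigma$ transforms under complex conjugation in the bond-scattering formalism, together with the invariance of the Barra--Gaspard measure under that conjugation; this is precisely what is established in \cite{AloBanBer_cmp18} for the nodal-surplus symmetry, and you are right to flag it as the technical core rather than try to improvise it here.
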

\begin{rem*}
If $d_{v}=1$ then $\Omega_{n}^{\left(v\right)}$ is a path graph
for all $n$, so that by Proposition \ref{prop:rho-Bounds-graph}
we get that $\zeta^{(v)}$ is a Dirac measure $\zeta^{(v)}\left(x\right)=\delta\left(x-\frac{1}{2}\right)$.
\end{rem*}
~\\
As is implied by choice of notation, the distribution $\zeta^{(v)}$
indeed depends on the particular vertex $v\in\V$. We demonstrate
this in Figure \ref{fig: graph rho example},(iii) where we compare
between the probability distributions of two vertices of different
degrees from the same graph. In addition, Figure \ref{fig: graph rho example},(vi)
shows a comparison between the probability distributions of two vertices
of the same degree from different graphs. The numerics suggest that
the distributions are different, which implies that $\zeta^{(v)}$
may depend on the graph connectivity and not only on the degree of
the vertex. It is of interest to further investigate this distribution,
$\zeta^{(v)}$, and in particular its dependence on the graph's properties.

\begin{figure}
\begin{centering}
(i) $\quad$$\Gamma_{1}$\includegraphics[width=0.3\textwidth]{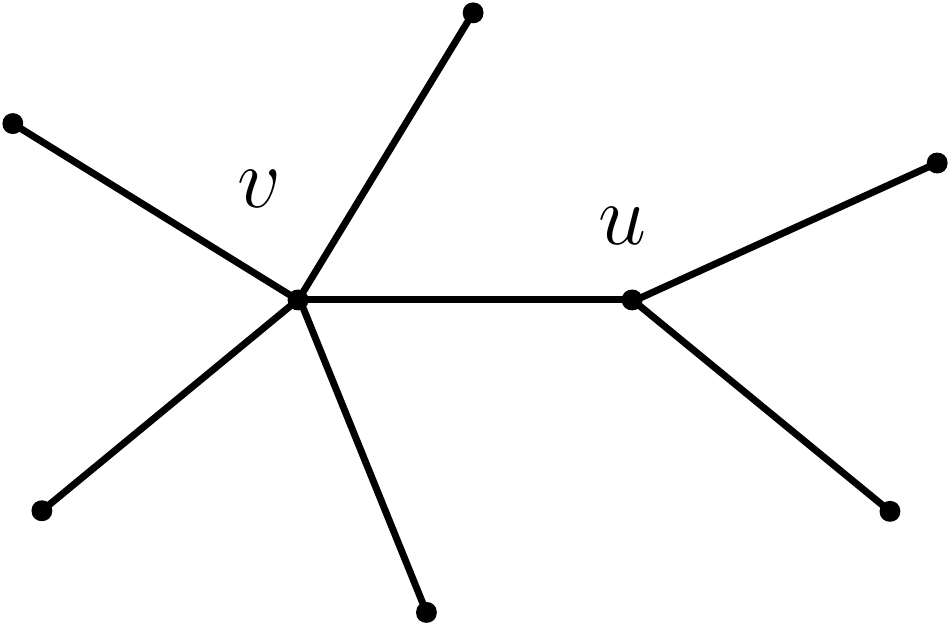}
\hfill{}(ii) $\quad$$\Gamma_{2}$\includegraphics[width=0.25\textwidth]{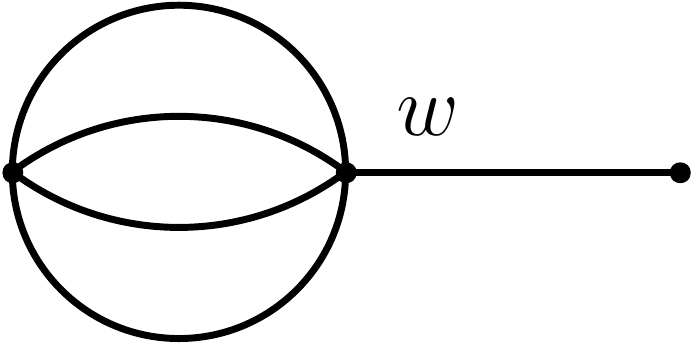}
\par\end{centering}
\begin{centering}
(iii)\includegraphics[width=0.7\textwidth]{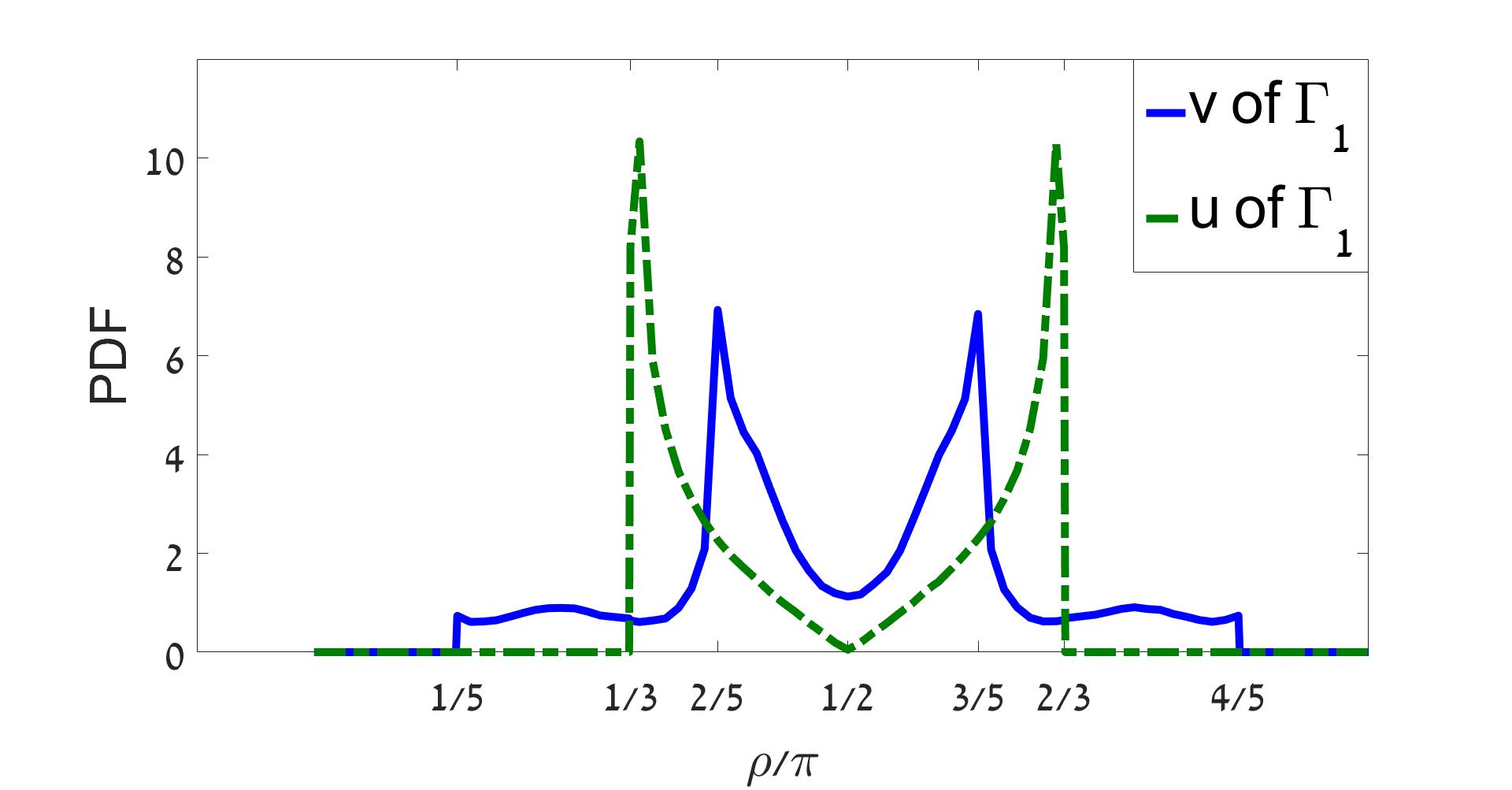}
\par\end{centering}
\begin{centering}
(iv)\includegraphics[width=0.7\textwidth]{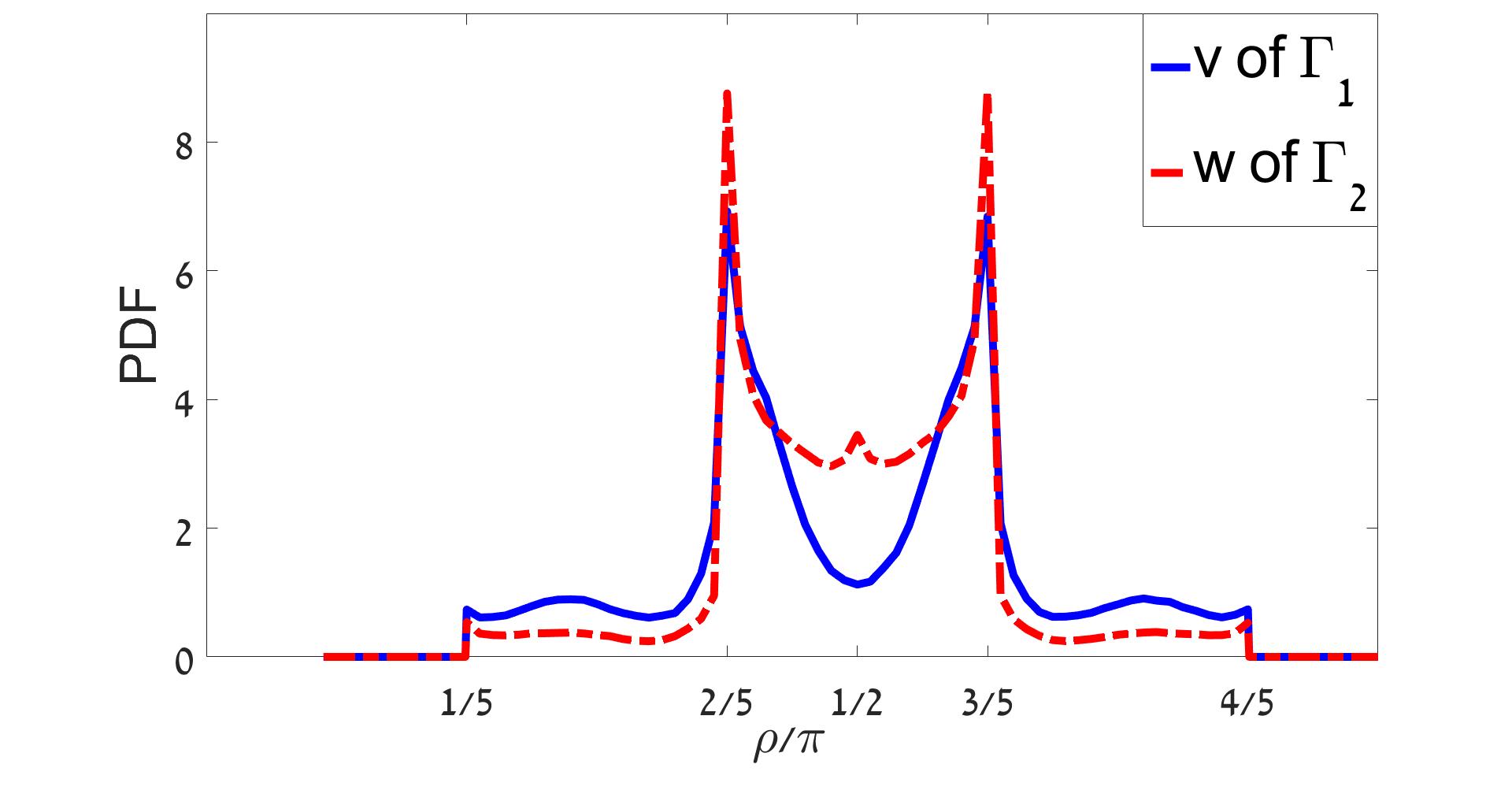}
\par\end{centering}
\centering{}\caption{(i) $\Gamma_{1}$, with vertices $v,~u$ of degrees $5,~3$, correspondingly.
(ii) $\Gamma_{2}$, with vertex $w$ of degree $5$. (iii) A probability
distribution function of $\frac{\rho}{\pi}$-values for the $\Gamma_{1}$
Neumann domains which contain $v$ (i.e., $\zeta^{(v)}$ in (\ref{eq:rho-distribution}))
compared with $\zeta^{(u)}$. (iv) Similarly, $\zeta^{(v)}$ compared
with $\zeta^{(w)}$.\protect \\
All the numerical data was calculated for the first $10^{6}$ eigenfunctions
and for a choice of rationally independent lengths.}
\label{fig: graph rho example}
\end{figure}

\section{Spectral position of $\Omega$}

By Lemma \ref{lem:Restriction-is-Neumann-eigenfunction-graph}, a
graph eigenvalue $k$ appears in the spectrum of each of its Neumann
domains. Exactly as in Definition \ref{def:Spectral-Position} for
manifolds, we define the spectral position of a Neumann domain $\Omega$,
as the position of $k$ in the spectrum of $\Omega$ and denote it
by $N_{\Omega}(k)$. Also, as in the manifold case, we have that $N_{\Omega}(k)\geq1$
for graphs and for exactly the same reason (see discussion after Definition
\ref{def:Spectral-Position}).

A useful tool in estimating the spectral position is the following
lemma, connecting the spectral position of $\Omega$ to the nodal
count of $\left.f\right|_{\Omega}$.
\begin{lem}
\cite{AloBan_Neumann}\label{lem:Spectral-pos-equals-nodal-count}
Let $\Gamma$ be a nontrivial standard graph, $f$ be a generic eigenfunction
of $\Gamma$ corresponding to an eigenvalue $k$ and let $\Omega$
be a Neumann domain of $f$, which is a tree graph. Then
\begin{enumerate}
\item \label{enu:lem-Spectral-pos-equals-nodal-count-1}$N_{\Omega}(k)=\phi(f|_{\Omega})$.
\item \label{enu:lem-Spectral-pos-equals-nodal-count-2}$N_{\Omega}(k)\leq\left|\d\Omega\right|-1$.
\\
In particular if $\Omega$ is a path graph then $N_{\Omega}(k)=1$.
\end{enumerate}
\end{lem}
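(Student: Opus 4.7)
The plan is to prove part (1) via the classical nodal count theorem for trees with standard (Neumann) vertex conditions, and then derive part (2) as a short corollary using Proposition \ref{prop:Critical-and-nodal-points-graph},(\ref{enu:Critical-and-nodal-points-graph-3}) together with the tree identity $E_\Omega - V_\Omega = -1$.

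To prove part (1), I would first invoke Lemma \ref{lem:Restriction-is-Neumann-eigenfunction-graph} to view $f|_\Omega$ as an eigenfunction of the standard graph $\Omega$ with eigenvalue $k$. The key input is the remark following Lemma \ref{lem:Restriction-is-Neumann-eigenfunction-graph}, which asserts that when $f$ is generic and $\Omega$ is a tree, $f|_\Omega$ is a generic eigenfunction of $\Omega$; in particular, $k$ is a simple eigenvalue of $\Omega$, so the spectral position $N_\Omega(k)$ is precisely the unique index $n$ such that $f|_\Omega$ is the $n$-th eigenfunction (zero-indexed from the constant ground state). I would then apply the nodal count theorem for tree graphs under standard vertex conditions, which states that for a generic eigenfunction at position $n$, the nodal point count equals $n$. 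This is the quantum graph analogue of Sturm's oscillation theorem for trees and gives $\phi(f|_\Omega) = n = N_\Omega(k)$, proving (\ref{enu:lem-Spectral-pos-equals-nodal-count-1}).

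For part (2), I would combine (\ref{enu:lem-Spectral-pos-equals-nodal-count-1}) with Proposition \ref{prop:Critical-and-nodal-points-graph},(\ref{enu:Critical-and-nodal-points-graph-3}), which says $\phi(f|_\Omega) \leq E_\Omega - V_\Omega + |\partial\Omega|$. Since $\Omega$ is a tree, its first Betti number vanishes, hence $E_\Omega - V_\Omega = -1$ by \eqref{eq:Betti_number}, and so $\phi(f|_\Omega) \leq |\partial\Omega| - 1$. Substituting into part (1) gives $N_\Omega(k) \leq |\partial\Omega| - 1$. For the path graph case, $|\partial\Omega| = 2$ forces $N_\Omega(k) \leq 1$, and the third remark after Definition \ref{def:Spectral-Position} (valid for graphs as well, as noted at the beginning of the section) gives $N_\Omega(k) \geq 1$, yielding equality.

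The main obstacle is the nodal count theorem for trees under Neumann vertex conditions invoked in part (1). This is a well-established fact in the quantum graph literature (see, e.g., the discussion of nodal surplus vanishing for trees in \cite{BanBerRazSmi_cmp12}); for the present paper's purposes I would simply cite \cite{AloBan_Neumann}, where the statement is proven in the setting of generic eigenfunctions on tree quantum graphs. The only subtlety is verifying that the particular notion of genericity used in Definition \ref{def:Morse-and-Generic} is sufficient for the tree nodal count theorem to apply to $f|_\Omega$, which is precisely the content of the aforementioned remark after Lemma \ref{lem:Restriction-is-Neumann-eigenfunction-graph}.
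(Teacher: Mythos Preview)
Your proposal is correct and follows essentially the same route as the paper: invoke the remark after Lemma \ref{lem:Restriction-is-Neumann-eigenfunction-graph} to ensure $f|_{\Omega}$ is generic, apply the tree nodal count theorem for part (\ref{enu:lem-Spectral-pos-equals-nodal-count-1}), and combine with Proposition \ref{prop:Critical-and-nodal-points-graph},(\ref{enu:Critical-and-nodal-points-graph-3}) for part (\ref{enu:lem-Spectral-pos-equals-nodal-count-2}). The only difference is bibliographic: the paper cites \cite{Berkolaiko07,Poketal_mat96,Schapotschnikow06} as the original sources for the tree nodal count theorem rather than \cite{AloBan_Neumann} or \cite{BanBerRazSmi_cmp12}.
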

The statement in (\ref{enu:lem-Spectral-pos-equals-nodal-count-1})
was proven in \cite{Berkolaiko07,Poketal_mat96,Schapotschnikow06}
under the assumption that $f|_{\Omega}$ is generic. This is indeed
the case since $f$ itself is generic and $\Omega$ is a tree graph
(see remark after Lemma \ref{lem:Restriction-is-Neumann-eigenfunction-graph}).
The statement in (\ref{enu:lem-Spectral-pos-equals-nodal-count-2})
follows as a combination of (\ref{enu:lem-Spectral-pos-equals-nodal-count-1})
with Proposition \ref{prop:Critical-and-nodal-points-graph},(\ref{enu:Critical-and-nodal-points-graph-3}).

We further remark on the applicability of the lemma above; it applies
for almost all Neumann domains. Indeed, for any given graph, all Neumann
domains except finitely many are star graphs or path graphs (by Lemma
\ref{lem:High-eigenvalue}), and those are particular cases of tree
graphs.\\

Next, we show that the value of the spectral position implies bounds
on the value of $\rho$, just as we had for manifolds (Proposition
\ref{prop:Rho-upper-bound-manifold}). For manifolds we got upper
bounds on $\rho$, whereas for graphs we get bounds from both sides.
\begin{prop}
\cite{AloBan_Neumann}\label{prop:Rho-bounds-spectral-position} Let
$\Gamma$ be a nontrivial standard graph, $f$ be an eigenfunction
of $\Gamma$ corresponding to an eigenvalue $k$ and let $\Omega$
be a Neumann domain of $f$. Then
\begin{equation}
\frac{\rho\left(\Omega\right)}{\pi}\geq\frac{1}{\left|\d\Omega\right|}\left(\frac{N_{\Omega}(k)+1}{2}\right).\label{eq:Rho-lower-bound}
\end{equation}
If $\Omega$ is a star graph then we further have the upper bound
\begin{equation}
\frac{\rho(\Omega)}{\pi}\leq\frac{1}{2}+\frac{1}{\left|\d\Omega\right|}\left(\frac{N_{\Omega}(k)-1}{2}\right).\label{eq:Rho-upper-bound}
\end{equation}
\end{prop}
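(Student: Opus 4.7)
The lower and upper bounds rest on completely different ingredients, so I treat them separately.

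\emph{Lower bound.} I would derive it in one line from the Friedlander estimate (\ref{eq:Fried-bound}) applied to $\Omega$ itself, viewed as a standard graph. By Lemma~\ref{lem:Restriction-is-Neumann-eigenfunction-graph}, the restriction $f|_{\Omega}$ is an eigenfunction of $\Omega$ of eigenvalue $k$, so $k$ sits at position $N_{\Omega}(k)$ of the spectrum of $\Omega$. Substituting $n = N_{\Omega}(k) \geq 1$ in (\ref{eq:Fried-bound}) gives $k \geq \pi(N_{\Omega}(k)+1)/(2|\Omega|)$, and dividing by $\pi|\partial\Omega|$ yields exactly (\ref{eq:Rho-lower-bound}). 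This argument does not use that $\Omega$ is a star; it is valid for every Neumann domain.

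\emph{Upper bound -- set-up.} Let $\Omega$ be a star with central vertex of degree $d = |\partial\Omega|$ and leaves of lengths $l_1, \ldots, l_d$. Since each boundary vertex of $\Omega$ is an extremum and $f$ is generic, on the $i$-th edge parametrized from its boundary vertex one has $f|_{\Omega}(x) = A_i \cos(kx)$ with $t_i := kl_i \in (0,\pi)\setminus\{\pi/2\}$ (using Lemma~\ref{lem:High-eigenvalue} and that $f$ does not vanish at the central vertex). Continuity and the Kirchhoff condition at the centre yield the secular equation $\sum_i \tan(t_i) = 0$. Edge $i$ carries a zero of $f|_{\Omega}$ precisely when $t_i > \pi/2$, so by Lemma~\ref{lem:Spectral-pos-equals-nodal-count} the set $P := \{i : t_i > \pi/2\}$ satisfies $|P| = N_{\Omega}(k)$. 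Writing $s_i := t_i - \pi/2 \in (0,\pi/2)$ for $i \in P$ and $r_j := \pi/2 - t_j \in (0,\pi/2)$ for $j \in Q := \{1,\ldots,d\}\setminus P$, the secular equation becomes $\sum_P \cot(s_i) = \sum_Q \cot(r_j) =: S$, and a direct rearrangement turns (\ref{eq:Rho-upper-bound}) into the scalar inequality
\[
\sum_{i \in P} s_i - \sum_{j \in Q} r_j \ \leq\ \frac{\pi(|P|-1)}{2}.
\]

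\emph{Upper bound -- analytic core.} Setting $c_i := \cot(s_i)$, $d_j := \cot(r_j)$ (so $c_i, d_j > 0$ and $\sum c_i = \sum d_j = S$) and using $s = \pi/2 - \arctan(\cot s)$ together with the analogous identity for $r$, the displayed inequality is equivalent to $\sum_Q \arctan(d_j) - \sum_P \arctan(c_i) \leq \pi(|Q|-1)/2$. Concavity of $\arctan$ on $(0,\infty)$ gives the Jensen bound $\sum_Q \arctan(d_j) \leq |Q|\arctan(S/|Q|)$, and the subadditivity $\arctan(x+y) \leq \arctan(x) + \arctan(y)$ (a one-line consequence of $\arctan(0)=0$ and concavity, extended by induction) gives $\sum_P \arctan(c_i) \geq \arctan(S)$. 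It therefore suffices to show $g(m) := m\arctan(S/m) - \arctan(S) - (m-1)\pi/2 \leq 0$ for every $m \geq 1$, which is immediate because $g(1) = 0$ and $g'(m) = \arctan(S/m) - (S/m)/(1 + (S/m)^2) - \pi/2 < 0$ on $(0,\infty)$.

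\emph{Main obstacle.} The naive edge-wise bound $\sum t_i < \pi d/2 + \pi|P|/2$, using only $t_i \in (0,\pi)\setminus\{\pi/2\}$, is short of (\ref{eq:Rho-upper-bound}) by exactly $\pi/(2d)$, so the secular equation must enter somewhere, and the delicate part is the $\arctan$--inequality step above. I also considered Dirichlet-cutting at the zeros of $f|_{\Omega}$, which splits $\Omega$ into $|P|$ Neumann--Dirichlet leaves of length $\pi/(2k)$ plus a mixed-boundary central sub-star with first eigenvalue $k$; a doubling-plus-Friedlander argument applied to the central piece reproduces the lower bound but does not give a matching upper bound, which is why the analytic route appears essential.
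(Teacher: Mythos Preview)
The paper does not prove this proposition here; it is quoted from \cite{AloBan_Neumann}. So there is no in-paper argument to compare against, and I evaluate your proposal on its own.

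\textbf{Lower bound.} Your one-line derivation from Friedlander's inequality (\ref{eq:Fried-bound}) is correct and is exactly the kind of argument one expects: $k=k_{N_{\Omega}(k)}$ in the Neumann spectrum of $\Omega$, so $k\geq \tfrac{\pi}{2|\Omega|}(N_{\Omega}(k)+1)$, and dividing by $\pi|\partial\Omega|$ gives (\ref{eq:Rho-lower-bound}). No issues here.

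\textbf{Upper bound.} Your analytic argument is sound, and the chain of reductions (secular equation $\Rightarrow$ the $\arctan$ inequality $\Rightarrow$ Jensen plus subadditivity $\Rightarrow$ monotonicity of $g(m)$) all check out. Two small points are worth flagging. First, you silently assume $f$ is \emph{generic}: you need $f(v)\neq 0$ at the central vertex to get $t_i\neq\pi/2$, and you invoke Lemma~\ref{lem:Spectral-pos-equals-nodal-count} to identify $|P|=N_{\Omega}(k)$, which also requires genericity. The proposition as stated does not include this hypothesis. The gap is easy to close: if $f(v)=0$ then $t_i=\pi/2$ for every $i$, so $\rho(\Omega)/\pi=1/2$ and (\ref{eq:Rho-upper-bound}) holds trivially since $N_{\Omega}(k)\geq 1$; the other non-generic possibilities (extremum at $v$, or $f$ vanishing on an edge) are already excluded by $\Omega$ being a star Neumann domain of a Morse eigenfunction. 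Second, your claim $t_i\in(0,\pi)$ is not quite right: absence of extrema in the \emph{interior} of the $i$-th edge gives only $t_i\leq\pi$, and $t_i=\pi$ is not ruled out (it corresponds to the outgoing derivative along that one edge vanishing at $v$, which does not make $v$ an extremum of $f$). This does not damage the proof, since $t_i=\pi$ gives $c_i=\cot(s_i)=0$ and the subadditivity step $\sum_P\arctan(c_i)\geq\arctan(S)$ is unaffected by zero summands. The reference to Lemma~\ref{lem:High-eigenvalue} is superfluous; the star hypothesis is assumed directly.
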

\begin{rem*}
Note that if $N_{\Omega}(\lambda)>1$ then the bound in (\ref{eq:Rho-lower-bound})
improves the lower bound given in Proposition \ref{prop:rho-Bounds-graph}.
Similarly, if $\Omega$ is a star graph and $N_{\Omega}(\lambda)<\left|\d\Omega\right|-1$,
then the bound (\ref{eq:Rho-upper-bound}) improves the upper bound
given in Proposition \ref{prop:rho-Bounds-graph} for star graphs.
\end{rem*}
~

Next, we show that the spectral position has a well-defined probability
distribution. As in the previous section (Proposition \ref{eq:rho-distribution}),
we find that this distribution is best described when one focuses
on Neumann domains containing a particular graph vertex.
\begin{prop}
\cite{AloBan_Neumann}\label{prop:Prob-Dist-of-Spectral-Position}
Let $v\in\V$ of degree $d_{v}$. We have that the spectral position
probability,
\begin{equation}
P\left(N_{\Omega^{(v)}}=j\right):=\lim_{N\rightarrow\infty}\frac{\left|\left\{ n\le N\,:\,f_{n}\,\textrm{is generic and }N_{\Omega_{n}^{\left(v\right)}}(k_{n})=j\right\} \right|}{\left|\left\{ n\le N\,:\,f_{n}\,\textrm{is generic}\right\} \right|}\label{eq:Prob-Dist-of-Spectral-Position}
\end{equation}
is well defined. If we further assume that $d_{v}>2$ then
\begin{enumerate}
\item $P\left(N_{\Omega^{(v)}}=j\right)$ is supported in the set $j\in\left\{ 1,...,d_{v}-1\right\} $.
\item $P\left(N_{\Omega^{(v)}}=j\right)$ is symmetric around $\frac{d_{v}}{2}$,
i.e., $P\left(N_{\Omega^{(v)}}=j\right)=P\left(N_{\Omega^{(v)}}=d_{v}-j\right)$.
\end{enumerate}
If $d_{v}=1$ then $P\left(N_{\Omega^{(v)}}=j\right)=\delta_{j,1}$.
\end{prop}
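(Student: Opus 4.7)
The plan is to express the spectral position at $v$ as an explicit function of the local angles $\alpha_e := k_n l_e \bmod \pi$ on the edges of $\Omega_n^{(v)}$, where $l_e$ denotes the distance from $v$ to the nearest Neumann point on edge $e$. I will then invoke the Barra-Gaspard equidistribution of eigenvalues on the secular manifold to obtain a well-defined limiting distribution, and deduce the symmetry from the involution $\alpha_e \mapsto \pi - \alpha_e$. The case $d_v = 1$ is immediate: for $k_n > \pi/L_{\min}$, Lemma \ref{lem:High-eigenvalue} implies $\Omega_n^{(v)}$ is a path graph, whence Lemma \ref{lem:Spectral-pos-equals-nodal-count}(\ref{enu:lem-Spectral-pos-equals-nodal-count-2}) gives $N_{\Omega_n^{(v)}}(k_n) = 1$, so that $P(N_{\Omega^{(v)}} = j) = \delta_{j,1}$.

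For $d_v > 2$, Lemma \ref{lem:High-eigenvalue} shows that for all but finitely many $n$, $\Omega_n^{(v)}$ is a star graph with $d_v$ edges meeting at $v$. On each such edge one has $f_n|_e(x) = A_e \cos(k_n(x - l_e))$ (with $x = 0$ at $v$), so that continuity of $f_n$ at $v$, the Neumann condition $\sum_{e \in \E_v} f_n'(0) = 0$, and genericity $f_n(v) \neq 0$ together yield the local secular equation
\[
\sum_{e \in \E_v} \tan(k_n l_e) = 0 .
\]
Since $k_n l_e \in (0, \pi)$, a nodal point on the $e$-th ray occurs iff $\cos(k_n l_e) < 0$, and Lemma \ref{lem:Spectral-pos-equals-nodal-count}(\ref{enu:lem-Spectral-pos-equals-nodal-count-1}) then gives
\[
N_{\Omega_n^{(v)}}(k_n) = \bigl|\{e \in \E_v : k_n l_e > \pi/2\}\bigr| .
\]
The support claim $j \in \{1, \ldots, d_v - 1\}$ now follows immediately: the secular equation forbids all $\tan(k_n l_e)$ from having the same sign.

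Existence of the limit in (\ref{eq:Prob-Dist-of-Spectral-Position}) will follow by invoking the Barra-Gaspard equidistribution theorem, which underlies the proof of Proposition \ref{prop:Proportion-Morse-generic} and is developed in \cite{AloBanBer_cmp18}: the phase vector $(k_n L_e \bmod \pi)_{e \in \E}$ equidistributes, along the generic indices, with respect to an explicit absolutely continuous measure on a secular submanifold of $\T^{|\E|}$. The local angles $(\alpha_e)_{e \in \E_v}$ are smooth implicit functions of this global phase vector, so their joint distribution pushes forward to a well-defined measure $\mu^{(v)}$ on $\{(\alpha_e) \in (0,\pi)^{d_v} : \sum_e \tan \alpha_e = 0\}$; integrating the indicator of $|\{e : \alpha_e > \pi/2\}| = j$ against $\mu^{(v)}$ yields the claimed probability. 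For the symmetry, the involution $\sigma : (\alpha_e) \mapsto (\pi - \alpha_e)$ preserves the secular constraint (each summand flips sign) and bijectively interchanges the level sets $\{j\}$ and $\{d_v - j\}$ of $|\{e : \alpha_e > \pi/2\}|$, so the claim reduces to showing that $\sigma$ preserves $\mu^{(v)}$. I expect the main obstacle to be precisely this last step: it requires distilling a local symmetry of the Barra-Gaspard density at $v$ out of the global secular function on $\T^{|\E|}$, and verifying the corresponding Jacobian identity is the technical heart of the proof.
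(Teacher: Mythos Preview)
The paper does not supply a proof of this proposition; it is quoted from \cite{AloBan_Neumann}, with the only methodological hint being that the arguments parallel those of \cite{AloBanBer_cmp18}. Your outline is exactly that parallel: reducing $N_{\Omega_n^{(v)}}(k_n)$ to $\phi\bigl(f_n|_{\Omega_n^{(v)}}\bigr)=\bigl|\{e\in\E_v:k_nl_e>\pi/2\}\bigr|$ via Lemma~\ref{lem:Spectral-pos-equals-nodal-count} and the star secular relation $\sum_e\tan(k_nl_e)=0$, invoking Barra--Gaspard equidistribution on the secular manifold for existence of the limit, and deducing the symmetry from the local involution $\alpha_e\mapsto\pi-\alpha_e$. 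That you have isolated the correct structural ingredient is further confirmed by observing that the \emph{same} involution simultaneously yields the symmetry $\zeta^{(v)}(x)=\zeta^{(v)}(1-x)$ of Proposition~\ref{prop:rho-distribution}, since for a star Neumann domain $\rho(\Omega_n^{(v)})/\pi=\frac{1}{\pi d_v}\sum_{e\in\E_v}\alpha_e$. Your caveat about the remaining obstacle is well placed: the substantive content of the argument in \cite{AloBan_Neumann} is precisely to exhibit a global measure-preserving involution of the secular torus (in the spirit of the conjugation symmetry used in \cite{AloBanBer_cmp18}) that descends to $\alpha_e\mapsto\pi-\alpha_e$ at each vertex.
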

By the proposition the support and the symmetry of the spectral position
probability depend on the degree of the vertex. Yet, vertices of the
same degree, but from different graphs may have different probability
distributions as is demonstrated in Figure \ref{fig: graph N dist},(iii).
In addition, we show in Figure \ref{fig: graph N dist},(vi) how the
conditional probability distribution of $\rho\left(\Omega\right)$
depends on the value of the spectral position $N_{\Omega}$ (compare
with the bounds (\ref{eq:Rho-lower-bound}),(\ref{eq:Rho-upper-bound})).

\begin{figure}[h]
\begin{centering}
(i) $\quad$$\Gamma_{1}$\includegraphics[width=0.3\textwidth]{s3s5-eps-converted-to.pdf}
\hfill{}(ii) $\quad$$\Gamma_{2}$\includegraphics[width=0.25\textwidth]{41man-eps-converted-to.pdf}
\par\end{centering}
\begin{centering}
(iii)\includegraphics[width=0.7\textwidth]{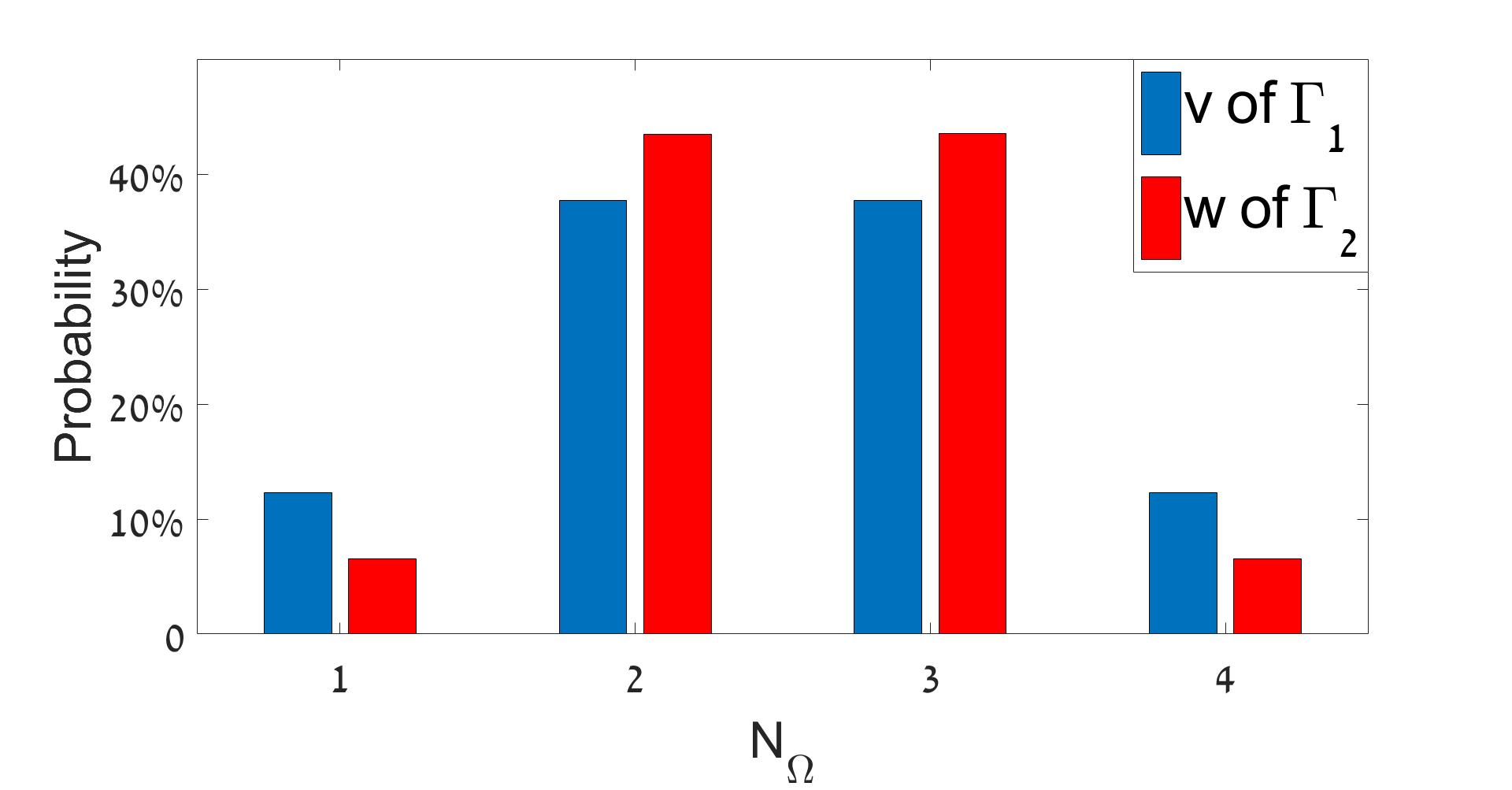}
\par\end{centering}
\begin{centering}
(iv)\includegraphics[width=0.7\textwidth]{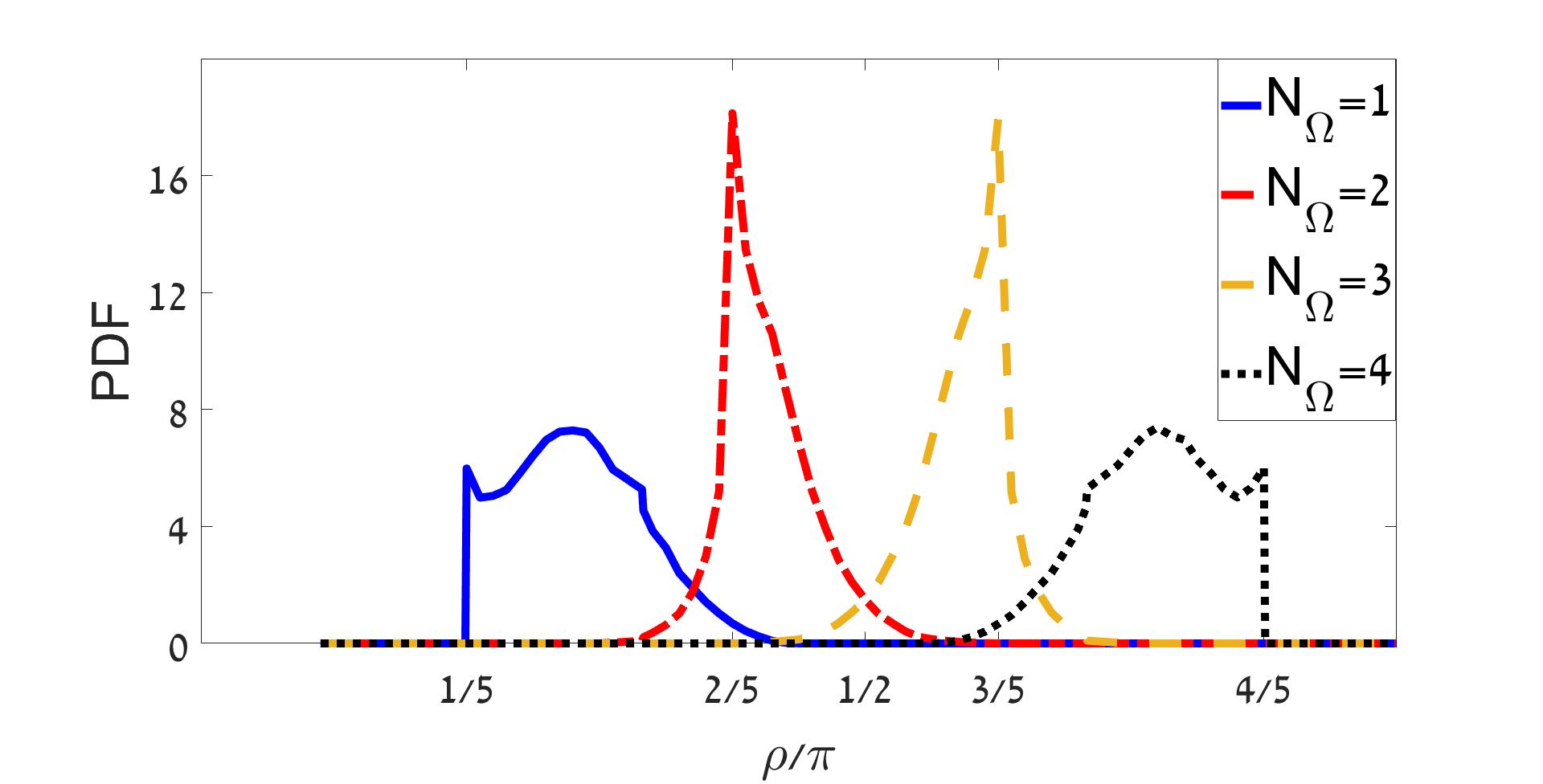}
\par\end{centering}
\centering{}\caption{(i) $\Gamma_{1}$, with vertex $v$ of degree $5$. (ii) $\Gamma_{2}$,
with vertex $w$ of degree $5$. (iii) The spectral position probability
$P\left(N_{\Omega^{(v)}}=j\right)$ for $v$ of $\Gamma_{1}$ compared
with $P\left(N_{\Omega^{(w)}}=j\right)$ for $w$ of $\Gamma_{2}$.
(iv) A probability distribution function of $\frac{\rho}{\pi}$-values
for the $\Gamma_{1}$ Neumann domains which contain $v$, conditioned
on the value of the spectral position $N_{\Omega_{n}^{\left(v\right)}}$.\protect \\
All the numerical data was calculated for the first $10^{6}$ eigenfunctions
for a choice of rationally independent lengths.}
\label{fig: graph N dist}
\end{figure}

\section{Neumann count}

In this section we present bounds on the number of Neumann points
and provide some properties of the probability distribution of this
number.
\begin{defn}
Let $\Gamma$ be a nontrivial standard graph and $\{f_{n}\}_{n=0}^{\infty}$
a complete set of its eigenfunctions. Denote by $\mu_{n}:=\mu(f_{n})$
and $\phi_{n}:=\phi(f_{n})$ the numbers of Neumann points and nodal
points respectively. We call the sequences $\{\mu_{n}\},\{\phi_{n}\}$
the Neumann count and nodal count, and the normalized quantities $\omega_{n}:=\mu_{n}-n,\,\,\,\sigma_{n}:=\phi_{n}-n$
are called the Neumann surplus and nodal surplus.
\end{defn}
\begin{prop}
\cite{AloBan_Neumann}\label{prop:Nodal-Neumann-bounds} Let $\Gamma$
be a nontrivial standard graph. Let $f_{n}$ be the $n^{\textrm{th}}$
eigenfunction of $\Gamma$ and assume it is generic. We have the following
bounds:
\begin{equation}
1-\beta\le\sigma_{n}-\omega_{n}\le\beta-1+\left|\d\Gamma\right|,\label{eq:Nodal-Neumann-bounds}
\end{equation}
and
\begin{equation}
1-\beta-\left|\d\Gamma\right|\leq\omega_{n}\leq2\beta-1,\label{eq:Neumann-Surplus-bounds}
\end{equation}
where $\beta=\left|\E\right|-\left|\V\right|+1$ is the first Betti
number of $\Gamma$.
\end{prop}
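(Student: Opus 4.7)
The plan is to reduce the two displays $1-\beta \le \sigma_n - \omega_n \le \beta - 1 + |\d\Gamma|$ and $1-\beta-|\d\Gamma| \le \omega_n \le 2\beta - 1$ to (i) a per-Neumann-domain inequality supplied by Proposition \ref{prop:Critical-and-nodal-points-graph}(\ref{enu:Critical-and-nodal-points-graph-3}), (ii) an Euler-type identity relating the total number of Neumann domains to $\mu_n$ and the Betti numbers of the individual Neumann domains, and (iii) the classical nodal surplus bounds $0 \le \sigma_n \le \beta$ for generic eigenfunctions on standard graphs (Berkolaiko, Schapotschnikow, Band).

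First I would set up the combinatorial bookkeeping. Since $f_n$ is generic, its Neumann points lie only in edge interiors; inserting them as degree-two vertices yields a refined graph with $|\V|+\mu_n$ vertices, $|\E|+\mu_n$ edges, and unchanged Betti number $\beta$. Viewing the Neumann domains $\Omega$ of $f_n$ as the connected subgraphs into which this refined graph splits, every edge lies in exactly one $\Omega$; every interior Neumann point is a boundary vertex of exactly two $\Omega$'s; every vertex of $\d\Gamma$ is a boundary vertex of exactly one $\Omega$; and every interior vertex of $\Gamma$ sits in the interior of exactly one $\Omega$ (here one uses the genericity clause that interior vertices are not extrema). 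Combining these counts with the Euler identity $V_\Omega - E_\Omega = 1 - \beta_\Omega$ applied to each connected $\Omega$ produces the key relation
\begin{equation*}
N_{\mathrm{nd}} := \#\{\text{Neumann domains of } f_n\} \,=\, 1-\beta+\mu_n+\sum_{\Omega} \beta_\Omega ,
\end{equation*}
together with $\sum_{\Omega} |\d\Omega| = 2\mu_n + |\d\Gamma|$.

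Next, Proposition \ref{prop:Critical-and-nodal-points-graph}(\ref{enu:Critical-and-nodal-points-graph-3}) gives $1 \le \phi(f_n|_\Omega) \le \beta_\Omega + |\d\Omega| - 1$ for each Neumann domain $\Omega$. Summing the lower bound over $\Omega$ yields $\phi_n \ge N_{\mathrm{nd}} \ge 1-\beta+\mu_n$, so $\sigma_n - \omega_n \ge 1-\beta$. Summing the upper bound and substituting the two identities above produces $\phi_n \le \mu_n + \beta - 1 + |\d\Gamma|$, hence $\sigma_n - \omega_n \le \beta - 1 + |\d\Gamma|$; this establishes \eqref{eq:Nodal-Neumann-bounds}. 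The second pair of bounds \eqref{eq:Neumann-Surplus-bounds} then follows by feeding the nodal surplus bounds $0 \le \sigma_n \le \beta$ into \eqref{eq:Nodal-Neumann-bounds}: one obtains $\omega_n \le \sigma_n + \beta - 1 \le 2\beta - 1$ and $\omega_n \ge \sigma_n - \beta + 1 - |\d\Gamma| \ge 1 - \beta - |\d\Gamma|$.

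The only real obstacle is the Euler-style bookkeeping --- one must be careful about which vertex types are shared between Neumann domains and which are not, and genericity is used crucially here to rule out extremal points at interior vertices of $\Gamma$ (which could otherwise spoil the identity for $N_{\mathrm{nd}}$). Once this identity is in place, the remainder is straightforward arithmetic on top of Proposition \ref{prop:Critical-and-nodal-points-graph}(\ref{enu:Critical-and-nodal-points-graph-3}) and the classical nodal surplus bounds.
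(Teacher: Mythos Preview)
Your proposal is correct and matches the paper's approach: the paper explicitly states that \eqref{eq:Neumann-Surplus-bounds} follows by combining \eqref{eq:Nodal-Neumann-bounds} with the nodal surplus bounds $0\le\sigma_n\le\beta$, which is precisely your step (iii). For \eqref{eq:Nodal-Neumann-bounds} itself the paper defers to \cite{AloBan_Neumann}, but your argument---summing the per-domain inequality of Proposition~\ref{prop:Critical-and-nodal-points-graph}(\ref{enu:Critical-and-nodal-points-graph-3}) and using the Euler identities $N_{\mathrm{nd}}=1-\beta+\mu_n+\sum_\Omega\beta_\Omega$ and $\sum_\Omega|\partial\Omega|=2\mu_n+|\partial\Gamma|$---is exactly the natural route the paper sets up, and your bookkeeping is correct.
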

Moreover, both quantities $\sigma_{n}-\omega_{n}$ and $\omega_{n}$
have well defined probability distributions, as stated in what follows.

\begin{prop}
\cite{AloBan_Neumann} \label{prop:Nodal-Neumann-Distribution}~\\
\begin{enumerate}
\item \label{enu:Nodal-Neumann-Dist-1}The difference between the Neumann
and nodal surplus has a well defined probability distribution given
by
\begin{equation}
P\left(\sigma-\omega=j\right)=\lim_{N\rightarrow\infty}\frac{\left|\left\{ n\le N\,:\,f_{n}\,\textrm{is generic and }\sigma_{n}-\omega_{n}=j\right\} \right|}{\left|\left\{ n\le N\,:\,f_{n}\,\textrm{is generic}\right\} \right|}.\label{eq:prop-Nodal-Neumann-Distribution}
\end{equation}
Furthermore, it is symmetric around $\frac{1}{2}\left|\d\Gamma\right|$,\\
 i.e., $P\left(\sigma-\omega=j\right)=P\left(\sigma-\omega=\left|\d\Gamma\right|-j\right).$
\item \label{enu:Nodal-Neumann-Dist-2}The Neumann surplus has a well defined
probability distribution which is symmetric around $\frac{1}{2}\left(\beta-\left|\d\Gamma\right|\right).$
\end{enumerate}
\end{prop}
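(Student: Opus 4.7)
The plan is to invoke the secular-manifold equidistribution framework for standard quantum graphs, the same machinery that yields the distribution and symmetry of the nodal surplus in \cite{AloBanBer_cmp18}. Under rational independence of $\vec{L}$, the normalized eigenphase vectors $\vec{\kappa}_n := (k_n L_e/\pi \bmod 1)_{e \in \E}$ corresponding to generic eigenvalues are equidistributed on a secular manifold $\Sigma \subset \torus$ with respect to a canonical probability measure $\meas$. Any function on $\Sigma$ that agrees with $\sigma_n - \omega_n$ (respectively $\omega_n$) along the equidistributed sequence then pins down the limiting distribution as its pushforward under $\meas$.

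My first step is therefore to express both $\sigma_n - \omega_n = \phi_n - \mu_n$ and $\omega_n = \mu_n - n$ as bounded integer-valued functions on $\Sigma$. On each edge a generic eigenfunction has the form $f_n|_e(x) = B_e\cos(k_n x + \varphi_e)$, and the interior zero count $\phi_n^{(e)}$ and interior extremum count $\mu_n^{(e)}$ each equal $k_nL_e/\pi$ plus an $O(1)$ correction determined by the fractional parts of $\varphi_e/\pi$ and $(\varphi_e+k_nL_e)/\pi$. These fractional parts are exactly the data recorded by $\vec{\kappa}_n$ together with the generically unique eigenvector assignment on $\Sigma$. Summing over edges, the difference $\phi_n - \mu_n$ becomes a function $g(\vec{\kappa}_n)$; combining the Weyl law $n = k_n|\Gamma|/\pi + O(1)$ with a similar edge-wise expansion of $\mu_n$ (the linear-in-$k$ parts cancel since $\sum_e L_e = |\Gamma|$) writes $\omega_n = h(\vec{\kappa}_n)$ for a second bounded integer-valued function $h$. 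Boundedness of $g$ and $h$ is guaranteed by Proposition~\ref{prop:Nodal-Neumann-bounds}. Existence of both limiting distributions then follows from equidistribution as the pushforwards $g_*\meas$ and $h_*\meas$, with non-generic eigenfunctions contributing zero density by Proposition~\ref{prop:Proportion-Morse-generic}.

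The symmetry in (\ref{enu:Nodal-Neumann-Dist-1}) I would derive by producing a measure-preserving involution $\tau_1$ of $\Sigma$ which swaps zeros and extrema on every edge. The natural candidate is the $\pi/2$ phase shift $\varphi_e \mapsto \varphi_e - \pi/2$, which turns $\cos(k_nx+\varphi_e)$ into $\sin(k_nx+\varphi_e)$ and hence interchanges interior zeros with interior extrema. At a degree-one vertex the Neumann condition forces the endpoint to be an extremum; after the shift it forces the endpoint to be a zero instead, producing exactly one mismatch per boundary edge. Summing these contributions yields $g \circ \tau_1 = |\d\Gamma| - g$, which is precisely the claimed symmetry. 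Verifying that $\tau_1$ preserves $\Sigma$ and $\meas$ amounts to checking that the vertex-matching conditions are invariant under the uniform phase shift; this is the main technical step for part (\ref{enu:Nodal-Neumann-Dist-1}).

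For part (\ref{enu:Nodal-Neumann-Dist-2}) my strategy is to combine the $\sigma - \omega$ symmetry with the nodal-surplus symmetry $\sigma \circ \tau_0 = \beta - \sigma$ established in \cite{AloBanBer_cmp18}, where $\tau_0$ is the ``time-reversal'' involution on $\Sigma$. If one can produce a single measure-preserving involution $\tau$ realising simultaneously $\sigma \circ \tau = \beta - \sigma$ and $(\sigma - \omega) \circ \tau = |\d\Gamma| - (\sigma - \omega)$, subtracting gives $\omega \circ \tau = (\beta - |\d\Gamma|) - \omega$, yielding symmetry of the Neumann surplus around $(\beta - |\d\Gamma|)/2$. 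The principal obstacle is this reconciliation: the two natural involutions $\tau_0$ (time-reversal) and $\tau_1$ (the $\pi/2$ phase shift) act differently on the eigenphases, so producing a common involution—or, alternatively, checking directly that $h$ is antisymmetric about $(\beta - |\d\Gamma|)/2$ with respect to $\meas$ without factoring through $\tau_0$ and $\tau_1$—is where the analysis of the vertex-matching geometry on $\Sigma$ becomes delicate and forms the technical heart of the proof.
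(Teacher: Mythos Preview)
Your overall framework—expressing $\sigma-\omega$ and $\omega$ as bounded integer-valued functions on the secular manifold and invoking the equidistribution machinery of \cite{AloBanBer_cmp18}—is exactly what the paper points to (it explicitly says the proof ``uses similar techniques'' to \cite{AloBanBer_cmp18} and defers details to \cite{AloBan_Neumann}). So the existence of the limiting distributions is handled correctly.

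The genuine gap is in your proposed symmetry mechanism. The uniform $\pi/2$ phase shift $\tau_1$ does \emph{not} preserve the Neumann vertex conditions at interior vertices, and hence does not map $\Sigma$ to itself. Concretely, if on each edge $f|_{e}(x)=B_e\cos(kx+\varphi_e)$ with $x=0$ at a vertex $v$ of degree $d_v>1$, continuity forces $B_e\cos\varphi_e=c$ for all $e\in\E_v$ and Kirchhoff gives $\sum_{e\in\E_v}B_e\sin\varphi_e=0$. After your shift these become $B_e\sin\varphi_e=\text{const}$ and $\sum_{e\in\E_v}B_e\cos\varphi_e=0$; but the latter sum equals $d_v c$, which vanishes only when $f(v)=0$, i.e.\ precisely the non-generic case you have excluded. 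So $\tau_1$ is not an involution of $\Sigma$, and the argument for part~(\ref{enu:Nodal-Neumann-Dist-1}) collapses at the step you flagged as ``the main technical step.''

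This also means your worry in part~(\ref{enu:Nodal-Neumann-Dist-2}) about reconciling two different involutions is misdirected: there is no second involution $\tau_1$ to reconcile. The approach that actually works is to use a \emph{single} measure-preserving involution on $\Sigma$—the same inversion $\vec{\kappa}\mapsto -\vec{\kappa}$ (equivalently complex conjugation of the bond scattering data) that yields $\sigma\mapsto\beta-\sigma$ in \cite{AloBanBer_cmp18}—and to compute directly how $\mu$ (or equivalently $\omega$) transforms under it. The edge-by-edge bookkeeping of extremal points under this inversion produces the boundary correction $|\partial\Gamma|$, and then both symmetries in the proposition drop out simultaneously from the one involution, with no reconciliation needed.
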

This proposition is in the spirit of the recently obtained result
for the distribution of the nodal surplus \cite{AloBanBer_cmp18}.
It was shown in \cite{AloBanBer_cmp18} that the nodal surplus, $\sigma$,
has a well defined probability distribution which is symmetric around
$\frac{1}{2}\beta$. The proof of Proposition \ref{prop:Nodal-Neumann-Distribution}
uses similar techniques to the proof of this latter result and appears
in \cite{AloBan_Neumann}.

The proposition above also has an interesting meaning in terms of
inverse problems. It is common to ask what one can deduce on a graph
out of its nodal count sequence, $\{\phi_{n}\}$ \cite{BanOreSmi_pspm08,BanShaSmi_jpa06,OreBan_jpa12}.
It was found in \cite{Ban_ptrsa14} that the nodal count distinguishes
tree graphs from others. The result already mentioned in \cite{AloBanBer_cmp18}
took a step further by showing that the nodal surplus distribution
reveals the graph's first Betti number, as twice the expected value
of the nodal surplus. However, it should be noted that all tree graphs
have the same nodal count, so that one cannot distinguish between
different trees in terms of the nodal count. Proposition \ref{prop:Nodal-Neumann-Distribution}
shows that the Neumann count, $\{\mu_{n}\}$ contains information
on the size of the graph's boundary, $\left|\partial\Gamma\right|$.
In particular, this enables the distinction between some tree graphs,
which was not possible before.

Different tree graphs with the same boundary size, $\left|\partial\Gamma\right|$,
have the same expected value for their Neumann count and are not distinguishable
in this sense. Nevertheless, we may wonder whether the boundary size
of a tree graph fully determines the probability distribution of its
Neumann count. We do not have an answer to this question yet and carry
on this exploration.

We end this section by noting that the bounds obtained in (\ref{eq:Neumann-Surplus-bounds})
on the Neumann surplus $\omega_{n}$ seem not to be strict, as we
observed in many examples. Furthermore, we conjecture the following
sharper bounds on $\omega_{n}$.
\begin{conjecture}
\label{conj:Neumann-surplus-bounds} The Neumann surplus is bounded
by
\[
-1-\left|\d\Gamma\right|\le\omega_{n}\le\beta+1.
\]
\end{conjecture}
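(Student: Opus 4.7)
My plan is to reduce the conjecture to a bound on the Neumann surplus analogous to the sharp bound $0\le\sigma_n\le\beta$ known for the nodal surplus. I start with an edge-by-edge decomposition: writing $f|_e(x) = B_e\cos(kx+\varphi_e)$, the interior Neumann and nodal points of $f|_e$ strictly alternate, so their counts $m_e$ and $n_e$ satisfy $m_e - n_e \in \{-1, 0, +1\}$, the precise sign being dictated by the phases of $f|_e$ at the two endpoints of $e$. Summing over edges yields
\[
\omega_n - \sigma_n \;=\; \mu_n - \phi_n \;=\; \sum_{e\in\E}\left(m_e-n_e\right).
\]
At each interior vertex $v$ the phase data $\{\varphi_e(v)\}_{e\in\E_v}$ is constrained by continuity, $B_e\cos(\varphi_e(v)) = f(v)$ for every $e\in\E_v$, and by the Kirchhoff condition $\sum_{e\in\E_v} B_e \sin(\varphi_e(v)) = 0$. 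These convert the above signed sum into a vertex-supported combinatorial quantity that can, in principle, be bounded locally.

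The next step is to exploit the symmetry of the Neumann surplus distribution about $\tfrac{1}{2}(\beta-|\partial\Gamma|)$ established in Proposition \ref{prop:Nodal-Neumann-Distribution}, and to note that the conjectured bounds are symmetric about the same point with radius $\tfrac{1}{2}(\beta+|\partial\Gamma|)+1$. For the generic eigenfunctions whose surpluses populate the support of this distribution, it therefore suffices to prove just one side, say $\omega_n \le \beta+1$. I would attempt to port the secular-manifold framework of \cite{AloBanBer_cmp18} — which proves $0\le\sigma_n\le\beta$ — to the Neumann count: parameterize generic eigenfunctions by quasi-momenta on $\torus$, realize $\omega_n$ as a signed intersection index of a $\beta$-dimensional torus orbit with a codimension-zero stratified subvariety built from the edge-alternation structure above, and bound this index by the first Betti number of the ambient torus plus a boundary correction coming from $\partial\Gamma$. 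Finally the finitely many non-generic eigenfunctions in the spectrum can be handled by a perturbation/limiting argument from the generic case.

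The principal obstacle is obtaining the sharp factor. A naive intersection count recovers only the weaker $\omega_n\le 2\beta-1$ from Proposition \ref{prop:Nodal-Neumann-bounds}. Gaining the factor of two, down to $\beta+1$, requires exploiting the fact that at each interior vertex $v$ the Kirchhoff condition is a genuine codimension-one relation on the phase vector: it effectively halves the dimension of the configurations producing a $+1$ contribution. Translating this dimension-reduction into a rigorous topological bound on the intersection index, while also tracking the extra unit on each side — which ought to come from a single special stratum that the Kirchhoff relation fails to kill, most plausibly associated with a vertex of extremal value $|f(v)|\approx\max_e B_e$ — is the technically delicate heart of the problem. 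I expect this final "$+1$" accounting is what separates the conjecture from being a theorem, and it is where my proposed argument would need the most care.
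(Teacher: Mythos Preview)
The statement you are attempting to prove is presented in the paper as a \emph{conjecture}, not a theorem: the authors write that the bounds in Proposition~\ref{prop:Nodal-Neumann-bounds} ``seem not to be strict, as we observed in many examples'' and then state Conjecture~\ref{conj:Neumann-surplus-bounds} without proof, remarking only that its validity would imply a correlation between $\sigma_n$ and $\omega_n$. There is therefore no proof in the paper to compare your proposal against; you are sketching an attack on an open problem, and indeed you say so yourself in your last paragraph.

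Two concrete gaps in your outline are worth flagging beyond the one you already identify. First, the reduction to one side via the symmetry of Proposition~\ref{prop:Nodal-Neumann-Distribution} is not valid as stated: that symmetry is a statement about the \emph{limiting distribution} of $\omega_n$ over generic eigenfunctions, so a pointwise bound $\omega_n\le\beta+1$ would only imply that the set of $n$ with $\omega_n<-1-|\partial\Gamma|$ has density zero, not that it is empty. To get a pointwise two-sided bound you would need a pointwise involution on the secular manifold exchanging the two tails, not merely the distributional symmetry. Second, and this you recognise, the intersection-index heuristic as sketched only recovers the bound $2\beta-1$ already in Proposition~\ref{prop:Nodal-Neumann-bounds}; the passage to $\beta+1$ via ``the Kirchhoff condition effectively halves the dimension'' is exactly the missing idea that keeps this a conjecture. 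Your edge-alternation bookkeeping and the secular-manifold framing are reasonable starting points, but at the time of the paper's writing no one had closed that gap.
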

Proving the bounds (\ref{eq:Neumann-Surplus-bounds}) on $\omega_{n}$
is done by combining the bounds on $\sigma_{n}-\omega_{n}$ (\ref{eq:Nodal-Neumann-bounds})
with the bounds $0\le\sigma_{n}\le\beta$ \cite{BerWey_ptrsa14}.
The bounds on both $\sigma_{n}-\omega_{n}$ and $\sigma_{n}$ are
known to be strict. Hence, if indeed the bounds on $\omega_{n}$ are
not strict, it implies that the nodal surplus, $\sigma_{n}$, and
the Neumann surplus, $\omega_{n}$, are correlated when considered
as random variables, which is an interesting result on its own.\vspace{4mm}

\part{Summary}

In this part we summarize the main results of this paper and focus
on the comparison between analogous statements on graphs and manifolds.
This is emphasized by using common terminology and notations for both
graphs and manifolds.

Let $f$ be an eigenfunction corresponding to the eigenvalue $\lambda$
and $\Omega$ be a Neumann domain of $f$. On manifolds, we have that
$\Omega$ and $\left.f\right|_{\Omega}$ are of a rather simple form;
$\Omega$ is simply connected; $\left.f\right|_{\Omega}$ has only
two nodal domains and its critical points are all located on $\partial\Omega$
(Theorem \ref{thm:topological-properties-manifolds}). On graphs,
the situation is similar, as almost all Neumann domains are either
star graphs or path graphs; it is possible to have other Neumann domains,
and even non simply connected ones, only if $\lambda$ is small enough
(Lemma \ref{lem:High-eigenvalue}). For graphs, $\left.f\right|_{\Omega}$
has two nodal domains if $\Omega$ is a path graph, but otherwise
may have more, with a global bound on this number (Proposition \ref{prop:Critical-and-nodal-points-graph},(\ref{enu:Critical-and-nodal-points-graph-3})).

The most basic property of Neumann domains is that $\left.f\right|_{\Omega}$
is a Neumann eigenfunction of $\Omega$ (Manifolds - Lemma \ref{lem:Restriction-is-Neumann-eigenfunction-manifold};
Graphs - Lemma \ref{lem:Restriction-is-Neumann-eigenfunction-graph}).
The eigenvalue of $\left.f\right|_{\Omega}$ is also $\lambda$ and
the interesting question is to find out what is the position of $\lambda$
in the spectrum of $\Omega$ - a quantity which we denote by $N_{\Omega}(\lambda)$
(Definition \ref{def:Spectral-Position}). The intuitive feeling at
the beginning of the Neumann domain study was that generically, $N_{\Omega}(\lambda)=1$
or that at least the spectral position gets low values.

The general problem of determining the spectral position is quite
hard for manifolds. The most general result we are able to provide
for manifolds (Proposition \ref{prop:Rho-upper-bound-manifold}) is
a lower bound given in terms of the geometric quantity $\rho$, which
is a normalized area to perimeter ratio (Definition \ref{def:Area-to-Perimeter-Manifolds}).
Interestingly, this result allows to estimate the spectral position
numerically; a numerical calculation of $\rho$ is rather easy compared
to the involved calculation of the spectrum of an arbitrary domain,
which is needed to determine a spectral position. This numerical method
allows to refute the belief that for manifolds, generically, $N_{\Omega}(\lambda)=1$.
For graphs, the quantity $\rho$ (Definition \ref{def:Area-to-perimeter-graphs}))
allows to bound the spectral position from both sides, for almost
all Neumann domains (Proposition \ref{prop:Rho-bounds-spectral-position}).
Two additional results we have for the spectral position on graphs
(but not for manifolds) are as follows. First, the spectral position
of $\Omega$ is given explicitly by the nodal count of $\left.f\right|_{\Omega}$,
and this yields an upper bound on the spectral position (Lemma \ref{lem:Spectral-pos-equals-nodal-count}).
Second, the spectral position has a limiting distribution which is
symmetric (Proposition \ref{prop:Prob-Dist-of-Spectral-Position}).
Another point of comparison is that an upper bound on the spectral
position, which we have for graphs, does not exist for manifolds.
We show by means of an example that the spectral position is unbounded
in the manifold case. This example is given in terms of separable
eigenfunctions on the torus. For this example, we show that although
the spectral position of half of the Neumann domains is unbounded,
it equals one for the other half (Theorem \ref{thm:Spectral-pos-star-and-lens}).
This finding might imply that even though $N_{\Omega}(\lambda)=1$
does not hold generically, there might be a substantial proportion
of Neumann domains, for which it does hold (see e.g., (\ref{eq:Assumption-on-Spectral-position})).
This is indeed the case for graphs where the spectral position of
each path graph Neumann domain equals one, and all of those form a
substantial proportion of all Neumann domains (their number as well
as their total length increase with the eigenvalue).

Finally, we discuss the Neumann domain count. On manifolds we count
the number of Neumann domains, while on graphs we count the number
of Neumann points. There is also a connection between the Neumann
count and the nodal count. On manifolds, we have that the difference
between the Neumann count and half the nodal count is non-negative
(Corollary \ref{cor:Neumann-count-bound-by-nodal-count}). On graphs,
the difference between the Neumann count and the nodal count is bounded
from both sides (Proposition \ref{prop:Nodal-Neumann-bounds}). As
for the Neumann count itself, it makes sense to consider it with a
normalization: $\frac{\mu(f_{n})}{n}$ on manifolds and $\mu(f_{n})-n$
on graphs. For graphs we provide general bounds on $\omega_{n}=\mu(f_{n})-n$
(Proposition \ref{prop:Nodal-Neumann-bounds}), but believe that those
are not sharp and conjecture sharper bounds (Conjecture \ref{conj:Neumann-surplus-bounds}).
The validity of the conjecture would also imply a correlation between
the nodal and the Neumann counts. In addition, $\omega_{n}$ possesses
a limiting probability distribution which is symmetric (Proposition
\ref{prop:Nodal-Neumann-Distribution}). The expected value of this
distribution stores information on the size of the graph's boundary,
$\left|\partial\Gamma\right|$; an information that is absent from
the nodal count. Which other graph properties may be revealed by this
distribution is still to be found. Turning back to manifolds, we treat
separable eigenfunctions on the torus and for those derive the probability
distribution of $\frac{\mu(f_{n})}{n}$ (Proposition \ref{prop:Distribution-Neumann-Count-manifold}).
This is to be viewed as the beginning of the analysis of Neumann count
on manifolds. Two approaches in which some progress can be made are
the following. One is getting a Courant-like bound of the form $\mu(f_{n})\leq h(n)$,
with $h$ being possibly a linear function. The second would be studying
the asymptotic behaviour, and for example showing that $\limsup_{n\rightarrow\infty}\mu(f_{n})=\infty$.
Both approaches are related to analogous results on nodal domains.
The first is tied to the Courant bound for nodal domains (whose strict
version does not hold for the Neumann count). The second is based
on a series of works on asymptotic growth of the nodal count \cite{GhoRezSar_gafa13,JunZel_jdg16,JunZel_mann16,Zelditch_jst16}
(see full description in Section \ref{subsec:mu_n-over-n}) together
with the basic bound (\ref{eq:Neumann-bounded-by-nodal-manifold})
which relates the nodal count and the Neumann count.

\section*{Acknowledgments}

We thank Alexander Taylor for providing the python code \cite{python_Taylor}
we used to generate the figures of Neumann domains on manifolds and
calculate the distribution of $\rho$. The authors were supported
by ISF (Grant No. 494/14).

\appendix

\section{\label{sec:Basic-Morse-Theory}Basic Morse Theory}

This section brings some basic statements in Morse theory which are
useful for understanding the first part of the paper. For a more thorough
exposition, we refer the reader to \cite{BanHur_MorseHomology04}.
Throughout the appendix we take $(M,g)$ be a compact smooth Riemannian
manifold of a finite dimension. At some points of the appendix we
specialize for the two-dimensional case and mention explicitly when
we do so.
\begin{defn}
Let $f:M\rightarrow\R$ be a smooth function.
\begin{enumerate}
\item $f$ is a Morse function if at every critical point, $\bs p\in\mathscr{C}(f)$,
the Hessian matrix, $\hess f|_{\boldsymbol{p}}$, is non-degenerate,
i.e., it does not have any zero eigenvalues.
\item The Morse index $\lambda_{\bs p}$ of a critical point $\bs p\in\mathscr{C}(f)$
is the number of negative eigenvalues of the Hessian matrix, $\hess f|_{\boldsymbol{p}}$.
\end{enumerate}
\end{defn}
The following three propositions may be found in \cite{BanHur_MorseHomology04}.
\begin{prop}
\cite[ Lemma 3.2 and Corollary 3.3]{BanHur_MorseHomology04} \\
If $f$ is a Morse function then the critical points of $f$ are isolated
and $f$ has only finitely many critical points.
\end{prop}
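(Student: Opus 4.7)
The plan is to establish the two conclusions separately: first that each critical point is isolated (a purely local statement which uses the non-degeneracy of the Hessian), and then that there are only finitely many (a global statement which uses the compactness of $M$ together with the already-proven isolation).

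For the local part, fix a critical point $\boldsymbol{p}\in\mathscr{C}(f)$ and choose a smooth chart $(U,\varphi)$ around $\boldsymbol{p}$, identifying a neighborhood of $\boldsymbol{p}$ with an open set in $\mathbb{R}^{n}$. In these coordinates consider the map $F:=\nabla f$ (or, equivalently, $\mathrm{d}f$ viewed as a map into $\mathbb{R}^{n}$). Its differential at $\boldsymbol{p}$ is precisely $\hess f|_{\boldsymbol{p}}$, which by the Morse hypothesis is invertible. By the inverse function theorem, $F$ is a local diffeomorphism onto a neighborhood of $0\in\mathbb{R}^{n}$, so the preimage $F^{-1}(0)$ is exactly $\{\boldsymbol{p}\}$ on a sufficiently small open neighborhood of $\boldsymbol{p}$. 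Hence $\boldsymbol{p}$ is an isolated critical point. (Alternatively one can invoke the Morse Lemma and read off directly that in canonical quadratic coordinates the only zero of the gradient is the origin.)

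For the global part, suppose toward a contradiction that $\mathscr{C}(f)$ is infinite. Since $M$ is compact, any infinite subset has a limit point $\boldsymbol{q}\in M$, and by continuity of $\nabla f$ one has $\nabla f(\boldsymbol{q})=0$, so $\boldsymbol{q}\in\mathscr{C}(f)$. But then every neighborhood of $\boldsymbol{q}$ contains infinitely many critical points, contradicting the isolation established in the previous paragraph. Therefore $\mathscr{C}(f)$ is finite.

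The only step that is at all substantive is the application of the inverse function theorem to $\nabla f$ at $\boldsymbol{p}$; the rest is a standard compactness-plus-isolation argument. The one point that deserves a brief justification is that the Jacobian of $\nabla f$ at $\boldsymbol{p}$ really is the Hessian, which is true because $\boldsymbol{p}$ is a critical point and so the Christoffel-symbol correction terms vanish there, making the coordinate expression coincide with the intrinsic Hessian. No further technicalities arise.
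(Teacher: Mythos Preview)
Your proof is correct and follows the standard textbook route (inverse function theorem or Morse Lemma for isolation, then compactness for finiteness); this is essentially the argument given in the cited reference \cite{BanHur_MorseHomology04}. Note that the paper itself does not supply a proof of this proposition---it is stated as a quoted result from that source---so there is no in-paper proof to compare against.
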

Next, we consider the gradient flow $\varphi:\mathbb{R}\times\,M\rightarrow M$
defined by (\ref{eq:flow}). For a particular $\bs x\in M$ we call
the image of $\varphi:\mathbb{R}\times\,\bs x\rightarrow M$, a gradient
flow line. Note that a gradient flow line, $\{\varphi(t;\thinspace\bs x)\}_{t=-\infty}^{\infty}$
has a natural direction dictated by the order of the $t$ values.
\begin{prop}
\cite[Propositions 3.18, 3.19]{BanHur_MorseHomology04}\\
\begin{enumerate}
\item Any smooth real-valued function $f$ decreases along its gradient
flow lines. The decrease is strict at noncritical points.
\item Every gradient flow line of a Morse function $f$ begins and ends
at a critical point. Namely, for all $\bs x\in M$ both limits $\lim_{t\rightarrow\pm\infty}\varphi(t,\thinspace\bs x)$
exist and they are both critical points of $f$.
\end{enumerate}
\end{prop}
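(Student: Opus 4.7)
The first claim is a direct chain rule computation. Along any gradient flow line, the definition \eqref{eq:flow} gives $\partial_t \varphi(t,\bs x) = -\nabla f|_{\varphi(t,\bs x)}$, so
\[
\frac{d}{dt} f\bigl(\varphi(t,\bs x)\bigr) \;=\; \bigl\langle \nabla f|_{\varphi(t,\bs x)},\, \partial_t \varphi(t,\bs x) \bigr\rangle_g \;=\; -\bigl\|\nabla f|_{\varphi(t,\bs x)}\bigr\|_g^{\,2} \;\leq\; 0,
\]
with strict inequality precisely when $\varphi(t,\bs x) \notin \Cr$. This proves both assertions of part (1).

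For part (2) I would first verify that the flow is globally defined: since $M$ is compact, $-\nabla f$ is a complete vector field, so $\varphi(t,\bs x)$ exists for all $t \in \R$. Consider the forward limit (the backward case is symmetric). By part (1), $t \mapsto f(\varphi(t,\bs x))$ is monotonically decreasing, and since $f$ is continuous on the compact manifold $M$ it is bounded below. Hence $L := \lim_{t \to \infty} f(\varphi(t,\bs x))$ exists as a finite real number.

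Next I would study the $\omega$-limit set $\omega(\bs x) := \bigcap_{T \geq 0}\, \overline{\{\varphi(t,\bs x) : t \geq T\}}$. By compactness of $M$ it is nonempty, compact, and connected (the intersection of a nested family of nonempty compact connected sets). It is also invariant under $\varphi$. By continuity of $f$, every $\bs y \in \omega(\bs x)$ satisfies $f(\bs y) = L$; applying part (1) to the forward trajectory through $\bs y$, which remains inside $\omega(\bs x)$ by invariance, the function $f$ stays equal to $L$ along it, which by the strict-decrease statement forces $\bs y \in \Cr$. Thus $\omega(\bs x) \subset \Cr$.

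To finish I invoke the previous proposition: the critical points of the Morse function $f$ form a finite (hence discrete) set. A connected subset of a discrete set is a single point, so $\omega(\bs x) = \{\bs p\}$ for some $\bs p \in \Cr$, which is equivalent to $\lim_{t \to \infty} \varphi(t,\bs x) = \bs p$. The same argument with the reversed flow yields the backward limit. The main obstacle is the upgrade from accumulation at critical values to genuine convergence of the trajectory to a single point; for a general smooth $f$ one can have $\omega$-limit sets that are positive-dimensional continua of critical points along which the flow fails to converge. The Morse assumption is used in exactly one place, namely to guarantee isolated critical points, and combined with the topological fact that $\omega$-limit sets of flows on compact spaces are connected this rules out such pathologies.
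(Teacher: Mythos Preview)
Your proof is correct and follows the standard argument. The paper itself does not prove this proposition at all; it simply cites it from \cite[Propositions 3.18, 3.19]{BanHur_MorseHomology04} and moves on. So there is no ``paper's own proof'' to compare against, but what you wrote is essentially the textbook argument one finds in that reference: the chain rule for part~(1), and for part~(2) the combination of monotonicity of $f$ along trajectories, compactness of $M$, the standard properties of $\omega$-limit sets (nonempty, compact, connected, invariant), and finally the finiteness of $\Cr$ from the preceding proposition to force the $\omega$-limit set to a single point. Your closing remark correctly identifies where the Morse hypothesis enters.
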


\begin{prop}
[Stable/Unstable Manifold Theorem for a Morse Function]\cite[Theorem 4.2]{BanHur_MorseHomology04}\label{propstuns}\\
 Let $f$ be a Morse function and $\bs p\in\mathscr{C}(f)$. Then
the tangent space at $\bs p$ splits as
\[
T_{\bs p}M=T_{\bs p}^{s}M\thinspace\oplus\thinspace T_{\bs p}^{u}M,
\]
where the Hessian is positive definite on $T_{\bs p}^{s}M$ and negative
definite on $T_{\bs p}^{u}M$.

Moreover, the stable and unstable manifolds, (\ref{eq:Stable-Unstable-Def}),
are surjective images of smooth embeddings
\begin{align*}
T_{\bs p}^{s}M~ & \rightarrow~W^{s}(\bs p)~\subseteq~M\\
T_{\bs p}^{u}M~ & \rightarrow~W^{u}(\bs p)~\subseteq~M.
\end{align*}
Therefore, $W^{u}(\bs p)$ is a smoothly embedded open disk of dimension
$\lambda_{\bs p}$ and $W^{s}(\bs p)$ is a smoothly embedded open
disk of dimension $m-\lambda_{\bs p}$, where $m$ is the dimension
of $M$.
\end{prop}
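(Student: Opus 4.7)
The plan is to combine the Morse lemma with the classical stable/unstable manifold theorem for hyperbolic equilibria of a smooth flow. As a first step, I would invoke the Morse lemma to produce local coordinates $(x_1,\ldots,x_m)$ centered at $\bs p$ in which
\[
f(\bs x)-f(\bs p) \;=\; -\sum_{i=1}^{\lambda_{\bs p}} x_i^2 \;+\; \sum_{i=\lambda_{\bs p}+1}^{m} x_i^2 .
\]
In these coordinates $\hess f|_{\bs p}$ is diagonal with eigenvalues $\pm 2$, so the splitting $T_{\bs p}M=T_{\bs p}^{s}M\oplus T_{\bs p}^{u}M$ is immediate: the positive eigenspace has dimension $m-\lambda_{\bs p}$, the negative one has dimension $\lambda_{\bs p}$, and the Hessian is positive (resp.\ negative) definite on each factor by construction.

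Next, I would linearize the gradient flow $\dot{\bs x}=-\nabla f(\bs x)$ at $\bs p$. Using the expansion $\nabla f(\bs x)=\hess f|_{\bs p}\cdot\bs x+O(\|\bs x\|^{2})$ (the same starting point used in the proof of Proposition \ref{prop:angles-at-critical-pts}), the linear part of the flow at $\bs p$ is $-\hess f|_{\bs p}$. Since $f$ is Morse, this matrix has no eigenvalues on the imaginary axis, making $\bs p$ a hyperbolic equilibrium. Its contracting eigenspace corresponds to positive eigenvalues of $\hess f|_{\bs p}$, i.e.\ to $T_{\bs p}^{s}M$, and the expanding one to $T_{\bs p}^{u}M$. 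The Hadamard--Perron stable manifold theorem then yields smoothly embedded local invariant disks $W^{s}_{\mathrm{loc}}(\bs p)$ and $W^{u}_{\mathrm{loc}}(\bs p)$ of the asserted dimensions, each realized as a smooth graph over its tangent subspace and tangent to it at $\bs p$.

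Finally, I would globalize via the flow: set
\[
W^{u}(\bs p)=\bigcup_{t\ge 0}\varphi\!\left(t,\, W^{u}_{\mathrm{loc}}(\bs p)\right), \qquad W^{s}(\bs p)=\bigcup_{t\le 0}\varphi\!\left(t,\, W^{s}_{\mathrm{loc}}(\bs p)\right).
\]
By the preceding proposition every gradient flow line begins and ends at a critical point, so these sets agree with definition (\ref{eq:Stable-Unstable-Def}). Since $\varphi(t,\cdot)$ is a diffeomorphism for each finite $t$, the desired smooth embedding $T_{\bs p}^{u}M\to W^{u}(\bs p)$ is obtained by first parametrizing $W^{u}_{\mathrm{loc}}(\bs p)$ smoothly by a neighbourhood of $0$ in $T_{\bs p}^{u}M$ via the local theorem, then extending to all of $T_{\bs p}^{u}M$ by a smooth flow-time reparametrization in the radial direction; the symmetric construction handles $W^{s}(\bs p)$.

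The main obstacle is the Hadamard--Perron step --- producing smooth local stable and unstable manifolds at the hyperbolic fixed point $\bs p$. This is obtained by a contraction-mapping argument on a Banach space of candidate graphs over the (un)stable subspace, and promoting the resulting Lipschitz graph to a $C^{\infty}$ submanifold requires iterating the argument on higher-order difference quotients; this is the technical heart of the proof. A secondary point one must verify is that the flow-extended parametrization in the last step is globally injective and a smooth embedding (not merely an injective immersion): injectivity follows from the uniqueness of the backward limit along each flow line, and properness follows from the strict monotonicity of $f$ along non-constant gradient flow lines asserted in the preceding proposition.
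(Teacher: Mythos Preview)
The paper does not prove this proposition: it is quoted from \cite[Theorem 4.2]{BanHur_MorseHomology04} and used as a black box in the appendix, with no argument given. So there is no ``paper's own proof'' to compare against.

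That said, your outline is the standard route to this classical result and is essentially correct. A couple of minor cautions. First, in the Morse-lemma chart the Riemannian metric need not be Euclidean, so the gradient vector field in those coordinates is not simply $(\pm 2x_1,\ldots,\pm 2x_m)$; what matters, and what you use, is only that the linearization of $-\nabla f$ at $\bs p$ is $-\hess f|_{\bs p}$ viewed as a self-adjoint endomorphism of $T_{\bs p}M$, which is indeed hyperbolic since $f$ is Morse. Second, in the globalization step the delicate claim is that the flow-extended parametrization is an embedding and not merely an injective immersion. Your appeal to strict monotonicity of $f$ along nonconstant flow lines is the right idea: it guarantees that level sets of $f$ slice $W^{u}(\bs p)$ in compact pieces diffeomorphic to spheres, from which one deduces that the inverse of the parametrization is continuous in the subspace topology. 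You should state this argument rather than just assert ``properness''; without the Lyapunov function $f$ this step can fail for general hyperbolic flows.
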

Let us examine the implications of the results above in the particular
case of Morse functions on a two-dimensional manifold.
\begin{itemize}
\item If $\bs q$ is a maximum then $\lambda_{\bs q}=2$ and so $W^{u}(\bs q)$
is a two-dimensional open and simply connected set and $W^{s}(\bs q)=\{\bs q\}$.
\item If $\bs p$ is a minimum then $\lambda_{\bs p}=0$ and so $W^{s}(\bs p)$
is a two-dimensional open and simply connected set and $W^{u}(\bs p)=\{\bs p\}$.
\item If $\bs r$ is a saddle point then $\lambda_{\bs r}=1$ and so both
$W^{s}(\bs r)$ and $W^{u}(\bs r)$ are one-dimensional curves. Note
that $W^{s}(\bs r)\cap W^{u}(\bs r)=\{\bs r\}$ and so we get that
$W^{s}(\bs r)$ is a union of two gradient flow lines (actually even
Neumann lines) which end at $\bs r$. Similarly, $W^{u}(\bs r)$ is
a union of two gradient flow lines (Neumann lines) which start at
$\bs r$.
\end{itemize}
By Definition \ref{def:Neumann-Domains-and-Lines} we get that Neumann
domains are open two-dimensional sets and that the Neumann line set
is a union of one dimensional curves. Moreover, those sets are complementary.
Namely, the union of all Neumann domains together with the Neumann
line set gives the whole manifold \cite[Proposition 1.3]{BanFaj_ahp16}.\\

Next, we focus on a subset of the Morse functions, known as Morse-Smale
functions, described by the following two definitions.
\begin{defn}
We say that two sub-manifolds $M_{1},M_{2}\subset M$ intersect transversally
and write $M_{1}\pitchfork M_{2}$ if for every $\bs x\in M_{1}\cap M_{2}$
the tangent space of $M$ at $\bs x$ equals the sum of tangent spaces
of $M_{1}$ and $M_{2}$ at $\bs x$, i.e.
\begin{equation}
T_{\bs x}M=T_{\bs x}M_{1}+T_{\bs x}M_{2}.\label{eq:transversality-cond}
\end{equation}
This is also called the \emph{transversality condition}.
\end{defn}
\begin{defn}
A Morse function such that for all of its critical points $\bs{\bs p},\bs q\in\Cr$
the stable and unstable sub-manifolds intersect transversely, i.e.,
$W^{s}(\bs q)\pitchfork W^{u}(\bs p)$ is called a \emph{Morse-Smale}
function.
\end{defn}
Let us assume now that $M$ is a two-dimensional manifold and provide
a necessary and sufficient condition for a Morse function to be a
Morse-Smale function. First, for two critical points $\bs{\bs p},\bs q\in\Cr$,
the intersection $W^{s}(\bs p)\cap W^{u}(\bs q)$ may be non-empty
only for the following cases:
\begin{enumerate}
\item if $\bs p=\bs q$,
\item if $\bs p$ is a minimum and $\bs q$ is a maximum,
\item if $\bs p$ is a minimum and $\bs q$ is a saddle point,
\item if $\bs p$ is a saddle point and $\bs q$ is a maximum, or
\item if both $\bs p$ and $\bs q$ are saddle points.
\end{enumerate}
In the first four cases, it is straightforward to check that the transversality
condition is satisfied. In the last case we have that if $W^{s}(\bs p)\cap W^{u}(\bs q)\neq\emptyset$
then $W^{s}(\bs p)\cap W^{u}(\bs q)$ equals to the gradient flow
line (also Neumann line in this case) which asymptotically starts
at $\bs q$ and ends at $\bs p$. In such a case we get that for all
$\bs x\in W^{s}(\bs p)\cap W^{u}(\bs q)$, the tangent spaces obey
$T_{\bs x}W^{s}(\bs p)=T_{\bs x}W^{u}(\bs q)$ and those are one-dimensional,
so their sum cannot be equal to the two-dimensional $T_{\bs x}M$.
Therefore, in this case the transversality condition, (\ref{eq:transversality-cond})
is not satisfied and as a conclusion we get
\begin{prop}
\label{prop:Morse-Smale-on-2d}On a two-dimensional manifold, a Morse
function is Morse-Smale if and only if there is no Neumann line connecting
two saddle points.
\end{prop}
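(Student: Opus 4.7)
The plan is a direct case analysis on pairs of critical points $\bs p,\bs q\in\Cr$, since the Morse-Smale condition is the requirement $W^{s}(\bs p)\pitchfork W^{u}(\bs q)$ for every such pair. I would use the Stable/Unstable Manifold Theorem (Proposition \ref{propstuns}) to record the dimensions: if $\bs p$ is a minimum, $\lambda_{\bs p}=0$, so $W^{s}(\bs p)$ is $2$-dimensional and $W^{u}(\bs p)=\{\bs p\}$; if $\bs q$ is a maximum, $\lambda_{\bs q}=2$, so $W^{u}(\bs q)$ is $2$-dimensional and $W^{s}(\bs q)=\{\bs q\}$; if $\bs r$ is a saddle, both $W^{s}(\bs r)$ and $W^{u}(\bs r)$ are $1$-dimensional curves. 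Combined with the fact that gradient flow lines begin and end at critical points, these dimensions already restrict which pairs $(\bs p,\bs q)$ can have $W^{s}(\bs p)\cap W^{u}(\bs q)\neq\emptyset$ to exactly the five cases enumerated right before the proposition.

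Next I would dispose of the four "easy" cases. When $\bs p=\bs q$, the intersection is $\{\bs p\}$ and transversality at an isolated point is automatic because $T_{\bs p}^{s}M\oplus T_{\bs p}^{u}M=T_{\bs p}M$ by Proposition \ref{propstuns}. When one of the two critical points is an extremum, the corresponding stable or unstable set is $2$-dimensional near each intersection point, hence its tangent space is all of $T_{\bs x}M$, and (\ref{eq:transversality-cond}) trivially holds. This leaves only the saddle-saddle case to analyze.

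For the main step, assume $\bs p,\bs q\in\Sd$ and $W^{s}(\bs p)\cap W^{u}(\bs q)\neq\emptyset$, and pick $\bs x$ in this intersection. Since $W^{s}(\bs p)$ consists of points that flow forward to $\bs p$ and $W^{u}(\bs q)$ of points that flow backward to $\bs q$, the flow line through $\bs x$ is entirely contained in $W^{s}(\bs p)\cap W^{u}(\bs q)$ and is a Neumann line from $\bs q$ to $\bs p$. Both $W^{s}(\bs p)$ and $W^{u}(\bs q)$ are $1$-dimensional smooth curves, and along the flow line their tangent vectors coincide with $-\nabla f|_{\bs x}\neq 0$. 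Hence $T_{\bs x}W^{s}(\bs p)=T_{\bs x}W^{u}(\bs q)$ as $1$-dimensional subspaces of $T_{\bs x}M$, so their sum is $1$-dimensional and fails to fill the $2$-dimensional tangent space $T_{\bs x}M$. Transversality thus fails precisely when such a saddle-to-saddle Neumann line exists; this yields both directions of the equivalence.

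The only subtlety I anticipate is making sure no other route produces a saddle-to-saddle intersection (for example, that $\bs p=\bs q$ saddle-to-itself doesn't secretly contribute), but the flow-line description together with the fact that $W^{s}(\bs r)\cap W^{u}(\bs r)=\{\bs r\}$ at a common saddle reduces that sub-case back to the trivial $\bs p=\bs q$ case already handled. Everything else is a matter of bookkeeping the dimensions supplied by Proposition \ref{propstuns}.
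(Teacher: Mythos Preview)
Your proposal is correct and follows essentially the same route as the paper's own argument: a case analysis on pairs of critical points, immediate verification of transversality in the four ``easy'' cases via the dimensions from Proposition~\ref{propstuns}, and the observation that in the saddle--saddle case the tangent spaces $T_{\bs x}W^{s}(\bs p)$ and $T_{\bs x}W^{u}(\bs q)$ coincide as one-dimensional lines along the connecting flow line, so their sum cannot span $T_{\bs x}M$. Your write-up is in fact slightly more explicit than the paper (for instance, you identify the common tangent direction as $-\nabla f|_{\bs x}$ and address the $\bs p=\bs q$ saddle sub-case), but the structure and the key idea are identical.
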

By the Kupka-Smale theorem (see \cite{BanHur_MorseHomology04}) Morse-Smale
gradient vector fields are generic among the set of all vector fields.
Currently, there is no similar genericity result regarding eigenfunctions
of elliptic operators which are Morse-Smale (in the spirit of \cite{Uhl_bams72,Uhl_ajm76}).
Our preliminary numerics suggest that Morse-Smale eigenfunctions are
indeed generic.\vspace{4mm}
\bibliographystyle{siam}
\bibliography{GlobalBib}

\def\cprime{$'$}
\begin{thebibliography}{10}

\bibitem{Albert_thesis72}
{\sc J.~H. Albert}, {\em Topology of the nodal and critical point sets for
  eigenfunctions of elliptic operators}, ProQuest LLC, Ann Arbor, MI, 1972.
\newblock Thesis (Ph.D.)--Massachusetts Institute of Technology.

\bibitem{AloBan_Neumann}
{\sc L.~Alon and R.~Band}, {\em Neumann domains on quantum graphs}, In
  preparation.

\bibitem{AloBanBer_cmp18}
{\sc L.~{Alon}, R.~{Band}, and G.~{Berkolaiko}}, {\em {Nodal Statistics On
  Quantum Graphs}}, Communications in Mathematical Physics,  (2018).

\bibitem{Ban_ptrsa14}
{\sc R.~Band}, {\em The nodal count {$\{0,1,2,3,\dots\}$} implies the graph is
  a tree}, Philos. Trans. R. Soc. Lond. A, 372 (2014), pp.~20120504, 24.
\newblock preprint {\tt arXiv:1212.6710}.

\bibitem{BanBerRazSmi_cmp12}
{\sc R.~Band, G.~Berkolaiko, H.~Raz, and U.~Smilansky}, {\em The number of
  nodal domains on quantum graphs as a stability index of graph partitions},
  Commun. Math. Phys., 311 (2012), pp.~815--838.

\bibitem{BanEggTay_arXiv17}
{\sc R.~{Band}, S.~{Egger}, and A.~{Taylor}}, {\em {Ground state property of
  Neumann domains on the torus}}, ArXiv e-prints,  (2017).

\bibitem{BanFaj_ahp16}
{\sc R.~Band and D.~Fajman}, {\em Topological properties of {N}eumann domains},
  Ann. Henri Poincar\'e, 17 (2016), pp.~2379--2407.

\bibitem{BanOreSmi_pspm08}
{\sc R.~Band, I.~Oren, and U.~Smilansky}, {\em Nodal domains on graphs---how to
  count them and why?}, in Analysis on graphs and its applications, vol.~77 of
  Proc. Sympos. Pure Math., Amer. Math. Soc., Providence, RI, 2008, pp.~5--27.

\bibitem{BanShaSmi_jpa06}
{\sc R.~Band, T.~Shapira, and U.~Smilansky}, {\em Nodal domains on isospectral
  quantum graphs: the resolution of isospectrality?}, J. Phys. A, 39 (2006),
  pp.~13999--14014.

\bibitem{BanHur_MorseHomology04}
{\sc A.~Banyaga and D.~Hurtubise}, {\em Lectures on Morse Homology}, Kluwer
  Acad. Pub., 2004.

\bibitem{BerHel_jst16}
{\sc P.~B\'erard and B.~Helffer}, {\em The weak {P}leijel theorem with
  geometric control}, J. Spectr. Theory, 6 (2016), pp.~717--733.

\bibitem{Berkolaiko07}
{\sc G.~Berkolaiko}, {\em A lower bound for nodal count on discrete and metric
  graphs}, Commun. Math. Phys., 278 (2007), pp.~803--819.

\bibitem{Ber_apde13}
{\sc G.~Berkolaiko}, {\em Nodal count of graph eigenfunctions via magnetic
  perturbation}, Anal. PDE, 6 (2013), pp.~1213--1233.
\newblock preprint {\tt arXiv:1110.5373}.

\bibitem{BerKuc_graphs}
{\sc G.~Berkolaiko and P.~Kuchment}, {\em Introduction to Quantum Graphs},
  vol.~186 of Mathematical Surveys and Monographs, AMS, 2013.

\bibitem{BerKucSmi_gafa12}
{\sc G.~Berkolaiko, P.~Kuchment, and U.~Smilansky}, {\em Critical partitions
  and nodal deficiency of billiard eigenfunctions}, Geom.~Funct.~Anal., 22
  (2012), pp.~1517--1540.
\newblock preprint {\tt arXiv:1107.3489}.

\bibitem{BerRazSmi_jpa12}
{\sc G.~Berkolaiko, H.~Raz, and U.~Smilansky}, {\em Stability of nodal
  structures in graph eigenfunctions and its relation to the nodal domain
  count}, J. Phys. A, 45 (2012), p.~165203.

\bibitem{BerWey_ptrsa14}
{\sc G.~Berkolaiko and T.~Weyand}, {\em Stability of eigenvalues of quantum
  graphs with respect to magnetic perturbation and the nodal count of the
  eigenfunctions}, Philos. Trans. R. Soc. Lond. Ser. A Math. Phys. Eng. Sci.,
  372 (2014), pp.~20120522, 17.

\bibitem{Biasoti_describing_shapes08}
{\sc S.~Biasotti, L.~De~Floriani, B.~Falcidieno, P.~Frosini, D.~Giorgi,
  C.~Landi, L.~Papaleo, and M.~Spagnuolo}, {\em Describing shapes by
  geometrical-topological properties of real functions}, ACM Comput. Surv., 40
  (2008), pp.~12:1--12:87.

\bibitem{BluGnuSmi_prl02}
{\sc G.~Blum, S.~Gnutzmann, and U.~Smilansky}, {\em Nodal domains statistics: A
  criterion for quantum chaos}, Phys. Rev. Lett., 88 (2002), p.~114101.

\bibitem{BonHel_book17}
{\sc V.~Bonnaillie-No\"el and B.~Helffer}, {\em Nodal and spectral minimal
  partitions---the state of the art in 2016}, in Shape optimization and
  spectral theory, De Gruyter Open, Warsaw, 2017, pp.~353--397.

\bibitem{Bourgain_imrn15}
{\sc J.~Bourgain}, {\em On {P}leijel's nodal domain theorem}, Int. Math. Res.
  Not. IMRN,  (2015), pp.~1601--1612.

\bibitem{ChaHelHof_arXiv16}
{\sc P.~{Charron}, B.~{Helffer}, and T.~{Hoffmann-Ostenhof}}, {\em {Pleijel's
  theorem for Schr\"odinger operators with radial potentials}}, ArXiv e-prints,
   (2016).

\bibitem{ChaVegYap_jsc17}
{\sc A.~Chattopadhyay, G.~Vegter, and C.~K. Yap}, {\em Certified computation of
  planar morse-smale complexes}, Journal of Symbolic Computation, 78 (2017),
  pp.~3 -- 40.
\newblock Algorithms and Software for Computational Topology.

\bibitem{CdV_apde13}
{\sc Y.~Colin~de Verdi{\`e}re}, {\em Magnetic interpretation of the nodal
  defect on graphs}, Anal. PDE, 6 (2013), pp.~1235--1242.

\bibitem{Cou_ngwgmp23}
{\sc R.~Courant}, {\em Ein allgemeiner {S}atz zur {T}heorie der
  {E}igenfuktionen selbstadjungierter {D}ifferentialausdr\"ucke}, Nachr. Ges.
  Wiss. G\"ottingen Math Phys,  (1923), pp.~81--84.

\bibitem{DonBreGarPasHar_sig06}
{\sc S.~Dong, P.~Bremer, M.~Garland, V.~Pascucci, and J.~C. Hart}, {\em
  Spectral surface quadrangulation}, in ACM SIGGRAPH 2006 Papers, SIGGRAPH '06,
  New York, NY, USA, 2006, ACM, pp.~1057--1066.

\bibitem{EdeHarZon_dcg03}
{\sc H.~Edelsbrunner, J.~Harer, and Z.~A.}, {\em Hierarchical morse-smale
  complexes for piecewise linear 2-manifolds}, Discrete \& Computational
  Geometry, 30 (2003), pp.~87--107.

\bibitem{EdeHarNatPas_scg03}
{\sc H.~Edelsbrunner, J.~Harer, V.~Natarajan, and V.~Pascucci}, {\em
  Morse-smale complexes for piecewise linear 3-manifolds}, in Proceedings of
  the nineteenth annual symposium on Computational geometry, SCG '03, New York,
  NY, USA, 2003, ACM, pp.~361--370.

\bibitem{EGJS07}
{\sc Y.~Elon, S.~Gnutzmann, C.~Joas, and U.~Smilansky}, {\em Geometric
  characterization of nodal domains: the area-to-perimeter ratio}, J. Phys. A:
  Math. Theor., 40 (2007), p.~2689.

\bibitem{Federer_book69}
{\sc H.~Federer}, {\em Geometric measure theory}, Die Grundlehren der
  mathematischen Wissenschaften, Band 153, Springer-Verlag New York Inc., New
  York, 1969.

\bibitem{Fri_aif05}
{\sc L.~Friedlander}, {\em Extremal properties of eigenvalues for a metric
  graph}, Ann. Inst. Fourier (Grenoble), 55 (2005), pp.~199--211.

\bibitem{GhoRezSar_gafa13}
{\sc A.~Ghosh, A.~Reznikov, and P.~Sarnak}, {\em Nodal domains of maass forms
  i}, Geometric and Functional Analysis, 23 (2013), pp.~1515--1568.

\bibitem{GnuLoi_jpa13}
{\sc S.~Gnutzmann and S.~Lois}, {\em On the nodal count statistics for
  separable systems in any dimension}, J. Phys. A, 46 (2013), pp.~045201, 18.

\bibitem{GnuSmi_ap06}
{\sc S.~Gnutzmann and U.~Smilansky}, {\em Quantum graphs: Applications to
  quantum chaos and universal spectral statistics}, Adv. Phys., 55 (2006),
  pp.~527--625.

\bibitem{GyuBrePas_ieee12}
{\sc P.~V. Gyulassy~A., Bremer P.-T.}, {\em Computing morse-smale complexes
  with accurate geometry}, IEEE Trans-actions on Visualization and Computer
  Graphics, 18 (2012).

\bibitem{HelHof_jems13}
{\sc B.~Helffer and T.~Hoffmann-Ostenhof}, {\em On a magnetic characterization
  of spectral minimal partitions}, J. Eur. Math. Soc. (JEMS), 15 (2013),
  pp.~2081--2092.

\bibitem{HelHof_spde15}
{\sc B.~Helffer and T.~Hoffmann-Ostenhof}, {\em A review on large {$k$} minimal
  spectral {$k$}-partitions and {P}leijel's theorem}, in Spectral theory and
  partial differential equations, vol.~640 of Contemp. Math., Amer. Math. Soc.,
  Providence, RI, 2015, pp.~39--57.

\bibitem{HelHofTer_aihp09}
{\sc B.~Helffer, T.~Hoffmann-Ostenhof, and S.~Terracini}, {\em Nodal domains
  and spectral minimal partitions}, Ann. Inst. H. Poincar\'e Anal. Non
  Lin\'eaire, 26 (2009), pp.~101--138.

\bibitem{Huxley_latticebook}
{\sc M.~N. Huxley}, {\em Area, Lattice Points, and Exponential Sums}, Clarendon
  Press, 1996.

\bibitem{JunZel_jdg16}
{\sc J.~Jung and S.~Zelditch}, {\em Number of nodal domains and singular points
  of eigenfunctions of negatively curved surfaces with an isometric
  involution}, J. Differential Geom., 102 (2016), pp.~37--66.

\bibitem{JunZel_mann16}
\leavevmode\vrule height 2pt depth -1.6pt width 23pt, {\em Number of nodal
  domains of eigenfunctions on non-positively curved surfaces with concave
  boundary}, Math. Ann., 364 (2016), pp.~813--840.

\bibitem{Lena_arXiv16}
{\sc C.~{L{\'e}na}}, {\em {Pleijel's nodal domain theorem for Neumann and Robin
  eigenfunctions}}, ArXiv e-prints,  (2016).

\bibitem{LioPac_pams90}
{\sc P.-L. Lions and F.~Pacella}, {\em Isoperimetric inequalities for convex
  cones}, Proc. Amer. Math. Soc., 109 (1990), pp.~477--485.

\bibitem{LioPacTri_iumj88}
{\sc P.-L. Lions, F.~Pacella, and M.~Tricarico}, {\em Best constants in
  {S}obolev inequalities for functions vanishing on some part of the boundary
  and related questions}, Indiana Univ. Math. J., 37 (1988), pp.~301--324.

\bibitem{Maxwell_1870}
{\sc J.~C. Maxwell}, {\em On hills and dales}, The London, Edinburgh, and
  Dublin Philosophical Magazine and Journal of Science, 40 (1870),
  pp.~421--427.

\bibitem{McDFul_ptrs13}
{\sc R.~B. McDonald and S.~A. Fulling}, {\em Neumann nodal domains}, Philos.
  Trans. R. Soc. Lond. Ser. A Math. Phys. Eng. Sci., 372 (2014), pp.~20120505,
  6.

\bibitem{OreBan_jpa12}
{\sc I.~Oren and R.~Band}, {\em Isospectral graphs with identical nodal
  counts}, J. Phys. A, 45 (2012), p.~135203.
\newblock preprint {\tt arXiv:1110.0158}.

\bibitem{Pleijel_cpam56}
{\sc A.~{Pleijel}}, {\em {Remarks on Courant's nodal line theorem}},
  Communications on Pure and Applied Mathematics, 9 (1956), pp.~543--550.

\bibitem{Poketal_mat96}
{\sc Y.~V. Pokorny{\u\i}, V.~L. Pryadiev, and A.~Al{\cprime}-Obe{\u\i}d}, {\em
  On the oscillation of the spectrum of a boundary value problem on a graph},
  Mat. Zametki, 60 (1996), pp.~468--470.

\bibitem{Polterovich09}
{\sc I.~Polterovich}, {\em Pleijel's nodal domain theorem for free membranes},
  vol.~137 of Proc. Amer. Math. Soc., 2009.

\bibitem{Reuter_jcp10}
{\sc M.~Reuter}, {\em Hierarchical shape segmentation and registration via
  topological features of laplace-beltrami eigenfunctions}, International
  Journal of Computer Vision, 89 (2010), pp.~287--308.

\bibitem{SarWig_cm16}
{\sc P.~Sarnak and I.~Wigman}, {\em Topologies of nodal sets of random band
  limited functions}, in Advances in the theory of automorphic forms and their
  {$L$}-functions, vol.~664 of Contemp. Math., Amer. Math. Soc., Providence,
  RI, 2016, pp.~351--365.

\bibitem{Schapotschnikow06}
{\sc P.~Schapotschnikow}, {\em Eigenvalue and nodal properties on quantum graph
  trees}, Waves in Random and Complex Media, 16 (2006), pp.~167--78.

\bibitem{Steinerberger_ahp14}
{\sc S.~Steinerberger}, {\em A geometric uncertainty principle with an
  application to {P}leijel's estimate}, Ann. Henri Poincar\'e, 15 (2014),
  pp.~2299--2319.

\bibitem{Szego_jrma54}
{\sc G.~Szeg{\"o}}, {\em Inequalities for certain eigenvalues of a membrane of
  given area}, J. Rational Mech. Anal., 3 (1954), pp.~343--356.

\bibitem{python_Taylor}
{\sc A.~J. Taylor}, {\em pyneumann toolkit}.
\newblock \url{https://github.com/inclement/neumann}, 2017.

\bibitem{Uhl_bams72}
{\sc K.~Uhlenbeck}, {\em Eigenfunctions of {L}aplace operators}, Bull. Amer.
  Math. Soc., 78 (1972), pp.~1073--1076.

\bibitem{Uhl_ajm76}
\leavevmode\vrule height 2pt depth -1.6pt width 23pt, {\em Generic properties
  of eigenfunctions}, Amer. J. Math., 98 (1976), pp.~1059--1078.

\bibitem{Oberwolfach_report_12}
{\sc M.~van~den Berg, D.~Grieser, T.~Hoffmann-Ostenhof, and I.~Polterovich},
  {\em Geometric aspects of spectral theory}.
\newblock Oberwolfach Report 33/2012, July 2012.

\bibitem{Weinberger_jrma56}
{\sc H.~F. Weinberger}, {\em An isoperimetric inequality for the
  {$N$}-dimensional free membrane problem}, J. Rational Mech. Anal., 5 (1956),
  pp.~633--636.

\bibitem{Weyl_1911}
{\sc H.~Weyl}, {\em {\"U}ber die asymptotische verteilung der eigenwerte},
  Nachrichten von der Gesellschaft der Wissenschaften zu G{\"o}ttingen,
  Mathematisch-Physikalische Klasse, 1911 (1911), pp.~110--117.

\bibitem{Zel_sdg13}
{\sc S.~Zelditch}, {\em Eigenfunctions and nodal sets}, Surveys in Differential
  Geometry, 18 (2013), pp.~237--308.

\bibitem{Zelditch_jst16}
{\sc S.~Zelditch}, {\em Logarithmic lower bound on the number of nodal
  domains}, J. Spectr. Theory, 6 (2016), pp.~1047--1086.

\bibitem{zomorodian2005topology}
{\sc A.~Zomorodian}, {\em Topology for Computing}, Cambridge Monographs on
  Applied and Computational Mathematics, Cambridge University Press, 2005.

\end{thebibliography}

\end{document}